\newtheorem{theorem}{Theorem}[section]
\renewcommand{\theequation}{\thesection.\arabic{equation}}
\newtheorem{assumption}[theorem]{Assumption}
\newtheorem{case}[theorem]{Case}
\newtheorem{lemma}[theorem]{Lemma}
\newtheorem{proposition}[theorem]{Proposition}
\newtheorem{remark}[theorem]{Remark}
\numberwithin{equation}{section}
\newenvironment{proof}[1][Proof]{\noindent\textit{#1.}}{\hfill \rule{0.5em}{0.5em}}
\begin{document}

\begin{frontmatter}

\title{The spatially homogeneous Hopf bifurcation induced jointly by memory and general delays in a diffusive system}
\author[1]{Yehu Lv\corref{mycorrespondingauthor}}
\ead{mathlyh@163.com}
\address[1]{School of Mathematical Sciences, Beijing Normal University, Beijing 100875, China}
\cortext[mycorrespondingauthor]{Corresponding author.}

\begin{abstract}
In this paper, by incorporating the general delay to the reaction term in the memory-based diffusive system, we propose a diffusive system with memory delay and general delay (e.g., digestion, gestation, hunting, migration and maturation delays, etc.). We first derive an algorithm for calculating the normal form of Hopf bifurcation in the proposed system. The developed algorithm for calculating the normal form of Hopf bifurcation can be used to investigate the direction and stability of Hopf bifurcation. As a real application, we consider a diffusive predator-prey model with ratio-dependent Holling type-\uppercase\expandafter{\romannumeral3} functional response, which includes with memory and gestation delays. The Hopf bifurcation analysis without gestation delay is first studied, then the Hopf bifurcation analysis with memory and gestation delays is studied. By using the developed algorithm for calculating the normal form of Hopf bifurcation, the supercritical and stable spatially homogeneous periodic solutions induced jointly by memory and general delays are found. The stable spatially homogeneous periodic solutions are also found by the numerical simulations which confirms our analytic result.
\end{abstract}

\begin{keyword}
Memory-based diffusion, Memory delay, General delay, Hopf bifurcation, Normal form, Periodic solution

\MSC[2020] 35B10, 37G05, 37L10, 92D25
\end{keyword}

\end{frontmatter}

\linenumbers

\section{Introduction}
\label{sec:1}

In many mathematical modeling of specific disciplines, such as physics, chemistry and biology \cite{lv1,lv2,lv3}, the reaction-diffusion equations have been widely used. In general, the reaction-diffusion equations are based on the Fick's law, that is the movement flux is in the direction of negative gradient of the density distribution function \cite{lv4}. The diffusion term based on the Fick's law is usually called as the random diffusion driven by inherent mechanism.

Satulovsky et al. \cite{lv5} have proposed a stochastic lattice gas model to describe the dynamics of a predator-prey system. More precisely, the authors proposed a model which can be seen as a system consisting of two interacting particles residing in the site of a lattice. One type of particle represents a prey and the other a predator. Each site can be either empty, occupied by one prey, or occupied by one predator. Tsyganov et al. \cite{lv6} have considered a predator-prey system with cross-diffusion, and they found a new type of propagating wave in this system. The authors called it as "taxis" wave, which is entirely different from wave in a predator-prey system with self-diffusion. More precisely, they found that unlike the typical reaction-diffusion wave, which annihilate on collision, the "taxis" wave can often penetrate through each other and reflect from impermeable boundaries. McKane et al. \cite{lv7} have described the predator-prey system using an individual level model, and they focused on modeling the phenomenon of cycles. They think that the phenomenon of cycles involves concepts such as resonance. Carlos et al. \cite{lv8} have pointed out that a standard paradigm of condensed matter physics involves the interaction of discrete entities positioned on the sites of a regular lattice which can be described by a differential equation after coarse-graining when observed at a macroscopic scale, and they used a simple diffusive predator-prey model to predict that predator and prey numbers oscillate in time and space. Moreover, the diffusion-advection systems have been studied by many scholars, such as the chemotaxis model \cite{lv9,lv10,lv11,lv12,lv13}, the predator-prey model with prey-taxis \cite{lv14,lv15,lv16,lv17,lv18}, the predator-prey model with indirect prey-taxis \cite{lv19,lv20,lv21}, the competition-diffusion-advection model in the river environment \cite{lv22,lv23,lv24} and the reaction-diffusion-advection population model with delay in reaction term \cite{lv25}. However, the animal movements are different from the chemical movements, especially for highly developed animals, because they can even remember the historic distribution or clusters of the species in space. Therefore, in order to include the episodic-like spatial memory of animals, Shi et al. \cite{lv4} proposed a modified Fick's law that in addition to the negative gradient of the density distribution function at the present time, there is a directed movement toward the negative or positive gradient of the density distribution function at past time, and they proposed the following diffusive model with spatial memory
\begin{eqnarray}\left\{\begin{aligned}
&\frac{\partial u(x,t)}{\partial t}=d_{1}\Delta u(x,t)+d_{2}\left(u(x,t)u_{x}(x,t-\tau)\right)_{x}+f\left(u(x,t)\right), & x \in \Omega,~t>0, \\
&\frac{\partial u}{\partial \mathbf{n}}(x,t)=0, & x \in \partial \Omega,~t>0, \\
&u(x,t)=u_{0}(x,t), & x \in \Omega,~-\tau \leq t\leq 0,
\end{aligned}\right.\end{eqnarray}
where $u(x,t)$ is the population density at the spatial location $x$ and at time $t$, $d_{1}$ and $d_{2}$ are the Fickian diffusion coefficient and the memory-based diffusion coefficient, respectively, $\Omega \subset \mathbb{R}$ is a smooth and bounded domain, $u_{0}(x,t)$ is the initial function, $\Delta u(x,t)=\partial^{2}u(x,t)/\partial x^{2}$, $u_{x}(x,t)=\partial u(x,t)/\partial x$, $u_{x}(x,t-\tau)=\partial u(x,t-\tau)/\partial x$, $u_{xx}(x,t-\tau)=\partial^{2} u(x,t-\tau)/\partial x^{2}$, and $\mathbf{n}$ is the outward unit normal vector at the smooth boundary $\partial \Omega$. Here, the time delay $\tau>0$ represents the averaged memory period, which is usually called as the memory delay, and $f\left(u(x,t)\right)$ describes the chemical reaction or biological birth and death. Notice that such movement is based on the memory (or history) of a particular past time density distribution. However, by the stability analysis, they found that the stability of the positive constant steady state fully depends on the relationship between the diffusion coefficients $d_{1}$ and $d_{2}$, but is independent of the memory delay. In order to further investigate the influence of memory delay on the stability of the positive constant steady state, Shi et al. \cite{lv26} studied the spatial memory diffusion model with memory and maturation delays
\begin{eqnarray*}\left\{\begin{aligned}
&\frac{\partial u(x,t)}{\partial t}=d_{1}\Delta u(x,t)+d_{2}\left(u(x,t)u_{x}(x,t-\tau)\right)_{x}+f\left(u(x,t),u(x,t-\sigma\right), & x \in \Omega,~t>0, \\
&\frac{\partial u}{\partial \mathbf{n}}(x,t)=0, & x \in \partial \Omega,~t>0,
\end{aligned}\right.\end{eqnarray*}
where $\sigma>0$ is the maturation delay. They found that memory-based diffusion with memory and maturation delays can induce more complicated spatiotemporal dynamics, such as spatially homogeneous and inhomogeneous periodic solutions.

By introducing the non-local effect to the memory-based diffusive system (1.1), Song et al. \cite{lv27} proposed the single population model with memory-based diffusion and non-local interaction
\begin{eqnarray*}\left\{\begin{aligned}
&\frac{\partial u(x,t)}{\partial t}=d_{1}\Delta u(x,t)+d_{2}\left(u(x,t)u_{x}(x,t-\tau)\right)_{x}+f(u(x,t),\widehat{u}), & x \in \Omega,~t>0, \\
&\frac{\partial u}{\partial \mathbf{n}}(x,t)=0, & x \in \partial \Omega,~t>0,
\end{aligned}\right.\end{eqnarray*}
where $\Omega=(0,\ell\pi)$ with $\ell\in\mathbb{R}^{+}$, $\widehat{u}=(1/\ell\pi)\int_{0}^{\ell \pi}u(y,t)dy$. Many complicated spatiotemporal dynamics are found, such as the stable spatially homogeneous or inhomogeneous periodic solutions, homogeneous or inhomogeneous steady states, the transition from one of these solutions to another, and the coexistence of two stable spatially inhomogeneous steady states or two spatially inhomogeneous periodic solutions near the Turing-Hopf bifurcation point. Recently, for the single-species model with spatial memory, Song et al. \cite{lv28} studied the memory-based movement with spatiotemporal distributed delays in diffusion and reaction terms.

In addition, Song et al. \cite{lv29} considered the following resource-consumer model with random and memory-based diffusions
\begin{eqnarray}\left\{\begin{aligned}
&\frac{\partial u(x,t)}{\partial t}=d_{11}\Delta u(x,t)+f\left(u(x,t),v(x,t)\right), & x \in \Omega,~t>0, \\
&\frac{\partial v(x,t)}{\partial t}=d_{22}\Delta v(x,t)-d_{21}\left(v(x,t)u_{x}(x,t-\tau)\right)_{x}+g\left(u(x,t),v(x,t)\right), & x \in \Omega,~t>0, \\
&u_{x}(0,t)=u_{x}(\ell\pi,t)=v_{x}(0,t)=v_{x}(\ell\pi,t)=0, & t \geq 0, \\
&u(x,t)=u_{0}(x,t),~v(x,t)=v_{0}(x,t), & x \in \Omega,~-\tau \leq t \leq 0,
\end{aligned}\right.\end{eqnarray}
where $u(x,t)$ and $v(x,t)$ are the densities of resource and consumer, respectively, $d_{11} \geq 0$ and $d_{22} \geq 0$ are the random diffusion coefficients, $d_{21} \geq 0$ is the memory-based diffusion coefficient, $v_{0}(x,t)$ is also the initial function, and $f\left(u(x,t),v(x,t)\right)$ and $g\left(u(x,t),v(x,t)\right)$ are the reaction terms. The well-posedness of solutions is studied, and the rich dynamics of the system (1.2) with Holling type-\uppercase\expandafter{\romannumeral1} or type-\uppercase\expandafter{\romannumeral2} functional responses are found. Notice that by comparing with the classical reaction-diffusion systems with delay, the system (1.2) has the two main differences, one is that the memory delay appears in the diffusion term, another is that the diffusion term is nonlinear. Thus, the normal form for Hopf bifurcation in the classical reaction-diffusion systems is not suitable for the system (1.2). Recently, Song et al. \cite{lv30} developed an algorithm for calculating the normal form of Hopf bifurcation in the system (1.2), and they studied the direction and stability of Hopf bifurcation by using their newly developed algorithm for calculating the normal form. The existences of stable spatially inhomogeneous periodic solutions and the transition from one unstable spatially inhomogeneous periodic solution to another stable spatially inhomogeneous periodic solution are found.

Ghosh et al. \cite{lv31} have researched the reaction-cattaneo equation with fluctuating relaxation time of the diffusive flux, and they pointed out that the delay is closely related to correlated or persistent random walk. The persistence in time implies that a particle continues in its initial direction with a definite probability. Furthermore, the rich spatiotemporal patterns induced by Hopf and double Hopf bifurcations are researched. Ghosh \cite{lv32} has pointed out that the time-delayed feedback is a practical method of controlling bifurcations in reaction-diffusion systems. Furthermore, delayed feedback and its modifications are widely used to control chaos and to stabilize unstable oscillations. For the chemical reaction models, it is practical to consider the influence of the time delay caused by gene expression. The Brusselator model with gene expression delay has been studied in \cite{lv33}. For the artificial neural networks, it is practical to consider the influence of the time delay caused by leakage delay. The delay-dependent stability of neutral neural networks with leakage term delays has been studied in \cite{lv34}. For the biology model, especially for the predator-prey model, the digestion, gestation, hunting, migration and maturation delays are usually considered \cite{lv35,lv36,lv37}, and in this paper, we call these delays as the general delays. By considering that "clever" animals in a polar region usually judge footprints to decide its spatial movement, and footprints record a history of species distribution and movements, thus it is more realistic to consider the memory delay in the diffusive predator-prey model. The general delays, such as the gestation and maturation delays, are common to some animals or plants, and from this point of view, they are different from the memory delay. Furthermore, the digestion, gestation, hunting, migration and maturation periods maybe different from the average memory period, thus it is worth studying the case where the memory and the general delays are different.

By incorporating the general delay to the reaction term in the memory-based diffusive system, we propose the following diffusive system with memory and general delays
\begin{eqnarray}\left\{\begin{aligned}
&\frac{\partial u(x,t)}{\partial t}=d_{11}\Delta u(x,t)+f\left(u(x,t),v(x,t),u(x,t-\tau),v(x,t-\tau)\right),~~~~~~~~~~~~~x \in (0,\ell \pi),~t>0, \\
&\frac{\partial v(x,t)}{\partial t}=d_{22}\Delta v(x,t)-d_{21}\left(v(x,t)u_{x}(x,t-\tau)\right)_{x}+g\left(u(x,t),v(x,t),u(x,t-\tau),v(x,t-\tau)\right), \\
&~~~~~~~~~~~~~~~~~~~~~~~~~~~~~~~~~~~~~~~~~~~~~~~~~~~~~~~~~~~~~~~~~~~~~~~~~~~~~~~~~~~~~~~~~~~~~~~~~~~~x \in(0,\ell\pi),~t>0, \\
&u_{x}(0,t)=u_{x}(\ell\pi,t)=v_{x}(0,t)=v_{x}(\ell\pi,t)=0,~~~~~~~~~~~~~~~~~~~~~~~~~~~~~~~~~~~~~~~~~t \geq 0, \\
&u(x,t)=u_{0}(x,t),~v(x,t)=v_{0}(x,t),~~~~~~~~~~~~~~~~~~~~~~~~~~~~~~~~~~~~~~~~~~~~~~~~~~~~~~x \in (0,\ell\pi),~-\tau \leq t \leq 0.
\end{aligned}\right.\end{eqnarray}
At the beginning, we pointed out that Satulovsky et al. \cite{lv5} used a stochastic lattice gas model to describe the dynamics of a predator-prey system without diffusion and delay. Tsyganov et al. \cite{lv6} researched a predator-prey system with cross-diffusion without delay, the "taxis" wave which is generated by this system can often penetrate through each other and reflect from impermeable boundaries. Therefore, from the physical insight, a stochastic lattice gas model can also be used to describe the proposed model (1.3). Especially, the memory and general delays of model (1.3) can be understand as the time delay to arrive a particular location in the lattice due to the influences of external perturbations. Furthermore, from the subsequent numerical simulation, we can see that a limit cycle occurs, and our derived algorithm for calculating the normal form of Hopf bifurcation in model (1.3) can be used to determine the direction and stability of the Hopf bifurcation period solution. Therefore, the connection between the limit cycle occurs in model (1.3) and the solitary propagating wave maybe a worthwhile research area which needs to be investigated in terms of the physical subject. Once we make the connection between them, our derived algorithm for calculating the normal form of Hopf bifurcation can be used to determine the direction and stability of the solitary propagating wave.

The paper is divided into five sections. In Section 2, we derive an algorithm for calculating the normal form of Hopf bifurcation induced jointly by memory and general delays. In Section 3, we obtain the normal form of Hopf bifurcation truncated to the third-order term by using the algorithm developed in Sec.2, and we give the detail calculation process of its corresponding coefficients. In Section 4, we consider a diffusive predator-prey model with ratio-dependent Holling type-\uppercase\expandafter{\romannumeral3} functional response, which includes with memory and gestation delays. Then we give the detail Hopf bifurcation analysis for two cases, i.e., with memory delay and without gestation delay, and with memory and gestation delays. Furthermore, we study the direction and stability of Hopf bifurcation corresponding to the above two cases. Finally, we give a brief conclusion and discussion in Section 5.

\section{Algorithm for calculating the normal form of Hopf bifurcation induced jointly by memory and general delays}
\label{sec:2}

\subsection{Characteristic equation at the positive constant steady state}

Define the real-valued Sobolev space
\begin{eqnarray*}
X:=\left\{(u,v)^{T} \in \left(W^{2,2}(0,\ell\pi)\right)^{2}:\frac{\partial u}{\partial x}=\frac{\partial v}{\partial x}=0 \text { at } x=0,\ell \pi\right\}
\end{eqnarray*}
with the inner product defined by
\begin{eqnarray*}
\left[U_{1},U_{2}\right]=\int_{0}^{\ell\pi} U_{1}^{T}U_{2}~dx \text { for } U_{1}=\left(u_{1},v_{1}\right)^{T} \in X \text{ and } U_{2}=\left(u_{2},v_{2}\right)^{T} \in X,
\end{eqnarray*}
where the symbol $T$ represents the transpose of vector, and let $\mathcal{C}:=C([-1,0];X)$ be the Banach space of continuous mappings from $[-1,0]$ to $X$ with the sup norm. It is well known that the eigenvalue problem
\begin{eqnarray*}\left\{\begin{aligned}
&\widetilde{\varphi}^{\prime \prime}(x)=\widetilde{\lambda}\widetilde{\varphi}(x),~x \in(0,\ell\pi), \\
&\widetilde{\varphi}^{\prime}(0)=\widetilde{\varphi}^{\prime}(\ell\pi)=0
\end{aligned}\right.\end{eqnarray*}
has eigenvalues $\widetilde{\lambda}_{n}=-n^{2}/\ell^{2}$ with corresponding normalized eigenfunctions
\begin{eqnarray}
\beta_{n}^{(j)}=\gamma_{n}(x)e_{j},~\gamma_{n}(x)=\frac{\cos(nx/\ell)}{\left\|\cos(nx/\ell)\right\|_{L^{2}}}=\left\{\begin{aligned}
&\frac{1}{\sqrt{\ell\pi}}, & n=0, \\
&\sqrt{\frac{2}{\ell\pi}}\cos\left(\frac{nx}{\ell}\right), & n \geq 1,
\end{aligned}\right.
\end{eqnarray}
where $e_{j},~j=1,2$ is the unit coordinate vector of $\mathbb{R}^{2}$, and $n \in \mathbb{N}_{0}=\mathbb{N}\cup \left\{0\right\}$ is often called wave number, $\mathbb{N}_{0}$ is the set of all non-negative integers, $\mathbb{N}=\left\{1,2,...\right\}$ represents the set of all positive integers.

Without loss of generality, we assume that $E_{*}\left(u_{*},v_{*}\right)$ is the positive constant steady state of system (1.3). The linearized equation of (1.3) at $E_{*}\left(u_{*},v_{*}\right)$ is
\begin{eqnarray}
\left(\begin{aligned}
&\frac{\partial u(x,t)}{\partial t} \\
&\frac{\partial v(x,t)}{\partial t}
\end{aligned}\right)=D_{1}\left(\begin{aligned}
&\Delta u(x,t) \\
&\Delta v(x,t)
\end{aligned}\right)+D_{2}\left(\begin{aligned}
&\Delta u(x,t-\tau) \\
&\Delta v(x,t-\tau)
\end{aligned}\right)+A_{1}\left(\begin{aligned}
&u(x,t) \\
&v(x,t)
\end{aligned}\right)+A_{2}\left(\begin{aligned}
&u(x,t-\tau) \\
&v(x,t-\tau)
\end{aligned}\right),
\end{eqnarray}
where
\begin{eqnarray}
D_{1}=\left(\begin{array}{cc}
d_{11} & 0 \\
0 & d_{22}
\end{array}\right),~D_{2}=\left(\begin{array}{cc}
0 & 0 \\
-d_{21} v_{*} & 0
\end{array}\right),~A_{1}=\left(\begin{array}{cc}
a_{11} & a_{12} \\
a_{21} & a_{22}
\end{array}\right),~A_{2}=\left(\begin{array}{cc}
b_{11} & b_{12} \\
b_{21} & b_{22}
\end{array}\right)
\end{eqnarray}
and
\begin{eqnarray}\begin{aligned}
a_{11}&=\frac{\partial f\left(u_{*},v_{*}\right)}{\partial u(x,t)},~a_{12}=\frac{\partial f\left(u_{*},v_{*}\right)}{\partial v(x,t)},~a_{21}=\frac{\partial g\left(u_{*},v_{*}\right)}{\partial u(x,t)},~a_{22}=\frac{\partial g\left(u_{*},v_{*}\right)}{\partial v(x,t)}, \\
b_{11}&=\frac{\partial f\left(u_{*},v_{*}\right)}{\partial u(x,t-\tau)},~b_{12}=\frac{\partial f\left(u_{*},v_{*}\right)}{\partial v(x,t-\tau)},~b_{21}=\frac{\partial g\left(u_{*},v_{*}\right)}{\partial u(x,t-\tau)},~b_{22}=\frac{\partial g\left(u_{*},v_{*}\right)}{\partial v(x,t-\tau)}.
\end{aligned}\end{eqnarray}

Therefore, the characteristic equation of system (2.2) is
\begin{eqnarray*}
\prod_{n \in \mathbb{N}_{0}}\Gamma_{n}(\lambda)=0,
\end{eqnarray*}
where $\Gamma_{n}(\lambda)=\det\left(M_{n}(\lambda)\right)$ with
\begin{eqnarray}
M_{n}(\lambda)=\lambda I_{2}+\frac{n^{2}}{\ell^{2}} D_{1}+\frac{n^{2}}{\ell^{2}}e^{-\lambda \tau} D_{2}-A_{1}-A_{2}e^{-\lambda \tau}.
\end{eqnarray}
Here, $\det(.)$ represents the determinant of a matrix, $I_{2}$ is the identity matrix of $2 \times 2$, and $D_{1}, D_{2}, A_{1}, A_{2}$ are defined by (2.3). Then we obtain
\begin{eqnarray}
\Gamma_{n}(\lambda)=\det\left(M_{n}(\lambda)\right)=\lambda^{2}-T_{n}\lambda+\widetilde{J}_{n}(\tau)=0,
\end{eqnarray}
where
\begin{eqnarray}\begin{aligned}
T_{n}&=(a_{11}+a_{22})-(d_{11}+d_{22})\frac{n^{2}}{\ell^{2}}, \\
\widetilde{J}_{n}(\tau)&=d_{11}d_{22}\frac{n^{4}}{\ell^{4}}-\left(d_{11}a_{22}+d_{22}a_{11}+(d_{11}b_{22}+d_{22}b_{11}+d_{21}a_{12}v_{*})e^{-\lambda\tau}+d_{21}b_{12}v_{*}e^{-2\lambda\tau}\right)\frac{n^{2}}{\ell^{2}} \\
&+(a_{11}b_{22}+a_{22}b_{11}-a_{12}b_{21}-a_{21}b_{12})e^{-\lambda\tau}-(b_{11}+b_{22})\lambda e^{-\lambda\tau}+(b_{11}b_{22}-b_{12}b_{21})e^{-2\lambda\tau} \\
&+a_{11}a_{22}-a_{12}a_{21}.
\end{aligned}\end{eqnarray}

\subsection{Basic assumption and equation transformation}

\begin{assumption}
Assume that at $\tau=\tau_{c}$, (2.6) has a pair of purely imaginary roots $\pm i \omega_{n_{c}}$ with $\omega_{n_{c}}>0$ for $n=n_{c} \in \mathbb{N}_{0}$ and all other eigenvalues have negative real part. Let $\lambda(\tau)=\alpha_{1}(\tau) \pm i \alpha_{2}(\tau)$ be a pair of roots of (2.6) near $\tau=\tau_{c}$ satisfying $\alpha_{1}(\tau_{c})=0$ and $\alpha_{2}(\tau_{c})=\omega_{n_{c}}$. In addition, the corresponding transversality condition holds.
\end{assumption}

Let $\tau=\tau_{c}+\mu,~|\mu| \ll 1$ such that $\mu=0$ corresponds to the Hopf bifurcation value for system (1.3). Moreover, we shift $E_{*}(u_{*},v_{*})$ to the origin by setting
\begin{eqnarray*}
U(x,t)=\left(U_{1}(x,t),U_{2}(x,t)\right)^{T}=\left(u(x,t),v(x,t)\right)^{T}-\left(u_{*},v_{*}\right)^{T},
\end{eqnarray*}
and normalize the delay by rescaling the time variable $t \rightarrow t/\tau$. Furthermore, we rewrite $U(t)$ for $U(x,t)$, and $U_{t} \in \mathcal{C}$ for $U_{t}(\theta)=U(x,t+\theta),~-1 \leq \theta \leq 0$. Then the system (1.3) becomes the compact form
\begin{eqnarray}
\frac{dU(t)}{dt}=d(\mu)\Delta(U_{t})+L(\mu)(U_{t})+F(U_{t},\mu),
\end{eqnarray}
where for $\varphi=\left(\varphi^{(1)},\varphi^{(2)}\right)^{T} \in \mathcal{C}$, $d(\mu)\Delta$ is given by
\begin{eqnarray*}
d(\mu)\Delta(\varphi)=d_{0}\Delta(\varphi)+F^{d}(\varphi,\mu)
\end{eqnarray*}
with
\begin{eqnarray}\begin{aligned}
d_{0}\Delta(\varphi)&=\tau_{c}D_{1}\Delta\varphi(0)+\tau_{c}D_{2}\Delta\varphi(-1), \\
F^{d}(\varphi,\mu)&=-d_{21}(\tau_{c}+\mu)\left(\begin{array}{c}
0 \\
\varphi^{(1)}_{x}(-1)\varphi^{(2)}_{x}(0)+\varphi^{(1)}_{xx}(-1)\varphi^{(2)}(0)
\end{array}\right) \\
&+\mu\left(\begin{array}{c}
d_{11}\varphi^{(1)}_{xx}(0) \\
-d_{21}v_{*}\varphi^{(1)}_{xx}(-1)+d_{22}\varphi^{(2)}_{xx}(0)
\end{array}\right).
\end{aligned}\end{eqnarray}
Furthermore, $L(\mu): \mathcal{C} \rightarrow X$ is given by
\begin{eqnarray}
L(\mu)(\varphi)=\left(\tau_{c}+\mu\right)\left(A_{1}\varphi(0)+A_{2}\varphi(-1)\right),
\end{eqnarray}
and $F: \mathcal{C} \times \mathbb{R}^{2} \rightarrow X$ is given by
\begin{eqnarray}
F(\varphi,\mu)=\left(\tau_{c}+\mu\right)\left(\begin{aligned}
f\left(\varphi^{(1)}(0)+u_{*},\varphi^{(2)}(0)+v_{*},\varphi^{(1)}(-1)+u_{*},\varphi^{(2)}(-1)+v_{*}\right) \\
g\left(\varphi^{(1)}(0)+u_{*},\varphi^{(2)}(0)+v_{*},\varphi^{(1)}(-1)+u_{*},\varphi^{(2)}(-1)+v_{*}\right)
\end{aligned}\right)-L(\mu)(\varphi).
\end{eqnarray}

In what follows, we assume that $F(\varphi,\mu)$ is $C^{k}(k \geq 3)$ function, which is smooth with respect to $\varphi$ and $\mu$. Notice that $\mu$ is the perturbation parameter and is treated as a variable in the calculation of normal form. Moreover, from (2.10), if we denote $L_{0}(\varphi)=\tau_{c} \left(A_{1} \varphi(0)+A_{2}\varphi(-1)\right)$, then (2.8) can be rewritten as
\begin{eqnarray}
\frac{dU(t)}{dt}=d_{0}\Delta(U_{t})+L_{0}(U_{t})+\widetilde{F}\left(U_{t},\mu\right),
\end{eqnarray}
where the linear and nonlinear terms are separated, and
\begin{eqnarray}
\widetilde{F}(\varphi,\mu)=\mu\left(A_{1}\varphi(0)+A_{2}\varphi(-1)\right)+F(\varphi,\mu)+F^{d}(\varphi,\mu).
\end{eqnarray}

Thus, the linearized equation of (2.12) can be written as
\begin{eqnarray}
\frac{dU(t)}{dt}=d_{0}\Delta(U_{t})+L_{0}(U_{t}).
\end{eqnarray}
Moreover, the characteristic equation for the linearized equation (2.14) is
\begin{eqnarray}
\prod_{n \in \mathbb{N}_{0}}\widetilde{\Gamma}_{n}(\lambda)=0,
\end{eqnarray}
where $\widetilde{\Gamma}_{n}(\lambda)=\operatorname{det}\left(\widetilde{M}_{n}(\lambda)\right)$ with
\begin{eqnarray}
\widetilde{M}_{n}(\lambda)=\lambda I_{2}+\tau_{c}\frac{n^{2}}{\ell^{2}}D_{1}+\tau_{c}\frac{n^{2}}{\ell^{2}}e^{-\lambda}D_{2}-\tau_{c} A_{1}-\tau_{c}A_{2}e^{-\lambda}.
\end{eqnarray}

By comparing (2.16) with (2.5), we know that (2.15) has a pair of purely imaginary roots $\pm i \omega_{c}$ for $n=n_{c} \in \mathbb{N}_{0}$, and all other eigenvalues have negative real parts, where $\omega_{c}=\tau_{c}\omega_{n_{c}}$. In order to write (2.12) as an abstract ordinary differential equation in a Banach space, follows by \cite{lv38}, we can take the enlarged space
\begin{eqnarray*}
\mathcal{BC}:=\left\{\widetilde{\psi}:[-1,0]\rightarrow X:\widetilde{\psi} \text{ is continuous on } [-1,0), \exists \lim _{\theta \rightarrow 0^{-}} \widetilde{\psi}(\theta)\in X\right\},
\end{eqnarray*}
then the equation (2.12) is equivalent to an abstract ordinary differential equation on $\mathcal{BC}$
\begin{eqnarray*}
\frac{dU_{t}}{dt}=\widetilde{A}U_{t}+X_{0}\widetilde{F}\left(U_{t},\mu\right).
\end{eqnarray*}
Here, $\widetilde{A}$ is a operator from $\mathcal{C}_{0}^{1}=\{\varphi \in \mathcal{C}:\dot{\varphi} \in \mathcal{C},\varphi(0) \in \operatorname{dom}(\Delta)\}$ to $\mathcal{BC}$, which is defined by
\begin{eqnarray*}
\widetilde{A}\varphi=\dot{\varphi}+X_{0}\left(\tau_{c}D_{1}\Delta\varphi(0)+\tau_{c}D_{2}\Delta\varphi(-1)+L_{0}(\varphi)-\dot{\varphi}(0)\right),
\end{eqnarray*}
and $X_{0}=X_{0}(\theta)$ is given by
\begin{eqnarray*}
X_{0}(\theta)=\left\{\begin{aligned}
& 0, & -1 \leq \theta<0, \\
& 1, & \theta=0.
\end{aligned}\right.
\end{eqnarray*}

In the following, the method given in \cite{lv38} is used to complete the decomposition of $\mathcal{BC}$. Let $C:=C\left([-1,0],\mathbb{R}^{2}\right),~C^{*}:=C\left([0,1],\mathbb{R}^{2*}\right)$, where $\mathbb{R}^{2*}$ is the two-dimensional space of row vectors, and define the adjoint bilinear form on $C^{*}\times C$ as follows
\begin{eqnarray*}
\langle\Psi(s),\Phi(\theta)\rangle=\Psi(0)\Phi(0)-\int_{-1}^{0}\int_{0}^{\theta}\Psi(\xi-\theta)dM_{n}(\theta)\Phi(\xi)d\xi
\end{eqnarray*}
for $\Psi \in C^{*},~\Phi \in C$ and $\xi \in [-1,0]$, where $M_{n}(\theta)$ is a bounded variation function from $[-1,0]$ to $\mathbb{R}^{2 \times 2}$, i.e., $M_{n}(\theta)\in BV\left([-1,0];\mathbb{R}^{2 \times 2}\right)$, such that for $\Phi(\theta)\in\mathcal{C}$, one has
\begin{eqnarray*}
-\tau_{c}\frac{n^{2}}{\ell^{2}}D_{1}\Phi(0)-\tau_{c}\frac{n^{2}}{\ell^{2}}D_{2}\Phi(-1)+L_{0}(\Phi(\theta))=\int_{-1}^{0}dM_{n}(\theta)\Phi(\theta).
\end{eqnarray*}

By choosing
\begin{eqnarray*}
\Phi(\theta)=(\phi(\theta),\overline{\phi}(\theta)),~\Psi(s)=\operatorname{col}\left(\psi^{T}(s),\overline{\psi}^{T}(s)\right),
\end{eqnarray*}
where the $\operatorname{col}(.)$ represents the column vector, $\phi(\theta)=\operatorname{col}\left(\phi_{1}(\theta),\phi_{2}(\theta)\right)=\phi e^{i \omega_{c} \theta}\in \mathbb{C}^{2}$ with $\phi=\operatorname{col}\left(\phi_{1},\phi_{2}\right)$ is the eigenvector of (2.14) associated with the eigenvalue $i \omega_{c}$, and $\psi(s)=\operatorname{col}\left(\psi_{1}(s),\psi_{2}(s)\right)=\psi e^{-i\omega_{c}s}\in \mathbb{C}^{2}$ with $\psi=\operatorname{col}\left(\psi_{1},\psi_{2}\right)$ is the corresponding adjoint eigenvector such that
\begin{eqnarray*}
\left\langle\Psi(s), \Phi(\theta)\right\rangle=I_{2},
\end{eqnarray*}
where
\begin{eqnarray*}
\phi=\left(\begin{array}{c}
1 \\
\frac{i\omega_{n_{c}}+d_{11}\left(n_{c}/\ell\right)^{2}-a_{11}-b_{11}e^{-i \omega_{c}}}{a_{12}+b_{12}e^{-i \omega_{c}}}
\end{array}\right),~\psi=\eta\left(\begin{array}{c}
1 \\
\frac{a_{12}+b_{12}e^{-i \omega_{c}}}{i\omega_{n_{c}}+d_{22}\left(n_{c}/\ell\right)^{2}-a_{22}-b_{22}e^{-i \omega_{c}}}
\end{array}\right).
\end{eqnarray*}
Here,
\begin{eqnarray*}
\eta=\frac{1}{1+k_{1}k_{2}+e^{-i\omega_{c}}\tau_{c}b_{11}+e^{-i\omega_{c}}k_{2}\left(\tau_{c}b_{21}+\tau_{c}d_{21}v_{*}\left(n_{c}/\ell\right)^{2}\right)}
\end{eqnarray*}
with
\begin{eqnarray*}
k_{1}=\frac{i\omega_{n_{c}}+d_{11}\left(n_{c}/\ell\right)^{2}-a_{11}-b_{11}e^{-i\omega_{c}}}{a_{12}+b_{12}e^{-i\omega_{c}}},~k_{2}=\frac{a_{12}+b_{12}e^{-i\omega_{c}}}{i\omega_{n_{c}}+d_{22}\left(n_{c}/\ell\right)^{2}-a_{22}-b_{22}e^{-i\omega_{c}}}.
\end{eqnarray*}

According to \cite{lv38}, the phase space $\mathcal{C}$ can be decomposed as
\begin{eqnarray*}
\mathcal{C}=\mathcal{P} \oplus \mathcal{Q},~\mathcal{P}=\operatorname{Im} \pi,~\mathcal{Q}=\operatorname{Ker} \pi,
\end{eqnarray*}
where for $\widetilde{\phi} \in \mathcal{C}$, the projection $\pi: \mathcal{C} \rightarrow \mathcal{P}$ is defined by
\begin{eqnarray}
\pi(\widetilde{\phi})=\left(\Phi\left\langle\Psi,\left(\begin{aligned}
&\left[\widetilde{\phi}(\cdot),\beta_{n_{c}}^{(1)}\right] \\
&\left[\widetilde{\phi}(\cdot),\beta_{n_{c}}^{(2)}\right]
\end{aligned}\right)\right\rangle\right)^{T} \beta_{n_{c}}.
\end{eqnarray}

Therefore, according to the method given in \cite{lv38}, $\mathcal{BC}$ can be divided into a direct sum of center subspace and its complementary space, that is
\begin{eqnarray}
\mathcal{BC}=\mathcal{P} \oplus \operatorname{ker} \pi,
\end{eqnarray}
where $\operatorname{dim}\mathcal{P}=2$. It is easy to see that the projection $\pi$ which is defined by (2.17), is extended to a continuous projection (which is still denoted by $\pi$), that is, $\pi:\mathcal{BC}\mapsto \mathcal{P}$. In particular, for $\alpha \in \mathcal{C}$, we have
\begin{eqnarray}
\pi\left(X_{0}(\theta)\alpha\right)=\left(\Phi(\theta)\Psi(0)\left(\begin{aligned}
&\left[\alpha,\beta_{n_{c}}^{(1)}\right] \\
&\left[\alpha,\beta_{n_{c}}^{(2)}\right]
\end{aligned}\right)\right)^{T} \beta_{n_{c}}.
\end{eqnarray}

By combining with (2.18) and (2.19), $U_{t}(\theta)$ can be decomposed as
\begin{eqnarray}\begin{aligned}
U_{t}(\theta)&=\left(\Phi(\theta)\left(\begin{array}{c}
z_{1} \\
z_{2}
\end{array}\right)\right)^{T}\left(\begin{array}{c}
\beta_{n_{c}}^{(1)} \\
\beta_{n_{c}}^{(2)}
\end{array}\right)+w \\
&=\left(z_{1}\phi e^{i\omega_{c}\theta}+z_{2}\overline{\phi}e^{-i\omega_{c}\theta}\right)\gamma_{n_{c}}(x)+w \\
&=\left(\phi(\theta)~~\overline{\phi}(\theta)\right)\left(\begin{array}{c}
z_{1}\gamma_{n_{c}}(x) \\
z_{2}\gamma_{n_{c}}(x)
\end{array}\right)+\left(\begin{array}{c}
w_{1} \\
w_{2}
\end{array}\right),
\end{aligned}\end{eqnarray}
where $w=\operatorname{col}\left(w_{1},w_{2}\right)$ and
\begin{eqnarray*}\begin{aligned}
&\left(\begin{array}{c}
z_{1} \\
z_{2}
\end{array}\right)=\left\langle\Psi(0),\left(\begin{aligned}
&\left[U_{t}(\theta),\beta_{n_{c}}^{(1)}\right] \\
&\left[U_{t}(\theta),\beta_{n_{c}}^{(2)}\right]
\end{aligned}\right)\right\rangle.
\end{aligned}\end{eqnarray*}

If we assume that
\begin{eqnarray*}
\Phi(\theta)=\left(\phi(\theta),\overline{\phi}(\theta)\right),~z_{x}=\left(z_{1}\gamma_{n_{c}}(x),z_{2}\gamma_{n_{c}}(x)\right)^{T},
\end{eqnarray*}
then (2.20) can be rewritten as
\begin{eqnarray}
U_{t}(\theta)=\Phi(\theta)z_{x}+w \text{ with } w \in \mathcal{C}_{0}^{1} \cap \operatorname{Ker} \pi:=\mathcal{Q}^{1}.
\end{eqnarray}

Then by combining with (2.21), the system (2.12) is decomposed as a system of abstract ordinary differential equations (ODEs) on $\mathbb{R}^{2} \times \text{Ker}~\pi$, with finite and infinite dimensional variables are separated in the linear term. That is
\begin{eqnarray}
\left\{\begin{aligned}
&\dot{z}=Bz+\Psi(0)\left(\begin{aligned}
&\left[\widetilde{F}\left(\Phi(\theta)z_{x}+w,\mu\right),\beta_{n_{c}}^{(1)}\right] \\
&\left[\widetilde{F}\left(\Phi(\theta)z_{x}+w,\mu\right),\beta_{n_{c}}^{(2)}\right]
\end{aligned}\right), \\
&\dot{w}=A_{\mathcal{Q}^{1}}w+(I-\pi)X_{0}(\theta)\widetilde{F}\left(\Phi(\theta)z_{x}+w,\mu\right),
\end{aligned}\right.
\end{eqnarray}
where $I$ is the identity matrix, $z=\left(z_{1},z_{2}\right)^{T}$, $B=\text{diag}\left\{i\omega_{c},-i\omega_{c}\right\}$ is the diagonal matrix, and $A_{\mathcal{Q}^{1}}:\mathcal{Q}^{1} \rightarrow \operatorname{Ker}\pi$ is defined by
\begin{eqnarray*}
A_{\mathcal{Q}^{1}}w=\dot{w}+X_{0}(\theta)\left(\tau_{c}D_{1}\Delta w(0)+\tau_{c}D_{2}\Delta w(-1)+L_{0}(w)-\dot{w}(0)\right).
\end{eqnarray*}

Consider the formal Taylor expansion
\begin{eqnarray*}
\widetilde{F}(\varphi,\mu)=\sum_{j \geq 2} \frac{1}{j!}\widetilde{F}_{j}(\varphi,\mu),~F(\varphi,\mu)=\sum_{j \geq 2}\frac{1}{j!} F_{j}(\varphi,\mu),~F^{d}(\varphi,\mu)=\sum_{j \geq 2}\frac{1}{j!}F_{j}^{d}(\varphi,\mu).
\end{eqnarray*}
From (2.13), we have
\begin{eqnarray}
\widetilde{F}_{2}(\varphi,\mu)=2\mu\left(A_{1}\varphi(0)+A_{2}\varphi(-1)\right)+F_{2}(\varphi,\mu)+F_{2}^{d}(\varphi,\mu)
\end{eqnarray}
and
\begin{eqnarray}
\widetilde{F}_{j}(\varphi,\mu)=F_{j}(\varphi,\mu)+F_{j}^{d}(\varphi,\mu),~j=3,4,\cdots.
\end{eqnarray}

By combining with (2.19), the system (2.22) can be rewritten as
\begin{eqnarray*}
\left\{\begin{aligned}
&\dot{z}=Bz+\sum_{j \geq 2}\frac{1}{j!}f_{j}^{1}(z,w,\mu), \\
&\dot{w}=A_{\mathcal{Q}^{1}}w+\sum_{j \geq 2}\frac{1}{j!}f_{j}^{2}(z,w,\mu),
\end{aligned}\right.
\end{eqnarray*}
where
\begin{eqnarray}\begin{aligned}
&f_{j}^{1}(z,w,\mu)=\Psi(0)\left(\begin{aligned}
&\left[\widetilde{F}_{j}\left(\Phi(\theta)z_{x}+w,\mu\right),\beta_{n_{c}}^{(1)}\right] \\
&\left[\widetilde{F}_{j}\left(\Phi(\theta)z_{x}+w,\mu\right),\beta_{n_{c}}^{(2)}\right]
\end{aligned}\right), \\
&f_{j}^{2}(z,w,\mu)=(I-\pi)X_{0}(\theta)\widetilde{F}_{j}\left(\Phi(\theta)z_{x}+w,\mu\right).
\end{aligned}\end{eqnarray}

In terms of the normal form theory of partial functional differential equations \cite{lv38}, after a recursive transformation of variables of the form
\begin{eqnarray}
(z,w)=(\widetilde{z},\widetilde{w})+\frac{1}{j!}\left(U_{j}^{1}(\widetilde{z},\mu),U_{j}^{2}(\widetilde{z},\mu)\right),~j \geq 2,
\end{eqnarray}
where $z, \widetilde{z} \in \mathbb{R}^{2}$, $w, \widetilde{w} \in \mathcal{Q}^{1}$ and $U_{j}^{1}:\mathbb{R}^{3} \rightarrow \mathbb{R}^{2}$, $U_{j}^{2}: \mathbb{R}^{3} \rightarrow \mathcal{Q}^{1}$ are homogeneous polynomials of degree $j$ in $\widetilde{z}$ and $\mu$, a locally center manifold for (2.12) satisfies $w=0$ and the flow on it is given by the two-dimensional ODEs
\begin{eqnarray*}
\dot{z}=B z+\sum_{j \geq 2} \frac{1}{j!} g_{j}^{1}(z,0,\mu),
\end{eqnarray*}
which is the normal form as in the usual sense for ODEs.

By following \cite{lv38} and \cite{lv39}, we have
\begin{eqnarray}
g_{2}^{1}(z,0,\mu)=\operatorname{Proj}_{\operatorname{Ker}\left(M_{2}^{1}\right)}f_{2}^{1}(z,0,\mu)
\end{eqnarray}
and
\begin{eqnarray}
g_{3}^{1}(z,0,\mu)=\operatorname{Proj}_{\operatorname{Ker}\left(M_{3}^{1}\right)}\widetilde{f}_{3}^{1}(z,0,\mu)=\operatorname{Proj}_{S} \widetilde{f}_{3}^{1}(z,0,0)+O\left(\mu^{2}|z|\right),
\end{eqnarray}
where $\operatorname{Proj}_{p}(q)$ represents the projection of $q$ on $p$, and $\widetilde{f}_{3}^{1}(z,0,\mu)$ is vector and its element is the cubic polynomial of $(z, \mu)$ after the variable transformation of (2.26), and it can be determined by (2.38),
\begin{eqnarray}\begin{aligned}
&\operatorname{Ker}\left(M_{2}^{1}\right)=\operatorname{Span}\left\{\left(\begin{array}{c}
\mu z_{1} \\
0
\end{array}\right),\left(\begin{array}{c}
0 \\
\mu z_{2}
\end{array}\right)\right\}, \\
&\operatorname{Ker}\left(M_{3}^{1}\right)=\operatorname{Span}\left\{\left(\begin{array}{c}
z_{1}^{2}z_{2} \\
0
\end{array}\right),\left(\begin{array}{c}
\mu^{2}z_{1} \\
0
\end{array}\right),\left(\begin{array}{c}
0 \\
z_{1}z_{2}^{2}
\end{array}\right),\left(\begin{array}{c}
0 \\
\mu^{2}z_{2}
\end{array}\right)\right\},
\end{aligned}\end{eqnarray}
and
\begin{eqnarray}
S=\operatorname{Span}\left\{\left(\begin{array}{c}
z_{1}^{2}z_{2} \\
0
\end{array}\right),\left(\begin{array}{c}
0 \\
z_{1}z_{2}^{2}
\end{array}\right)\right\}.
\end{eqnarray}

In the following, for notational convenience, we let
\begin{eqnarray*}
\mathcal{H}\left(\alpha z_{1}^{q_{1}}z_{2}^{q_{2}}\mu\right)=\left(\begin{array}{c}
\alpha z_{1}^{q_{1}}z_{2}^{q_{2}}\mu \\
\overline{\alpha}z_{1}^{q_{2}}z_{2}^{q_{1}}\mu
\end{array}\right),~\alpha \in \mathbb{C}.
\end{eqnarray*}

We then calculate $g_{j}^{1}(z,0,\mu),~j=2,3$ step by step.

\subsection{Algorithm for calculating the normal form of Hopf bifurcation}

\subsubsection{Calculation of $g_{2}^{1}(z,0,\mu)$}

From the second mathematical expression in (2.9), we have
\begin{eqnarray}
F_{2}^{d}(\varphi,\mu)=F_{20}^{d}(\varphi)+\mu F_{21}^{d}(\varphi)
\end{eqnarray}
and
\begin{eqnarray}
F_{3}^{d}(\varphi,\mu)=\mu F_{31}^{d}(\varphi),~F_{j}^{d}(\varphi,\mu)=(0,0)^{T},~j=4,5,\cdots,
\end{eqnarray}
where
\begin{eqnarray}
\left\{\begin{aligned}
F_{20}^{d}(\varphi)&=-2d_{21}\tau_{c}\left(\begin{array}{c}
0 \\
\varphi^{(1)}_{x}(-1)\varphi^{(2)}_{x}(0)+\varphi^{(1)}_{xx}(-1)\varphi^{(2)}(0)
\end{array}\right), \\
F_{21}^{d}(\varphi)&=2D_{1}\Delta\varphi(0)+2D_{2}\Delta\varphi(-1), \\
F_{31}^{d}(\varphi)&=-6d_{21}\left(\begin{array}{c}
0 \\
\varphi^{(1)}_{x}(-1)\varphi^{(2)}_{x}(0)+\varphi^{(1)}_{xx}(-1)\varphi^{(2)}(0)
\end{array}\right).
\end{aligned}\right.
\end{eqnarray}

Furthermore, it is easy to verify that
\begin{eqnarray}\begin{aligned}
&\left(\begin{aligned}
&\left[2 \mu \left(A_{1}(\Phi(0)z_{x})+A_{2}(\Phi(-1)z_{x})\right),\beta_{n_{c}}^{(1)}\right] \\
&\left[2 \mu \left(A_{1}(\Phi(0)z_{x})+A_{2}(\Phi(-1)z_{x})\right),\beta_{n_{c}}^{(2)}\right]
\end{aligned}\right)=2\mu A_{1}\left(\Phi(0)\left(\begin{array}{c}
z_{1} \\
z_{2}
\end{array}\right)\right)+2\mu A_{2}\left(\Phi(-1)\left(\begin{array}{c}
z_{1} \\
z_{2}
\end{array}\right)\right), \\
&\left(\begin{aligned}
&\left[\mu F_{21}^{d}\left(\Phi(\theta)z_{x}\right),\beta_{n_{c}}^{(1)}\right] \\
&\left[\mu F_{21}^{d}\left(\Phi(\theta)z_{x}\right),\beta_{n_{c}}^{(2)}\right]
\end{aligned}\right)=-2\frac{n_{c}^{2}}{\ell^{2}}\mu\left(D_{1}\left(\Phi(0)\left(\begin{array}{c}
z_{1} \\
z_{2}
\end{array}\right)\right)+D_{2}\left(\Phi(-1)\left(\begin{array}{c}
z_{1} \\
z_{2}
\end{array}\right)\right)\right).
\end{aligned}\end{eqnarray}

From (2.11), we have for all $\mu \in \mathbb{R}$, $F_{2}\left(\Phi(\theta)z_{x},\mu\right)=F_{2}\left(\Phi(\theta)z_{x},0\right)$. It follows from the first mathematical expression in (2.25) that
\begin{eqnarray*}
f_{2}^{1}(z, 0, \mu)=\Psi(0)\left(\begin{aligned}
&\left[\widetilde{F}_{2}\left(\Phi(\theta)z_{x},\mu\right),\beta_{n_{c}}^{(1)}\right] \\
&\left[\widetilde{F}_{2}\left(\Phi(\theta)z_{x},\mu\right),\beta_{n_{c}}^{(2)}\right]
\end{aligned}\right).
\end{eqnarray*}

This, together with (2.23), (2.27), (2.29), (2.31), (2.32), (2.33) and (2.34), yields to
\begin{eqnarray}
g_{2}^{1}(z,0,\mu)=\operatorname{Proj}_{\operatorname{Ker}\left(M_{2}^{1}\right)}f_{2}^{1}(z,0,\mu)=\mathcal{H}\left(B_{1}\mu z_{1}\right),
\end{eqnarray}
where
\begin{eqnarray}
B_{1}=\left\{\begin{aligned}
&2\psi^{T}(0)\left(A_{1}\phi(0)+A_{2}\phi(-1)-\frac{n_{c}^{2}}{\ell^{2}}\left(D_{1}\phi(0)+D_{2}\phi(-1)\right)\right), &n_{c}\in\mathbb{N}, \\
&2\psi^{T}(0)\left(A_{1}\phi(0)+A_{2}\phi(-1)\right), &n_{c}=0.
\end{aligned}\right.
\end{eqnarray}

\subsubsection{Calculation of $g_{3}^{1}(z,0,\mu)$}

Notice that the calculation of $g_{3}^{1}(z,0,\mu)$ is very similar to that in \cite{lv30}. Here, we simply give the results. In this subsection, we calculate the third term $g_{3}^{1}(z,0,0)$ in terms of (2.28).

Denote
\begin{eqnarray}\begin{aligned}
&f_{2}^{(1,1)}(z,w,0)=\Psi(0)\left(\begin{aligned}
&\left[F_{2}\left(\Phi(\theta)z_{x}+w,0\right),\beta_{n_{c}}^{(1)}\right] \\
&\left[F_{2}\left(\Phi(\theta)z_{x}+w,0\right),\beta_{n_{c}}^{(2)}\right]
\end{aligned}\right), \\
&f_{2}^{(1,2)}(z,w,0)=\Psi(0)\left(\begin{aligned}
&\left[F_{2}^{d}\left(\Phi(\theta)z_{x}+w,0\right),\beta_{n_{c}}^{(1)}\right] \\
&\left[F_{2}^{d}\left(\Phi(\theta)z_{x}+w,0\right),\beta_{n_{c}}^{(2)}\right]
\end{aligned}\right).
\end{aligned}\end{eqnarray}

It follows from (2.35) that $g_{2}^{1}(z,0,0)=(0,0)^{T}$. Then $\widetilde{f}_{3}^{1}(z,0,0)$ is determined by
\begin{eqnarray}\begin{aligned}
\widetilde{f}_{3}^{1}(z,0,0)&=f_{3}^{1}(z,0,0)+\frac{3}{2}\left((D_{z}f_{2}^{1}(z,0,0))U_{2}^{1}(z,0)+(D_{w}f_{2}^{(1,1)}(z,0,0))U_{2}^{2}(z,0)(\theta)\right. \\
&~~~~~~~~~~~~~~~~~\left.+(D_{w,w_{x},w_{xx}}f_{2}^{(1,2)}(z,0,0))U_{2}^{(2,d)}(z,0)(\theta)\right),
\end{aligned}\end{eqnarray}
where $f_{2}^{1}(z,0,0)=f_{2}^{(1,1)}(z,0,0)+f_{2}^{(1,2)}(z,0,0)$,
\begin{eqnarray}\begin{aligned}
D_{w,w_{x},w_{xx}}f_{2}^{(1,2)}(z,0,0)&=\left(D_{w}f_{2}^{(1,2)}(z,0,0),D_{w_{x}}f_{2}^{(1,2)}(z,0,0),D_{w_{xx}}f_{2}^{(1,2)}(z,0,0)\right), \\
U_{2}^{1}(z,0)&=\left(M_{2}^{1}\right)^{-1}\operatorname{Proj}_{\operatorname{Im}\left(M_{2}^{1}\right)}f_{2}^{1}(z,0,0),~ U_{2}^{2}(z,0)(\theta)=\left(M_{2}^{2}\right)^{-1}f_{2}^{2}(z,0,0),
\end{aligned}\end{eqnarray}
and
\begin{eqnarray}
U_{2}^{(2,d)}(z,0)(\theta)=\operatorname{col}\left(U_{2}^{2}(z,0)(\theta),U_{2 x}^{2}(z,0)(\theta),U_{2xx}^{2}(z,0)(\theta)\right).
\end{eqnarray}

We calculate $\operatorname{Proj}_{S}\widetilde{f}_{3}^{1}(z,0,0)$ in the following four steps.
\par~~~

\noindent {\bf{Step 1: Calculation of $\operatorname{Proj}_{S} f_{3}^{1}(z,0,0)$}}

Writing $F_{3}\left(\Phi(\theta)z_{x},0\right)$ as follows
\begin{eqnarray}\begin{aligned}
&F_{3}\left(\Phi(\theta)z_{x},0\right)=\sum_{q_{1}+q_{2}=3}A_{q_{1}q_{2}}z_{1}^{q_{1}}z_{2}^{q_{2}}\gamma_{n_{c}}^{3}(x),
\end{aligned}\end{eqnarray}
where $A_{q_{1}q_{2}}=\overline{A}_{q_{2}q_{1}}$ with $q_{1}, q_{2} \in \mathbb{N}_{0}$. From (2.24) and (2.32), we have $\widetilde{F}_{3}\left(\Phi(\theta) z_{x}, 0\right)=F_{3}\left(\Phi(\theta) z_{x}, 0\right)$, and by noticing that
\begin{eqnarray*}
\int_{0}^{\ell\pi}\gamma_{n_{c}}^{4}(x)dx=\left\{\begin{aligned}
&\frac{3}{2\ell\pi}, & n_{c}\in\mathbb{N}, \\
&\frac{1}{\ell\pi}, & n_{c}=0,
\end{aligned}\right.
\end{eqnarray*}
we have
\begin{eqnarray*}
\operatorname{Proj}_{S}f_{3}^{1}(z,0,0)=\mathcal{H}\left(B_{21}z_{1}^{2}z_{2}\right),
\end{eqnarray*}
where
\begin{eqnarray}
B_{21}=\left\{\begin{aligned}
&\frac{3}{2\ell\pi}\psi^{T}A_{21}, & n_{c}\in\mathbb{N}, \\
&\frac{1}{\ell\pi}\psi^{T}A_{21}, & n_{c}=0.
\end{aligned}\right.\end{eqnarray}
\par~~~

\noindent {\bf{Step 2: Calculation of $\operatorname{Proj}_{S}\left((D_{z}f_{2}^{1}(z,0,0))U_{2}^{1}(z,0)\right)$}}

Form (2.23) and (2.31), we have
\begin{eqnarray}
\widetilde{F}_{2}\left(\Phi(\theta)z_{x},0\right)=F_{2}\left(\Phi(\theta)z_{x},0\right)+F_{20}^{d}\left(\Phi(\theta) z_{x}\right).
\end{eqnarray}

By (2.11), we write
\begin{eqnarray}\begin{aligned}
&F_{2}\left(\Phi(\theta)z_{x}+w,\mu\right)=F_{2}\left(\Phi(\theta)z_{x}+w,0\right) \\
&=\sum_{q_{1}+q_{2}=2}A_{q_{1}q_{2}}z_{1}^{q_{1}}z_{2}^{q_{2}}\gamma_{n_{c}}^{2}(x)+S_{2}\left(\Phi(\theta)z_{x},w\right)+O\left(|w|^{2}\right),
\end{aligned}\end{eqnarray}
where $S_{2}\left(\Phi(\theta)z_{x},w\right)$ is the product term of $\Phi(\theta)z_{x}$ and $w$.

By (2.31) and (2.33), we write
\begin{eqnarray}
F_{2}^{d}\left(\Phi(\theta)z_{x},0\right)=F_{20}^{d}\left(\Phi(\theta)z_{x}\right)=\frac{n_{c}^{2}}{\ell^{2}}\sum_{q_{1}+q_{2}=2}A_{q_{1}q_{2}}^{d} z_{1}^{q_{1}}z_{2}^{q_{2}}\left(\xi_{n_{c}}^{2}(x)-\gamma_{n_{c}}^{2}(x)\right),
\end{eqnarray}
where $\xi_{n_{c}}(x)=(\sqrt{2}/\sqrt{\ell \pi})\sin \left((n_{c}/\ell)x\right)$, and
\begin{eqnarray}\begin{aligned}
&\left\{\begin{array}{l}
A_{20}^{d}=-2d_{21}\tau_{c}\left(\begin{array}{c}
0 \\
\phi_{1}(-1)\phi_{2}(0)
\end{array}\right)=\overline{A_{02}^{d}}, \\
A_{11}^{d}=-4d_{21}\tau_{c}\left(\begin{array}{c}
0 \\
\operatorname{Re}\left\{\phi_{1}(-1)\overline{\phi_{2}}(0)\right\}
\end{array}\right).
\end{array}\right.
\end{aligned}\end{eqnarray}

From (2.1), it is easy to verify that
\begin{eqnarray*}
\int_{0}^{\ell\pi}\gamma_{n_{c}}^{3}(x)dx=\left\{\begin{aligned}
&0, & n_{c}\in\mathbb{N}, \\
&\frac{1}{\sqrt{\ell\pi}}, & n_{c}=0,
\end{aligned}\right.,~\int_{0}^{\ell\pi}\xi_{n_{c}}^{2}(x)\gamma_{n_{c}}(x)dx=0.
\end{eqnarray*}

Then from (2.43), (2.44) and (2.45), we have
\begin{eqnarray}\begin{aligned}
&f_{2}^{1}(z,0,0)=\Psi(0)\left(\begin{aligned}
&\left[\widetilde{F}_{2}\left(\Phi(\theta)z_{x},0\right),\beta_{n_{c}}^{(1)}\right] \\
&\left[\widetilde{F}_{2}\left(\Phi(\theta)z_{x},0\right),\beta_{n_{c}}^{(2)}\right]
\end{aligned}\right) \\
&=\left\{\begin{aligned}
&(0,0)^{T}, & n_{c}\in\mathbb{N}, \\
&\frac{1}{\sqrt{\ell\pi}}\mathcal{H}(\psi^{T}(A_{20}z_{1}^{2}+A_{02}z_{2}^{2}+A_{11}z_{1}z_{2})), & n_{c}=0.
\end{aligned}\right.
\end{aligned}\end{eqnarray}

Hence, by combining with (2.30) and (2.47), we have
\begin{eqnarray*}\begin{aligned}
\operatorname{Proj}_{S}\left((D_{z}f_{2}^{1}(z,0,0))U_{2}^{1}(z,0)\right)=\mathcal{H}(B_{22}z_{1}^{2}z_{2}),
\end{aligned}\end{eqnarray*}
where
\begin{eqnarray}
B_{22}=\left\{\begin{aligned}
&0, & n_{c}\in\mathbb{N}, \\
&\frac{1}{i\omega_{c}\ell\pi}(-(\psi^{T}A_{20})(\psi^{T}A_{11})+|\psi^{T}A_{11}|^{2}+\frac{2}{3}|\psi^{T}A_{02}|^{2}), &n_{c}=0.
\end{aligned}\right.
\end{eqnarray}
\par~~~

\noindent {\bf{Step 3: Calculation of $\operatorname{Proj}_{S}\left((D_{w}f_{2}^{(1,1)}(z,0,0))U_{2}^{2}(z,0)(\theta)\right)$}}

Let
\begin{eqnarray*}
U_{2}^{2}(z,0)(\theta)\triangleq h(\theta,z)=\sum_{n \in \mathbb{N}_{0}}h_{n}(\theta,z)\gamma_{n}(x),
\end{eqnarray*}
where $h_{n}(\theta,z)=\sum_{q_{1}+q_{2}=2}h_{n,q_{1}q_{2}}(\theta)z_{1}^{q_{1}}z_{2}^{q_{2}}$. Then we have
\begin{eqnarray*}\begin{aligned}
&\left(\begin{aligned}
\left[S_{2}\left(\Phi(\theta)z_{x},\sum_{n \in \mathbb{N}_{0}} h_{n}(\theta,z)\gamma_{n}(x)\right),\beta_{n_{c}}^{(1)}\right] \\
\left[S_{2}\left(\Phi(\theta)z_{x},\sum_{n \in \mathbb{N}_{0}} h_{n}(\theta,z)\gamma_{n}(x)\right),\beta_{n_{c}}^{(2)}\right]
\end{aligned}\right) \\
&=\sum_{n \in \mathbb{N}_{0}} b_{n}\left(S_{2}\left(\phi(\theta)z_{1},h_{n}(\theta,z)\right)+S_{2}\left(\overline{\phi}(\theta)z_{2},h_{n}(\theta,z)\right)\right),
\end{aligned}\end{eqnarray*}
where
\begin{eqnarray*}
b_{n}=\left\{\begin{aligned}
&\int_{0}^{\ell\pi}\gamma_{n_{c}}^{2}(x)\gamma_{n}(x) dx=\left\{\begin{array}{cc}
\frac{1}{\sqrt{\ell\pi}}, & n=0, \\
\frac{1}{\sqrt{2\ell\pi}}, & n=2n_{c}, \\
0, & \text { otherwise },
\end{array}\right.~n_{c}\in\mathbb{N}, \\
&\int_{0}^{\ell\pi}\gamma_{n_{c}}^{2}(x)\gamma_{n}(x) dx=\left\{\begin{array}{cc}
\frac{1}{\sqrt{\ell\pi}}, & n=0, \\
0, & n\neq 0,
\end{array}\right.~n_{c}=0.
\end{aligned}\right.\end{eqnarray*}

Hence, we have
\begin{eqnarray*}\begin{aligned}
&(D_{w}f_{2}^{(1,1)}(z,0,0))U_{2}^{2}(z,0)(\theta) \\
&=\left\{\begin{aligned}
&\Psi(0)\left(\sum_{n=0,2n_{c}}b_{n}\left(S_{2}\left(\phi(\theta)z_{1},h_{n}(\theta,z)\right)+S_{2}\left(\overline{\phi}(\theta)z_{2},h_{n}(\theta,z)\right)\right)\right), &n_{c}\in\mathbb{N} \\
&\Psi(0)b_{0}\left(S_{2}\left(\phi(\theta)z_{1},h_{0}(\theta,z)\right)+S_{2}\left(\overline{\phi}(\theta)z_{2},h_{0}(\theta,z)\right)\right), &n_{c}=0,
\end{aligned}\right.
\end{aligned}\end{eqnarray*}
and
\begin{eqnarray*}
\operatorname{Proj}_{S}\left((D_{w}f_{2}^{(1,1)}(z,0,0))U_{2}^{2}(z,0)(\theta)\right)=\mathcal{H}\left(B_{22}z_{1}^{2}z_{2}\right),
\end{eqnarray*}
where
\begin{eqnarray}
B_{23}=\left\{\begin{aligned}
&\frac{1}{\sqrt{\ell\pi}}\psi^{T}\left(S_{2}\left(\phi(\theta),h_{0,11}(\theta)\right)+S_{2}\left(\overline{\phi}(\theta), h_{0,20}(\theta)\right)\right) \\
&+\frac{1}{\sqrt{2\ell\pi}}\psi^{T}\left(S_{2}\left(\phi(\theta),h_{2n_{c},11}(\theta)\right)+S_{2}\left(\overline{\phi}(\theta),h_{2n_{c},20}(\theta)\right)\right), ~n_{c}\in\mathbb{N}, \\
&\frac{1}{\sqrt{\ell\pi}}\psi^{T}\left(S_{2}\left(\phi(\theta),h_{0,11}(\theta)\right)+S_{2}\left(\overline{\phi}(\theta), h_{0,20}(\theta)\right)\right),~n_{c}=0.
\end{aligned}\right.
\end{eqnarray}
\par~~~

\noindent {\bf{Step 4: Calculation of $\operatorname{Proj}_{S}\left((D_{w,w_{x},w_{xx}}f_{2}^{(1,2)}(z,0,0))U_{2}^{(2,d)}(z,0)(\theta)\right)$}}

Denote $\varphi(\theta)=\left(\varphi_{1}(\theta),\varphi_{2}(\theta)\right)^{T}=\Phi(\theta)z_{x}$ and
\begin{eqnarray*}\begin{aligned}
&F_{2}^{d}\left(\varphi(\theta),w,w_{x},w_{xx}\right)=F_{2}^{d}(\varphi(\theta)+w,0)=F_{20}^{d}(\varphi(\theta)+w) \\
&=-2d_{21}\tau_{c}\left(\begin{array}{c}
0 \\
\left(\varphi^{(1)}_{xx}(-1)+(w_{1})_{xx}(-1)\right)\left(\varphi^{(2)}(0)+w_{2}(0)\right)
\end{array}\right) \\
&-2d_{21}\tau_{c}\left(\begin{array}{c}
0 \\
\left(\varphi^{(1)}_{x}(-1)+(w_{1})_{x}(-1)\right)\left(\varphi^{(2)}_{x}(0)+(w_{2})_{x}(0)\right)
\end{array}\right).
\end{aligned}\end{eqnarray*}

Furthermore, from (2.37), (2.39) and (2.40), we have
\begin{eqnarray*}\begin{aligned}
&(D_{w,w_{x},w_{xx}}f_{2}^{(1,2)}(z,0,0))U_{2}^{(2,d)}(z,0)(\theta) \\
&=\Psi(0)\left(\begin{aligned}
&\left[D_{w,w_{x},w_{xx}}F_{2}^{d}\left(\varphi(\theta),w,w_{x},w_{xx}\right)U_{2}^{(2,d)}(z,0)(\theta),\beta_{n_{c}}^{(1)}\right] \\
&\left[D_{w,w_{x},w_{xx}}F_{2}^{d}\left(\varphi(\theta),w,w_{x},w_{xx}\right)U_{2}^{(2,d)}(z,0)(\theta),\beta_{n_{c}}^{(2)}\right]
\end{aligned}\right),
\end{aligned}\end{eqnarray*}
and then we obtain
\begin{eqnarray*}
\operatorname{Proj}_{S}\left((D_{w,w_{x},w_{xx}}f_{2}^{(1,2)}(z,0,0))U_{2}^{(2,d)}(z,0)(\theta)\right)=\mathcal{H}\left(B_{23}z_{1}^{2}z_{2}\right),
\end{eqnarray*}
where
\begin{eqnarray}
B_{24}=\left\{\begin{aligned}
&-\frac{1}{\sqrt{\ell \pi}}(n_{c}/\ell)^{2}\psi^{T}\left(S_{2}^{(d,1)}\left(\phi(\theta),h_{0,11}(\theta)\right)+S_{2}^{(d,1)}\left(\overline{\phi}(\theta), h_{0,20}(\theta)\right)\right) \\
&+\frac{1}{\sqrt{2\ell\pi}}\psi^{T}\sum_{j=1,2,3}b_{2n_{c}}^{(j)}\left(S_{2}^{(d,j)}\left(\phi(\theta),h_{2n_{c},11}(\theta)\right)+S_{2}^{(d,j)}\left(\overline{\phi}(\theta), h_{2n_{c},20}(\theta)\right)\right), & n_{c}\in\mathbb{N}, \\
& 0, & n_{c}=0
\end{aligned}\right.
\end{eqnarray}
with
\begin{eqnarray*}
b_{2 n_{c}}^{(1)}=-\frac{n_{c}^{2}}{\ell^{2}},~b_{2n_{c}}^{(2)}=2\frac{n_{c}^{2}}{\ell^{2}},~b_{2n_{c}}^{(3)}=-\frac{(2n_{c})^{2}}{\ell^{2}}.
\end{eqnarray*}

Furthermore, for $\phi(\theta)=\left(\phi_{1}(\theta),\phi_{2}(\theta)\right)^{T},~y(\theta)=\left(y_{1}(\theta),y_{2}(\theta)\right)^{T} \in C\left([-1,0], \mathbb{R}^{2}\right)$, we have
\begin{eqnarray*}
\left\{\begin{aligned}
&S_{2}^{(d,1)}(\phi(\theta),y(\theta))=-2d_{21}\tau_{c}\left(\begin{array}{c}
0 \\
\phi_{1}(-1)y_{2}(0)
\end{array}\right), \\
&S_{2}^{(d,2)}(\phi(\theta),y(\theta))=-2d_{21}\tau_{c}\left(\begin{array}{c}
0 \\
\phi_{2}(0)y_{1}(-1)+\phi_{1}(-1)y_{2}(0)
\end{array}\right), \\
&S_{2}^{(d,3)}(\phi(\theta),y(\theta))=-2d_{21}\tau_{c}\left(\begin{array}{c}
0 \\
\phi_{2}(0)y_{1}(-1)
\end{array}\right).
\end{aligned}\right.
\end{eqnarray*}

\section{Normal form of the Hopf bifurcation and the corresponding coefficients}
\label{sec:3}

According to the algorithm developed in Section 2, we obtain the normal form of the Hopf bifurcation truncated to the third-order term
\begin{eqnarray}
\dot{z}=Bz+\frac{1}{2}\left(\begin{array}{c}
B_{1}z_{1} \mu \\
\overline{B}_{1}z_{2}\mu
\end{array}\right)+\frac{1}{3!}\left(\begin{array}{c}
B_{2}z_{1}^{2}z_{2} \\
\overline{B}_{2}z_{1}z_{2}^{2}
\end{array}\right)+O\left(|z| \mu^{2}+\left|z\right|^{4}\right),
\end{eqnarray}
where
\begin{eqnarray*}\begin{aligned}
B_{1}&=\left\{\begin{aligned}
&2\psi^{T}(0)\left(A_{1}\phi(0)+A_{2}\phi(-1)-\frac{n_{c}^{2}}{\ell^{2}}\left(D_{1}\phi(0)+D_{2}\phi(-1)\right)\right), &n_{c}\in\mathbb{N}, \\
&2\psi^{T}(0)\left(A_{1}\phi(0)+A_{2}\phi(-1)\right), &n_{c}=0,
\end{aligned}\right. \\
B_{2}&=B_{21}+\frac{3}{2}\left(B_{22}+B_{23}+B_{24}\right).
\end{aligned}\end{eqnarray*}
Here, $B_{1}$ is determined by (2.36), $B_{21}$, $B_{22}$ and $B_{23}$ are determined by (2.42), (2.48), (2.49), (2.50), respectively, and they can be calculated by using the MATLAB software. The normal form (3.1) can be written in real coordinates through the change of variables $z_{1}=v_{1}-iv_{2},~z_{2}=v_{1}+iv_{2}$, and then changing to polar coordinates by $v_{1}=\rho \cos \Theta,~v_{2}=\rho \sin \Theta$, where $\Theta$ is the azimuthal angle. Therefore, by the above transformation and removing the azimuthal term $\Theta$, (3.1) can be rewritten as
\begin{eqnarray*}
\dot{\rho}=K_{1}\mu\rho+K_{2}\rho^{3}+O\left(\mu^{2}\rho+\lvert(\rho,\mu)\rvert^{4}\right),
\end{eqnarray*}
where
\begin{eqnarray*}
K_{1}=\frac{1}{2}\operatorname{Re}\left(B_{1}\right),~K_{2}=\frac{1}{3!}\operatorname{Re}\left(B_{2}\right).
\end{eqnarray*}

According to \cite{lv40}, the sign of $K_{1}K_{2}$ determines the direction of the Hopf bifurcation, and the sign of $K_{2}$ determines the stability of the Hopf bifurcation periodic solution. More precisely, we have the following results

(i) when $K_{1}K_{2}<0$, the Hopf bifurcation is supercritical, and the Hopf bifurcation periodic solution is stable for $K_{2}<0$ and unstable for $K_{2}>0$;

(ii) when $K_{1}K_{2}>0$, the Hopf bifurcation is subcritical, and the Hopf bifurcation periodic solution is stable for $K_{2}<0$ and unstable for $K_{2}>0$.

From (2.42), (2.48), (2.49) and (2.50), it is obvious that in order to obtain the value of $K_{2}$, we still need to calculate $h_{0,20}(\theta), h_{0,11}(\theta), h_{2 n_{c},20}(\theta), h_{2 n_{c},11}(\theta)$ and $A_{ij}$.

\subsection{Calculations of $h_{0,20}(\theta), h_{0,11}(\theta), h_{2n_{c},20}(\theta)$ and $h_{2n_{c},11}(\theta)$}

From \cite{lv38}, we have
\begin{eqnarray*}
M_{2}^{2}\left(h_{n}(\theta,z) \gamma_{n}(x)\right)=D_{z}\left(h_{n}(\theta,z) \gamma_{n}(x)\right)Bz-A_{\mathcal{Q}^{1}}\left(h_{n}(\theta,z)\gamma_{n}(x)\right),
\end{eqnarray*}
which leads to
\begin{eqnarray}\begin{aligned}
&\left(\begin{aligned}
\left[M_{2}^{2}\left(h_{n}(\theta,z)\gamma_{n}(x)\right),\beta_{n}^{(1)}\right] \\
\left[M_{2}^{2}\left(h_{n}(\theta,z)\gamma_{n}(x)\right),\beta_{n}^{(2)}\right]
\end{aligned}\right) \\
&=2i\omega_{c}\left(h_{n,20}(\theta)z_{1}^{2}-h_{n,02}(\theta)z_{2}^{2}\right)-\left(\dot{h}_{n}(\theta,z)+X_{0}(\theta)\left(\mathscr{L}_{0}\left(h_{n}(\theta,z)\right)-\dot{h}_{n}(0,z)\right)\right),
\end{aligned}\end{eqnarray}
where
\begin{eqnarray*}
\mathscr{L}_{0}\left(h_{n}(\theta,z)\right)=-\tau_{c}(n/\ell)^{2}\left(D_{1}h_{n}(0,z)+D_{2}h_{n}(-1,z)\right)+\tau_{c}(A_{1}h_{n}(0,z)+A_{2}h_{n}(-1,z)).
\end{eqnarray*}

By (2.19) and the second mathematical expression in (2.25), we have
\begin{eqnarray}\begin{aligned}
f_{2}^{2}(z,0,0)&=X_{0}(\theta)\widetilde{F}_{2}\left(\Phi(\theta)z_{x},0\right)-\pi\left(X_{0}(\theta)\widetilde{F}_{2}\left(\Phi(\theta)z_{x},0\right)\right) \\
&=X_{0}(\theta)\widetilde{F}_{2}\left(\Phi(\theta)z_{x},0\right)-\Phi(\theta)\Psi(0)\left(\begin{aligned}
\left[\widetilde{F}_{2}\left(\Phi(\theta)z_{x},0\right),\beta_{n_{c}}^{(1)}\right] \\
\left[\widetilde{F}_{2}\left(\Phi(\theta)z_{x},0\right),\beta_{n_{c}}^{(2)}\right]
\end{aligned}\right)\gamma_{n_{c}}(x).
\end{aligned}\end{eqnarray}

Furthermore, by (2.43), (2.44) and (2.45), when $n_{c}\in\mathbb{N}$, we have
\begin{eqnarray}
\left(\begin{aligned}
\left[f_{2}^{2}(z,0,0),\beta_{n}^{(1)}\right] \\
\left[f_{2}^{2}(z,0,0),\beta_{n}^{(2)}\right]
\end{aligned}\right)=\left\{\begin{aligned}
& \frac{1}{\sqrt{\ell\pi}}X_{0}(\theta)\left(A_{20}z_{1}^{2}+A_{02}z_{2}^{2}+A_{11}z_{1}z_{2}\right), & n=0, \\
& \frac{1}{\sqrt{2\ell\pi}}X_{0}(\theta)\left(\widetilde{A}_{20}z_{1}^{2}+\widetilde{A}_{02}z_{2}^{2}+\widetilde{A}_{11}z_{1}z_{2}\right), & n=2n_{c},
\end{aligned}\right.
\end{eqnarray}
where $\widetilde{A}_{j_{1}j_{2}}$ is defined as follows
\begin{eqnarray}
\left\{\begin{aligned}
&\widetilde{A}_{j_{1}j_{2}}=A_{j_{1}j_{2}}-2\left(n_{c}/\ell\right)^{2}A_{j_{1}j_{2}}^{d}, \\
&j_{1},j_{2}=0,1,2,~j_{1}+j_{2}=2,
\end{aligned}\right.
\end{eqnarray}
where $A_{j_{1}j_{2}}^{d}$ is determined by (2.46), and $A_{j_{1}j_{2}}$ will be calculated in the following section. When $n_{c}=0$, we have
\begin{eqnarray}
\left(\begin{aligned}
\left[f_{2}^{2}(z,0,0),\beta_{n}^{(1)}\right] \\
\left[f_{2}^{2}(z,0,0),\beta_{n}^{(2)}\right]
\end{aligned}\right)=\frac{1}{\sqrt{\ell\pi}}\left(X_{0}(\theta)-\Phi(\theta)\Psi(0)\right)\left(A_{20}z_{1}^{2}+A_{02}z_{2}^{2}+A_{11}z_{1}z_{2}\right),~n=0.
\end{eqnarray}

Therefore, from (3.2), (3.3), (3.4), (3.6), and by matching the coefficients of $z_{1}^{2}$ and $z_{1}z_{2}$, when $n_{c}\in\mathbb{N}$, we have
\begin{eqnarray}
n=0,~\left\{\begin{array}{l}
z_{1}^{2}: \left\{\begin{aligned}
&\dot{h}_{0,20}(\theta)-2i\omega_{c}h_{0,20}(\theta)=(0,0)^{T}, \\
&\dot{h}_{0,20}(0)-L_{0}\left(h_{0,20}(\theta)\right)=\frac{1}{\sqrt{\ell\pi}}A_{20},
\end{aligned}\right. \\
z_{1} z_{2}: \left\{\begin{aligned}
&\dot{h}_{0,11}(\theta)=(0,0)^{T}, \\
&\dot{h}_{0,11}(0)-L_{0}\left(h_{0,11}(\theta)\right)=\frac{1}{\sqrt{\ell\pi}}A_{11}
\end{aligned}\right.
\end{array}\right.
\end{eqnarray}
and
\begin{eqnarray}
n=2n_{c},~\left\{\begin{array}{l}
z_{1}^{2}: \left\{\begin{aligned}
&\dot{h}_{2n_{c},20}(\theta)-2i\omega_{c}h_{2n_{c},20}(\theta)=(0,0)^{T}, \\
&\dot{h}_{2n_{c},20}(0)-\mathscr{L}_{0}\left(h_{2n_{c},20}(\theta)\right)=\frac{1}{\sqrt{2\ell\pi}}\widetilde{A}_{20},
\end{aligned}\right. \\
z_{1} z_{2}: \left\{\begin{aligned}
&\dot{h}_{2n_{c},11}(\theta)=(0,0)^{T}, \\
&\dot{h}_{2n_{c},11}(0)-\mathscr{L}_{0}\left(h_{2n_{c},11}(\theta)\right)=\frac{1}{\sqrt{2\ell\pi}}\widetilde{A}_{11}.
\end{aligned}\right. \\
\end{array}\right.
\end{eqnarray}

When $n_{c}=0$, we have
\begin{eqnarray*}
n=0,~\left\{\begin{array}{l}
z_{1}^{2}: \left\{\begin{aligned}
&\dot{h}_{0,20}(\theta)-2i\omega_{c}h_{0,20}(\theta)=\frac{1}{\sqrt{\ell\pi}}\Phi(\theta)\Psi(0)A_{20}, \\
&\dot{h}_{0,20}(0)-L_{0}\left(h_{0,20}(\theta)\right)=\frac{1}{\sqrt{\ell\pi}}A_{20},
\end{aligned}\right. \\
z_{1} z_{2}: \left\{\begin{aligned}
&\dot{h}_{0,11}(\theta)=\frac{1}{\sqrt{\ell\pi}}\Phi(\theta)\Psi(0)A_{11}, \\
&\dot{h}_{0,11}(0)-L_{0}\left(h_{0,11}(\theta)\right)=\frac{1}{\sqrt{\ell\pi}}A_{11}.
\end{aligned}\right.
\end{array}\right.
\end{eqnarray*}

Next, by combining with (3.7) and (3.8), we will give the mathematical expressions of $h_{2n_{c},20}(\theta)$ and $h_{2n_{c},11}(\theta)$ for $n_{c}\in\mathbb{N}$, and the mathematical expressions of $h_{0,20}(\theta)$ and $h_{0,11}(\theta)$ for $n_{c}\in\mathbb{N}$ and $n_{c}=0$, respectively.

\par~~~
\par\noindent (1) Calculations of $h_{0,20}(\theta)$ and $ h_{0,11}(\theta)$ for $n_{c}\in\mathbb{N}$

(i) Notice that
\begin{eqnarray}
\left\{\begin{aligned}
\dot{h}_{0,20}(\theta)-2i\omega_{c}h_{0,20}(\theta)&=(0,0)^{T}, \\
\dot{h}_{0,20}(0)-L_{0}\left(h_{0,20}(\theta)\right)&=\frac{1}{\sqrt{\ell\pi}}A_{20},
\end{aligned}\right.
\end{eqnarray}
then from (3.9), we have $h_{0,20}(\theta)=e^{2i\omega_{c}\theta}h_{0,20}(0)$, and hence $h_{0,20}(-1)=e^{-2i\omega_{c}}h_{0,20}(0)$. Furthermore, from (3.9) and $L_{0}\left(h_{0,20}(\theta)\right)=\tau_{c}(A_{1} h_{0,20}(0)+A_{2}h_{0,20}(-1))$, we have
\begin{eqnarray}
2i\omega_{c}h_{0,20}(0)=\frac{1}{\sqrt{\ell\pi}}A_{20}+\tau_{c}(A_{1}h_{0,20}(0)+A_{2}h_{0,20}(-1)).
\end{eqnarray}

Therefore, by combining with $h_{0,20}(-1)=e^{-2i\omega_{c}}h_{0,20}(0)$ and (3.10), we can obtain
\begin{eqnarray*}
(2i\omega_{c}I_{2}-\tau_{c} A_{1}-\tau_{c}A_{2}e^{-2i\omega_{c}})h_{0,20}(0)=\frac{1}{\sqrt{\ell \pi}}A_{20},
\end{eqnarray*}
and hence $h_{0,20}(\theta)=e^{2i\omega_{c}\theta}C_{1}$ with
\begin{eqnarray*}\begin{aligned}
C_{1}&=(2i\omega_{c}I_{2}-\tau_{c} A_{1}-\tau_{c}A_{2}e^{-2i\omega_{c}})^{-1}\frac{1}{\sqrt{\ell\pi}}A_{20}.
\end{aligned}\end{eqnarray*}

(ii) Notice that
\begin{eqnarray}
\left\{\begin{aligned}
&\dot{h}_{0,11}(\theta)=(0,0)^{T}, \\
&\dot{h}_{0,11}(0)-L_{0}\left(h_{0,11}(\theta)\right)=\frac{1}{\sqrt{\ell\pi}}A_{11},
\end{aligned}\right.
\end{eqnarray}
then from (3.11), we have $h_{0,11}(\theta)=h_{0,11}(0)$, and hence $h_{0,11}(-1)=h_{0,11}(0)$. Furthermore, from (3.11) and $L_{0}\left(h_{0,11}(\theta)\right)=\tau_{c}(A_{1}h_{0,11}(0)+A_{2}h_{0,11}(-1))$, we have
\begin{eqnarray}
(0,0)^{T}=\tau_{c}(A_{1}h_{0,11}(0)+A_{2}h_{0,11}(-1))+\frac{1}{\sqrt{\ell\pi}}A_{11}.
\end{eqnarray}

Therefore, by combining with $h_{0,11}(-1)=h_{0,11}(0)$ and (3.12), we can obtain
\begin{eqnarray*}
(-\tau_{c} A_{1}-\tau_{c}A_{2})h_{0,11}(0)=\frac{1}{\sqrt{\ell \pi}}A_{11},
\end{eqnarray*}
and hence $h_{0,11}(\theta)=C_{2}$ with
\begin{eqnarray*}
C_{2}=(-\tau_{c}A_{1}-\tau_{c}A_{2})^{-1}\frac{1}{\sqrt{\ell \pi}}A_{11}.
\end{eqnarray*}
\par~~~
\par\noindent (2) Calculations of $h_{2n_{c},20}(\theta)$ and $h_{2n_{c},11}(\theta)$ for $n_{c}\in\mathbb{N}$

(i) Notice that
\begin{eqnarray}
\left\{\begin{aligned}
\dot{h}_{2n_{c},20}(\theta)-2i\omega_{c}h_{2n_{c},20}(\theta)&=(0,0)^{T}, \\
\dot{h}_{2n_{c},20}(0)-\mathscr{L}_{0}\left(h_{2n_{c},20}(\theta)\right)&=\frac{1}{\sqrt{2\ell\pi}}\widetilde{A}_{20},
\end{aligned}\right.
\end{eqnarray}
then from (3.13), we have $h_{2n_{c},20}(\theta)=e^{2i\omega_{c}\theta}h_{2n_{c},20}(0)$, and hence $h_{2n_{c},20}(-1)=e^{-2i\omega_{c}}h_{2n_{c},20}(0)$. Furthermore, from (3.13) and
\begin{eqnarray*}
\mathscr{L}_{0}\left(h_{2n_{c},20}(\theta)\right)=-\tau_{c}\frac{4n_{c}^{2}}{\ell^{2}}\left(D_{1}h_{2n_{c},20}(0)+D_{2}h_{2n_{c},20}(-1)\right)+\tau_{c}A_{1}h_{2n_{c},20}(0)+\tau_{c}A_{2} h_{2n_{c},20}(-1),
\end{eqnarray*}
we have
\begin{eqnarray}
2i\omega_{c}h_{2n_{c},20}(0)=\frac{1}{\sqrt{2\ell\pi}} \widetilde{A}_{20}-\tau_{c}\frac{4n_{c}^{2}}{\ell^{2}}\left(D_{1}h_{2n_{c},20}(0)+D_{2}h_{2n_{c},20}(-1)\right)+\tau_{c}A_{1}h_{2n_{c},20}(0)+\tau_{c}A_{2} h_{2n_{c},20}(-1).
\end{eqnarray}

Therefore, by combining with $h_{2n_{c},20}(-1)=e^{-2i\omega_{c}}h_{2n_{c},20}(0)$ and (3.14), we can obtain
\begin{eqnarray*}
(2i\omega_{c}I_{2}+\tau_{c}\frac{4n_{c}^{2}}{\ell^{2}}D_{1}+\tau_{c}\frac{4n_{c}^{2}}{\ell^{2}}D_{2}e^{-2i\omega_{c}}-\tau_{c} A_{1}-\tau_{c}A_{2}e^{-2i\omega_{c}})h_{0200}(0)=\frac{1}{\sqrt{2\ell\pi}}\widetilde{A}_{20},
\end{eqnarray*}
and hence $h_{2n_{c},20}(\theta)=e^{2i\omega_{c}\theta}C_{3}$ with
\begin{eqnarray*}
C_{3}=(2i\omega_{c}I_{2}+\tau_{c}\frac{4n_{c}^{2}}{\ell^{2}}D_{1}+\tau_{c}\frac{4n_{c}^{2}}{\ell^{2}}D_{2}e^{-2i\omega_{c}}-\tau_{c} A_{1}-\tau_{c}A_{2}e^{-2i\omega_{c}})^{-1}\frac{1}{\sqrt{2\ell\pi}}\widetilde{A}_{20}.
\end{eqnarray*}

Here, $A_{20}^{d}$ and $\widetilde{A}_{20}$ are defined by (2.46) and (3.5), respectively.

(ii) Notice that
\begin{eqnarray}
\left\{\begin{aligned}
&\dot{h}_{2n_{c},11}(\theta)=(0,0)^{T}, \\
&\dot{h}_{2n_{c},11}(0)-\mathscr{L}_{0}\left(h_{2n_{c},11}(\theta)\right)=\frac{1}{\sqrt{2\ell\pi}}\widetilde{A}_{11},
\end{aligned}\right.
\end{eqnarray}
then from (3.15), we have $h_{2n_{c},11}(\theta)=h_{2n_{c},11}(0)$, and hence $h_{2n_{c},11}(-1)=h_{2n_{c},11}(0)$. Furthermore, from (3.15) and
\begin{eqnarray*}
\mathscr{L}_{0}\left(h_{2n_{c},11}(\theta)\right)=-\tau_{c}\frac{4n_{c}^{2}}{\ell^{2}}\left(D_{1}h_{2n_{c},11}(0)+D_{2}h_{2n_{c},11}(-1)\right)+\tau_{c}A_{1} h_{2n_{c},11}(0)+\tau_{c}A_{2}h_{2n_{c},11}(-1),
\end{eqnarray*}
we have
\begin{eqnarray}
(0,0)^{T}=-\tau_{c}\frac{4n_{c}^{2}}{\ell^{2}}\left(D_{1}h_{2n_{c},11}(0)+D_{2}h_{2n_{c},11}(-1)\right)+\tau_{c}A_{1}h_{2n_{c},11}(0)+\tau_{c}A_{2}h_{2n_{c},11}(-1)+\frac{1}{\sqrt{2\ell\pi}} \widetilde{A}_{11}.
\end{eqnarray}

Therefore, by combining with $h_{2n_{c},11}(-1)=h_{2n_{c},11}(0)$ and (3.16), we can obtain
\begin{eqnarray*}
\left(\tau_{c}\frac{4n_{c}^{2}}{\ell^{2}}D_{1}+\tau_{c}\frac{4n_{c}^{2}}{\ell^{2}}D_{2}-\tau_{c}A_{1}-\tau_{2}A_{2}\right)h_{2n_{c},11}(0)=\frac{1}{\sqrt{2\ell\pi}}\widetilde{A}_{11},
\end{eqnarray*}
and hence $h_{2n_{c},11}(\theta)=C_{4}$ with
\begin{eqnarray*}
C_{4}=\left(\tau_{c}\frac{4n_{c}^{2}}{\ell^{2}}D_{1}+\tau_{c}\frac{4n_{c}^{2}}{\ell^{2}}D_{2}-\tau_{c}A_{1}-\tau_{c}A_{2}\right)^{-1}\frac{1}{\sqrt{2\ell\pi}}\widetilde{A}_{11}.
\end{eqnarray*}

Here, $A_{11}^{d}$ and $\widetilde{A}_{11}$ are defined by (2.46) and (3.5), respectively.
\par~~~
\par\noindent (3) Calculations of $h_{0,20}(\theta)$ and $h_{0,11}(\theta)$ for $n_{c}=0$

(i) Notice that
\begin{eqnarray}
\left\{\begin{aligned}
\dot{h}_{0,20}(\theta)-2i\omega_{c}h_{0,20}(\theta)&=\frac{1}{\sqrt{\ell\pi}}\Phi(\theta)\Psi(0)A_{20}, \\
\dot{h}_{0,20}(0)-L_{0}\left(h_{0,20}(\theta)\right)&=\frac{1}{\sqrt{\ell\pi}}A_{20},
\end{aligned}\right.
\end{eqnarray}
then from (3.17), we have
\begin{eqnarray*}
h_{0,20}(\theta)=e^{2i\omega_{c}\theta}h_{0,20}(0)+\frac{1}{\sqrt{\ell\pi}}e^{2i\omega_{c}\theta}\int_{0}^{\theta}\Phi(t)\Psi(0)A_{20}e^{-2i\omega_{c}t}dt,
\end{eqnarray*}
and hence
\begin{eqnarray}
h_{0,20}(-1)=e^{-2i\omega_{c}}h_{0,20}(0)+\frac{1}{\sqrt{\ell\pi}}e^{-2i\omega_{c}}\int_{0}^{-1}\Phi(t)\Psi(0)A_{20}e^{-2i\omega_{c}t}dt.
\end{eqnarray}
Furthermore, from (3.17), we have
\begin{eqnarray}
2i\omega_{c}h_{0,20}(0)+\frac{1}{\sqrt{\ell\pi}}\Phi(0)\Psi(0)A_{20}=\frac{1}{\sqrt{\ell\pi}}A_{20}+L_{0}\left(h_{0,20}(\theta)\right).
\end{eqnarray}

Therefore, by combining with $L_{0}\left(h_{0,20}(\theta)\right)=\tau_{c}\left(A_{1}h_{0,20}(0)+A_{2}h_{0,20}(-1)\right)$, (3.18) and (3.19), we can obtain
\begin{eqnarray*}\begin{aligned}
&\left(2i\omega_{c}I_{2}-\tau_{c}A_{1}-\tau_{c}A_{2}e^{-2i\omega_{c}}\right)h_{0,20}(0)= \\
&\left(\frac{1}{\sqrt{\ell\pi}}-\frac{1}{\sqrt{\ell\pi}}\Phi(0)\Psi(0)\right)A_{20}-\tau_{c}A_{2}\frac{1}{\sqrt{\ell\pi}}e^{-2i\omega_{c}}\int_{-1}^{0}e^{-2i\omega_{c}t}\Phi(t)\Psi(0)A_{20}dt,
\end{aligned}\end{eqnarray*}
and hence
\begin{eqnarray*}
h_{0,20}(\theta)=\frac{1}{\sqrt{\ell\pi}}e^{2i\omega_{c}\theta}\int_{0}^{\theta}\Phi(t)\Psi(0)A_{20}e^{-2i\omega_{c}t}dt+C_{5}e^{2i\omega_{c}\theta}
\end{eqnarray*}
with
\begin{eqnarray*}\begin{aligned}
C_{5}&=\left(2i\omega_{c}I_{2}-\tau_{c}A_{1}-\tau_{c}A_{2}e^{-2i\omega_{c}}\right)^{-1} \\
&\left((\frac{1}{\sqrt{\ell\pi}}-\frac{1}{\sqrt{\ell\pi}}\Phi(0)\Psi(0))A_{20}-\tau_{c}A_{2}\frac{1}{\sqrt{\ell\pi}}e^{-2i\omega_{c}}\int_{-1}^{0}e^{-2i\omega_{c}t}\Phi(t)\Psi(0)A_{20}dt\right).
\end{aligned}\end{eqnarray*}

(ii) Notice that
\begin{eqnarray}
\left\{\begin{aligned}
&\dot{h}_{0,11}(\theta)=\frac{1}{\sqrt{\ell\pi}}\Phi(\theta)\Psi(0)A_{11}, \\
&\dot{h}_{0,11}(0)-L_{0}(h_{0,11}(\theta))=\frac{1}{\sqrt{\ell\pi}}A_{11},
\end{aligned}\right.
\end{eqnarray}
then from (3.20), we have
\begin{eqnarray*}
h_{0,11}(\theta)=h_{0,11}(0)+\frac{1}{\sqrt{\ell\pi}}\int_{0}^{\theta}\Phi(t)\Psi(0)A_{11}dt,
\end{eqnarray*}
and hence
\begin{eqnarray}
h_{0,11}(-1)=h_{0,11}(0)+\frac{1}{\sqrt{\ell\pi}}\int_{0}^{-1}\Phi(t)\Psi(0)A_{11}dt.
\end{eqnarray}
Furthermore, from (3.20), we have
\begin{eqnarray}
\frac{1}{\sqrt{\ell\pi}}\Phi(0)\Psi(0)A_{11}=L_{0}(h_{0,11}(\theta))+\frac{1}{\sqrt{\ell\pi}}A_{11}.
\end{eqnarray}

Therefore, by combining with $L_{0}(h_{0,11}(\theta))=\tau_{c}\left(A_{1}h_{0,11}(0)+A_{2}h_{0,11}(-1)\right)$, (3.21) and (3.22), we can obtain
\begin{eqnarray*}
\left(-\tau_{c}A_{1}-\tau_{c}A_{2}\right)h_{0,11}(0)=\frac{1}{\sqrt{\ell\pi}}A_{11}-\frac{1}{\sqrt{\ell\pi}}\Phi(0)\Psi(0)A_{11}-\tau_{c}A_{2}\frac{1}{\sqrt{\ell\pi}}\int_{-1}^{0}\Phi(t)\Psi(0)A_{11}dt,
\end{eqnarray*}
and hence
\begin{eqnarray*}
h_{0,11}(\theta)=\frac{1}{\sqrt{\ell\pi}}\int_{0}^{\theta}\Phi(t)\Psi(0)A_{11}dt+C_{6}
\end{eqnarray*}
with
\begin{eqnarray*}
C_{6}=\left(-\tau_{c}A_{1}-\tau_{c}A_{2}\right)^{-1}\left(\frac{1}{\sqrt{\ell\pi}}A_{11}-\frac{1}{\sqrt{\ell\pi}}\Phi(0)\Psi(0)A_{11}-\tau_{c}A_{2}\frac{1}{\sqrt{\ell\pi}}\int_{-1}^{0}\Phi(t)\Psi(0)A_{11}dt\right).
\end{eqnarray*}

\subsection{Calculations of $A_{i,j}$ and $S_{2}(\Phi(\theta)z_{x},w)$}

In this subsection, let $F(\varphi,\mu)=\left(F^{(1)}(\varphi,\mu),F^{(2)}(\varphi,\mu)\right)^{T}$ and $\varphi=\left(\varphi_{1},\varphi_{2}\right)^{T}\in\mathcal{C}$, and we write
\begin{eqnarray}
\frac{1}{j!}F_{j}(\varphi,\mu)=\sum_{j_{1}+j_{2}+j_{3}+j_{4}=j}\frac{1}{j_{1}!j_{2}!j_{3}!j_{4}!}f_{j_{1}j_{2}j_{3}j_{4}} \varphi_{1}^{j_{1}}(0)\varphi_{2}^{j_{2}}(0)\varphi_{1}^{j_{3}}(-1)\mu^{j_{4}},
\end{eqnarray}
where
\begin{eqnarray*}
f_{j_{1}j_{2}j_{3}j_{4}}=\left(f_{j_{1}j_{2}j_{3}j_{4}}^{(1)},f_{j_{1}j_{2}j_{3}j_{4}}^{(2)}\right)^{T}
\end{eqnarray*}
with
\begin{eqnarray*}
f_{j_{1}j_{2}j_{3}j_{4}}^{(k)}=\frac{\partial^{j_{1}+j_{2}+j_{3}+j_{4}}F^{(k)}(0,0,0,0)}{\partial\varphi_{1}^{j_{1}}(0) \partial\varphi_{2}^{j_{2}}(0)\partial\varphi_{1}^{j_{3}}(-1)\partial\mu^{j_{4}}},~k=1,2.
\end{eqnarray*}

Then from (3.23), we have
\begin{eqnarray}\begin{aligned}
F_{2}(\varphi,\mu)&=F_{2}(\varphi,0) \\
&=2\sum_{j_{1}+j_{2}+j_{3}+j_{4}=2}\frac{1}{j_{1}!j_{2}!j_{3}!j_{4}!}f_{j_{1}j_{2}j_{3}j_{4}} \varphi_{1}^{j_{1}}(0)\varphi_{2}^{j_{2}}(0)\varphi_{1}^{j_{3}}(-1)\mu^{j_{4}} \\
&=f_{0020}\varphi_{1}^{2}(-1)+2f_{0110}\varphi_{2}(0)\varphi_{1}(-1)+f_{0200}\varphi_{2}^{2}(0) \\
&+2f_{1010}\varphi_{1}(0)\varphi_{1}(-1)+2f_{1100}\varphi_{1}(0)\varphi_{2}(0)+f_{2000}\varphi_{1}^{2}(0)
\end{aligned}\end{eqnarray}
and
\begin{eqnarray}\begin{aligned}
F_{3}(\varphi,0)&=6\sum_{j_{1}+j_{2}+j_{3}+j_{4}=3}\frac{1}{j_{1}!j_{2}!j_{3}!j_{4}!}f_{j_{1}j_{2}j_{3}j_{4}} \varphi_{1}^{j_{1}}(0)\varphi_{2}^{j_{2}}(0)\varphi_{1}^{j_{3}}(-1)\mu^{j_{4}} \\
&=f_{0030}\varphi_{1}^{3}(-1)+3f_{0120}\varphi_{2}(0)\varphi_{1}^{2}(-1)+3f_{0210}\varphi_{2}^{2}(0)\varphi_{1}(-1) \\
&+f_{0300}\varphi_{2}^{3}(0)+3f_{1020}\varphi_{1}(0)\varphi_{1}^{2}(-1)+6f_{1110}\varphi_{1}(0)\varphi_{2}(0)\varphi_{1}(-1) \\
&+3f_{1200}\varphi_{1}(0)\varphi_{2}^{2}(0)+3f_{2010}\varphi_{1}^{2}(0)\varphi_{1}(-1)+3f_{2100}\varphi_{1}^{2}(0)\varphi_{2}(0) \\
&+f_{3000}\varphi_{1}^{3}(0).
\end{aligned}\end{eqnarray}

Notice that
\begin{eqnarray}\begin{aligned}
&\varphi(\theta)=\Phi(\theta)z_{x}=\phi(\theta)z_{1}(t)\gamma_{n_{c}}(x)+\overline{\phi}(\theta)z_{2}(t)\gamma_{n_{c}}(x) \\
&=\left(\begin{array}{c}
\phi_{1}(\theta)z_{1}(t)\gamma_{n_{c}}(x)+\overline{\phi}_{1}(\theta)z_{2}(t)\gamma_{n_{c}}(x) \\
\phi_{2}(\theta)z_{1}(t)\gamma_{n_{c}}(x)+\overline{\phi}_{2}(\theta)z_{2}(t)\gamma_{n_{c}}(x)
\end{array}\right) \\
&=\left(\begin{array}{c}
\varphi_{1}(\theta) \\
\varphi_{2}(\theta)
\end{array}\right),
\end{aligned}\end{eqnarray}
and similar to (2.41), we have
\begin{eqnarray}
F_{2}\left(\Phi(\theta)z_{x},0\right)=\sum_{q_{1}+q_{2}=2}A_{q_{1}q_{2}}\gamma_{n_{c}}^{q_{1}+q_{2}}(x)z_{1}^{q_{1}}z_{2}^{q_{2}},
\end{eqnarray}
then by combining with (3.24), (3.26) and (3.27), we have
\begin{eqnarray*}\begin{aligned}
A_{20}&=f_{0020}\phi_{1}^{2}(-1)+2f_{0110}\phi_{2}(0)\phi_{1}(-1)+f_{0200}\phi_{2}^{2}(0)+2f_{1010}\phi_{1}(0)\phi_{1}(-1) \\
&+2f_{1100}\phi_{1}(0)\phi_{2}(0)+f_{2000}\phi_{1}^{2}(0), \\
A_{02}&=f_{0020}\overline{\phi}_{1}^{2}(-1)+2f_{0110}\overline{\phi}_{2}(0)\overline{\phi}_{1}(-1)+f_{0200}\overline{\phi}_{2}^{2}(0)+2f_{1010}\overline{\phi}_{1}(0)\overline{\phi}_{1}(-1) \\
&+2f_{1100}\overline{\phi}_{1}(0)\overline{\phi}_{2}(0)+f_{2000}\overline{\phi}_{1}^{2}(0), \\
A_{11}&=2f_{0020}\phi_{1}(-1)\overline{\phi}_{1}(-1)+2f_{0110}\left(\phi_{2}(0)\overline{\phi}_{1}(-1)+\overline{\phi}_{2}(0)\phi_{1}(-1)\right)+2f_{0200}\phi_{2}(0)\overline{\phi}_{2}(0) \\
&+2f_{1010}\left(\phi_{1}(0)\overline{\phi}_{1}(-1)+\overline{\phi}_{1}(0)\phi_{1}(-1)\right)+2f_{1100}\left(\phi_{1}(0)\overline{\phi}_{2}(0)+\overline{\phi}_{1}(0)\phi_{2}(0)\right) \\
&+2f_{2000}\phi_{1}(0)\overline{\phi}_{1}(0).
\end{aligned}\end{eqnarray*}
Furthermore, from (2.41), (3.25) and (3.26), we have
\begin{eqnarray*}\begin{aligned}
A_{30}&=f_{0030}\phi_{1}^{3}(-1)+3f_{0120}\phi_{2}(0)\phi_{1}^{2}(-1)+3f_{0210}\phi_{2}^{2}(0)\phi_{1}(-1)+f_{0300}\phi_{2}^{3}(0) \\
&+3f_{1020}\phi_{1}(0)\phi_{1}^{2}(-1)+6f_{1110}\phi_{1}(0)\phi_{2}(0)\phi_{1}(-1) \\
&+3f_{1200}\phi_{1}(0)\phi_{2}^{2}(0)+3f_{2010}\phi_{1}^{2}(0)\phi_{1}(-1) \\
&+3f_{2100}\phi_{1}^{2}(0)\phi_{2}(0)+f_{3000}\phi_{1}^{3}(0), \\
A_{03}&=f_{0030}\overline{\phi}_{1}^{3}(-1)+3f_{0120}\overline{\phi}_{2}(0)\overline{\phi}_{1}^{2}(-1)+3f_{0210}\overline{\phi}_{2}^{2}(0)\overline{\phi}_{1}(-1)+f_{0300}\overline{\phi}_{2}^{3}(0) \\
&+3f_{1020}\overline{\phi}_{1}(0)\overline{\phi}_{1}^{2}(-1)+6f_{1110}\overline{\phi}_{1}(0)\overline{\phi}_{2}(0)\overline{\phi}_{1}(-1) \\
&+3f_{1200}\overline{\phi}_{1}(0)\overline{\phi}_{2}^{2}(0)+3f_{2010}\overline{\phi}_{1}^{2}(0)\overline{\phi}_{1}(-1) \\
&+3f_{2100}\overline{\phi}_{1}^{2}(0)\overline{\phi}_{2}(0)+f_{3000}\overline{\phi}_{1}^{3}(0)
\end{aligned}\end{eqnarray*}
and
\begin{eqnarray*}\begin{aligned}
A_{21}&=3f_{0030}\phi_{1}^{2}(-1)\overline{\phi}_{1}(-1)+3f_{0120}\left(\phi_{2}(0)2\phi_{1}(-1)\overline{\phi}_{1}(-1)+\overline{\phi}_{2}(0)\phi_{1}^{2}(-1)\right) \\
&+3f_{0210}\left(2\phi_{2}(0)\overline{\phi}_{2}(0)\overline{\phi}_{1}(-1)+\phi_{2}^{2}(0)\overline{\phi}_{1}(-1)\right)+3f_{0300}\phi_{2}^{2}(0)\overline{\phi}_{2}(0), \\
&+3f_{1020}\left(2\phi_{1}(0)\phi_{1}(-1)\overline{\phi}_{1}(-1)+\overline{\phi}_{1}(0)\phi_{1}^{2}(-1)\right) \\
&+6f_{1110}\left(\phi_{1}(0)\phi_{2}(0)\overline{\phi}_{1}(-1)+\phi_{1}(0)\overline{\phi}_{2}(0)\phi_{1}(-1)+\overline{\phi}_{1}(0)\phi_{2}(0)\phi_{1}(-1)\right) \\
&+3f_{1200}\left(2\phi_{1}(0)\phi_{2}(0)\overline{\phi}_{2}(0)+\overline{\phi}_{1}(0)\phi_{2}^{2}(0)\right)+3f_{2100}\left(\phi_{1}^{2}(0)\overline{\phi}_{2}(0)+2\phi_{1}(0)\overline{\phi}_{1}(0)\phi_{2}(0)\right) \\
&+3f_{2010}\left(2\phi_{1}(0)\overline{\phi}_{1}(0)\phi_{1}(-1)+\phi_{1}^{2}(0)\overline{\phi}_{1}(-1)\right) \\
&+3f_{3000}\phi_{1}^{2}(0)\overline{\phi}_{1}(0), \\
A_{12}&=3f_{0030}\phi_{1}(-1)\overline{\phi}_{1}^{2}(-1)+3f_{0120}\left(\phi_{2}(0)\overline{\phi}_{1}^{2}(-1)+2\overline{\phi}_{2}(0)\phi_{1}(-1)\overline{\phi}_{1}(-1)\right) \\
&+3f_{0210}\left(2\phi_{2}(0)\overline{\phi}_{2}(0)\overline{\phi}_{1}(-1)+\overline{\phi}_{2}^{2}(0)\phi_{1}(-1)\right)+3f_{0300}\phi_{2}(0)\overline{\phi}^{2}_{2}(0), \\
&+3f_{1020}\left(\phi_{1}(0)\overline{\phi}_{1}^{2}(-1)+2\overline{\phi}_{1}(0)\phi_{1}(-1)\overline{\phi}_{1}(-1)\right) \\
&+6f_{1110}\left(\phi_{1}(0)\overline{\phi}_{2}(0)\overline{\phi}_{1}(-1)+\overline{\phi}_{1}(0)\phi_{2}(0)\overline{\phi}_{1}(-1)+\overline{\phi}_{1}(0)\overline{\phi}_{2}(0)\phi_{1}(-1)\right) \\
&+3f_{1200}\left(\phi_{1}(0)\overline{\phi}_{2}^{2}(0)+2\overline{\phi}_{1}(0)\phi_{2}(0)\overline{\phi}_{2}(0)\right)+3f_{2100}\left(2\phi_{1}(0)\overline{\phi}_{1}(0)\overline{\phi}_{2}(0)+\overline{\phi}_{1}^{2}(0)\phi_{2}(0)\right) \\
&+3f_{2010}\left(\overline{\phi}_{1}^{2}(0)\phi_{1}(-1)+2\phi_{1}(0)\overline{\phi}_{1}(0)\overline{\phi}_{1}(-1)\right) \\
&+3f_{3000}\phi_{1}(0)\overline{\phi}^{2}_{1}(0).
\end{aligned}\end{eqnarray*}

Moreover, from (3.23), we have
\begin{eqnarray}\begin{aligned}
&F_{2}(\varphi(\theta)+w,\mu)=2 \sum_{j_{1}+j_{2}+j_{3}+j_{4}=2}\frac{1}{j_{1}!j_{2}!j_{3}!j_{4}!}f_{j_{1}j_{2}j_{3}j_{4}} (\varphi_{1}(0)+w_{1}(0))^{j_{1}}(\varphi_{2}(0)+w_{2}(0))^{j_{2}}(\varphi_{1}(-1)+w_{1}(-1))^{j_{3}}\mu^{j_{4}} \\
&=f_{0020}(\varphi_{1}(-1)+w_{1}(-1))^{2}+2f_{0110}(\varphi_{2}(0)+w_{2}(0))(\varphi_{1}(-1)+w_{1}(-1))+f_{0200}(\varphi_{2}(0)+w_{2}(0))^{2} \\
&+2f_{1010}(\varphi_{1}(0)+w_{1}(0))(\varphi_{1}(-1)+w_{1}(-1)) \\
&+2f_{1100}(\varphi_{1}(0)+w_{1}(0))(\varphi_{2}(0)+w_{2}(0))+f_{2000}(\varphi_{1}(0)+w_{1}(0))^{2}.
\end{aligned}\end{eqnarray}

Notice that
\begin{eqnarray}\begin{aligned}
\varphi(\theta)+w(\theta)&=\Phi(\theta)z_{x}+w(\theta)=\phi(\theta)z_{1}(t)\gamma_{n_{c}}(x)+\overline{\phi}(\theta)z_{2}(t)\gamma_{n_{c}}(x)+w(\theta) \\
&=\left(\begin{array}{c}
\phi_{1}(\theta)z_{1}(t)\gamma_{n_{c}}(x)+\overline{\phi}_{1}(\theta)z_{2}(t)\gamma_{n_{c}}(x)+w_{1}(\theta) \\
\phi_{2}(\theta)z_{1}(t)\gamma_{n_{c}}(x)+\overline{\phi}_{2}(\theta)z_{2}(t)\gamma_{n_{c}}(x)+w_{2}(\theta)
\end{array}\right) \\
&=\left(\begin{array}{c}
\varphi_{1}(\theta)+w_{1}(\theta) \\
\varphi_{2}(\theta)+w_{2}(\theta)
\end{array}\right)
\end{aligned}\end{eqnarray}
and
\begin{eqnarray}\begin{aligned}
F_{2}\left(\Phi(\theta)z_{x}+w,\mu\right)&=F_{2}\left(\Phi(\theta)z_{x}+w,0\right) \\
&=\sum_{q_{1}+q_{2}=2} A_{q_{1}q_{2}}\gamma_{n_{c}}^{q_{1}+q_{2}}(x)z_{1}^{q_{1}}z_{2}^{q_{2}}+S_{2}\left(\Phi(\theta)z_{x}, w\right)+O\left(|w|^{2}\right),
\end{aligned}\end{eqnarray}
then by combining with (3.28), (3.29) and (3.30), we have
\begin{eqnarray*}\begin{aligned}
&S_{2}\left(\Phi(\theta)z_{x}, w\right) \\
&=2f_{0020}\left(\phi_{1}(-1)z_{1}(t)\gamma_{n_{c}}(x)+\overline{\phi}_{1}(-1)z_{2}(t)\gamma_{n_{c}}(x)\right)w_{1}(-1) \\
&+2f_{0110}\left(\left(\phi_{2}(0)z_{1}(t)\gamma_{n_{c}}(x)+\overline{\phi}_{2}(0)z_{2}(t)\gamma_{n_{c}}(x)\right)w_{1}(-1)+\left(\phi_{1}(-1)z_{1}(t) \gamma_{n_{c}}(x)+\overline{\phi}_{1}(-1)z_{2}(t)\gamma_{n_{c}}(x)\right)w_{2}(0)\right) \\
&+2f_{0200}\left(\phi_{2}(0)z_{1}(t)\gamma_{n_{c}}(x)+\overline{\phi}_{2}(0)z_{2}(t)\gamma_{n_{c}}(x)\right)w_{2}(0) \\
&+2f_{1010}\left(\left(\phi_{1}(0)z_{1}(t)\gamma_{n_{c}}(x)+\overline{\phi}_{1}(0)z_{2}(t)\gamma_{n_{c}}(x)\right)w_{1}(-1)+\left(\phi_{1}(-1)z_{1}(t) \gamma_{n_{c}}(x)+\overline{\phi}_{1}(-1)z_{2}(t)\gamma_{n_{c}}(x)\right)w_{1}(0)\right) \\
&+2f_{1100}\left(\left(\phi_{1}(0)z_{1}(t)\gamma_{n_{c}}(x)+\overline{\phi}_{1}(0)z_{2}(t)\gamma_{n_{c}}(x)\right)w_{2}(0)+\left(\phi_{2}(0)z_{1}(t) \gamma_{n_{c}}(x)+\overline{\phi}_{2}(0)z_{2}(t)\gamma_{n_{c}}(x)\right)w_{1}(0)\right) \\
&+2f_{2000}\left(\phi_{1}(0)z_{1}(t)\gamma_{n_{c}}(x)+\overline{\phi}_{1}(0)z_{2}(t)\gamma_{n_{c}}(x)\right)w_{1}(0).
\end{aligned}\end{eqnarray*}

\section{Application to a predator-prey model with memory and gestation time delays}
\label{sec:4}

In this section, we consider the following diffusive predator-prey model with ratio-dependent Holling type-\uppercase\expandafter{\romannumeral3} functional response, which includes with memory and gestation delays
\begin{eqnarray}
\left\{\begin{aligned}
&\frac{\partial u(x,t)}{\partial t}=d_{11}\Delta u(x,t)+u(x,t)\left(1-u(x,t)\right)-\frac{\beta u^{2}(x,t)v(x,t)}{u^{2}(x,t)+mv^{2}(x,t)}, & x\in(0,\ell\pi),~t>0, \\
&\frac{\partial v(x,t)}{\partial t}=d_{22}\Delta v(x,t)-d_{21}\left(v(x,t)u_{x}(x,t-\tau)\right)_{x}+\gamma v(x,t)\left(1-\frac{v(x,t)}{u(x,t-\tau)}\right), & x\in(0,\ell\pi),~t>0, \\
&u_{x}(0,t)=u_{x}(\ell\pi,t)=v_{x}(0,t)=v_{x}(\ell\pi,t)=0, & t \geq 0, \\
&u(x,t)=u_{0}(x,t),~v(x,t)=v_{0}(x,t), & x\in(0,\ell\pi),~-\tau \leq t \leq 0,
\end{aligned}\right.
\end{eqnarray}
where $u(x,t)$ and $v(x,t)$ stand for the densities of the prey and predators at location $x$ and time $t$, respectively, $\beta>0$, $m>0$ and $\gamma>0$.

\subsection{The case of with memory delay and without gestation delay}

When system (4.1) includes memory delay and doesn't include gestation delay, that is to say, in the model (1.3), we let
\begin{eqnarray*}\begin{aligned}
f\left(u(x,t),v(x,t)\right)&=u(x,t)\left(1-u(x,t)\right)-\frac{\beta u^{2}(x,t)v(x,t)}{u^{2}(x,t)+mv^{2}(x,t)}, \\
g\left(u(x,t),v(x,t)\right)&=\gamma v(x,t)\left(1-\frac{v(x,t)}{u(x,t)}\right).
\end{aligned}\end{eqnarray*}

Then the model (1.3) can be written as
\begin{eqnarray}\left\{\begin{aligned}
&\frac{\partial u(x,t)}{\partial t}=d_{11}\Delta u(x,t)+u(x,t)\left(1-u(x,t)\right)-\frac{\beta u^{2}(x,t)v(x,t)}{u^{2}(x,t)+mv^{2}(x,t)}, & x\in(0,\ell\pi),~t>0, \\
&\frac{\partial v(x,t)}{\partial t}=d_{22}\Delta v(x,t)-d_{21}\left(v(x,t)u_{x}(x,t-\tau)\right)_{x}+\gamma v(x,t)\left(1-\frac{v(x,t)}{u(x,t)}\right), & x\in(0,\ell\pi),~t>0, \\
&u_{x}(0,t)=u_{x}(\ell\pi,t)=v_{x}(0,t)=v_{x}(\ell\pi,t)=0, & t \geq 0, \\
&u(x,t)=u_{0}(x,t), & x \in (0,\ell\pi),~-\tau \leq t \leq 0, \\
&v(x,t)=v_{0}(x), & x \in (0,\ell\pi).
\end{aligned}\right.\end{eqnarray}

Notice that for the system (4.2), when $d_{21}=0$, the global asymptotic stability of the positive constant steady state in this system has been investigated by Shi et al. in \cite{lv41}. Furthermore, the normal form for Hopf bifurcation can be calculated by using the developed algorithm in \cite{lv30}, and the detail calculation procedures are give in Appendix A. In the following, we first give the stability and Hopf bifurcation analysis for the model (4.2), then by employing the developed procedure in \cite{lv30} for calculating the normal form for Hopf bifurcation, the direction and stability of the Hopf bifurcation are determined.

\subsubsection{Stability and Hopf bifurcation analysis}

The system (4.2) has the positive constant steady state $E_{*}\left(u_{*},v_{*}\right)$, where
\begin{eqnarray}
u_{*}=v_{*}=1-\frac{\beta}{m+1}
\end{eqnarray}
with $0<\beta<m+1$. For $E_{*}\left(u_{*},v_{*}\right)$, form (2.4), when $m>1$, we have
\begin{eqnarray*}\begin{aligned}
&a_{11}=\frac{2\beta}{(m+1)^{2}}-1\left\{
\begin{array}{cc}
\leq 0, & 0<\beta \leq \frac{(m+1)^{2}}{2}, \\
>0, & \beta>\frac{(m+1)^{2}}{2}.
\end{array}\right.
\end{aligned}\end{eqnarray*}

Notice that when $m>1$, if $a_{11}>0$, then we have $\beta>\frac{(m+1)^{2}}{2}>m+1$, which is contradict to the condition $0<\beta<m+1$. Thus, when $m>1$, $a_{11}\leq 0$ under the condition $0<\beta<m+1$. When $0<m<1$, we have
\begin{eqnarray}\begin{aligned}
&a_{11}=\frac{2\beta}{(m+1)^{2}}-1 \left\{
\begin{array}{cc}
\leq 0, & 0 < \beta \leq \frac{(m+1)^{2}}{2}, \\
>0, & \frac{(m+1)^{2}}{2}<\beta<m+1.
\end{array}\right.
\end{aligned}\end{eqnarray}

Furthermore, we have
\begin{eqnarray}\begin{aligned}
&a_{12}=\frac{\beta(m-1)}{(m+1)^{2}}\left\{
\begin{array}{cc}
\leq 0, & 0 < m \leq 1, \\
>0, & m>1,
\end{array}\right. \\
&a_{21}=\gamma>0,~a_{22}=-\gamma<0, \\
&b_{11}=0,~b_{12}=0,~b_{21}=0,~b_{22}=0.
\end{aligned}\end{eqnarray}

Moreover, by combining with (4.4), (4.5),
\begin{eqnarray*}
D_{1}=\left(\begin{array}{cc}
d_{11} & 0 \\
0 & d_{22}
\end{array}\right),~D_{2}=\left(\begin{array}{cc}
0 & 0 \\
-d_{21} v_{*} & 0
\end{array}\right),~A_{1}=\left(\begin{array}{cc}
a_{11} & a_{12} \\
a_{21} & a_{22}
\end{array}\right),~A_{2}=\left(\begin{array}{cc}
b_{11} & b_{12} \\
b_{21} & b_{22}
\end{array}\right),
\end{eqnarray*}
and
\begin{eqnarray*}
M_{n}(\lambda)=\lambda I_{2}+\frac{n^{2}}{\ell^{2}}D_{1}+\frac{n^{2}}{\ell^{2}}e^{-\lambda\tau}D_{2}-A_{1}-A_{2}e^{-\lambda\tau},
\end{eqnarray*}
or according to (2.7), the characteristic equation of system (4.2) can be written as
\begin{eqnarray}
\Gamma_{n}(\lambda)=\det\left(M_{n}(\lambda)\right)=\lambda^{2}-T_{n}\lambda+\widetilde{J}_{n}(\tau)=0,
\end{eqnarray}
where
\begin{eqnarray}\begin{aligned}
T_{n}&=(a_{11}+a_{22})-(d_{11}+d_{22})\frac{n^{2}}{\ell^{2}}, \\
\widetilde{J}_{n}(\tau)&=d_{11}d_{22}\frac{n^{4}}{\ell^{4}}-\left(d_{11}a_{22}+d_{22}a_{11}+d_{21}a_{12}v_{*}e^{-\lambda \tau}\right)\frac{n^{2}}{\ell^{2}}+\operatorname{Det}(A_{1})
\end{aligned}\end{eqnarray}
with $\operatorname{Det}(A_{1})=a_{11}a_{22}-a_{12}a_{21}$.

When $d_{21}=0$, from the second mathematical expression in (4.7), we denote
\begin{eqnarray}
J_{n}:=d_{11} d_{22}\frac{n^{4}}{\ell^{4}}-\left(d_{11}a_{22}+d_{22}a_{11}\right)\frac{n^{2}}{\ell^{2}}+\operatorname{Det}(A_{1}),
\end{eqnarray}
then from (4.4), (4.5), (4.7) and (4.8), it is easy to verify that $T_{n}<0$ and $J_{n}>0$ provided that
\begin{eqnarray*}
(C_{0}):~0<\beta\leq\frac{(m+1)^{2}}{2},~0<m\leq1.
\end{eqnarray*}
This implies that when $d_{21}=0$ and the condition $(C_{0})$ holds, the positive constant steady state $E_{*}(u_{*},v_{*})$ is asymptotically stable for $d_{11} \geq 0$ and $d_{22} \geq 0$. In this subsection, we always assume that the condition $(C_{0})$ holds.

Since $J_{n}>0$ and $a_{12}<0$ under the condition $(C_{0})$, then according to (4.6), we have
\begin{eqnarray*}
\Gamma_{n}(0)=J_{n}-d_{21}a_{12}v_{*}\frac{n^{2}}{\ell^{2}}>0.
\end{eqnarray*}
This implies that $\lambda=0$ is not a root of (4.6). Let $\lambda=i \omega_{n}(\omega_{n}>0)$ be a root of (4.6). From (4.4), (4.5) and by substituting $\lambda=i \omega_{n}(\omega_{n}>0)$ into (4.6), and separating the real from the imaginary parts, we have
\begin{eqnarray}
\left\{\begin{aligned}
&J_{n}-\omega_{n}^{2}=\frac{n^{2}}{\ell^{2}}d_{21}a_{12}v_{*}\cos(\omega_{n}\tau), \\
&T_{n}\omega_{n}=\frac{n^{2}}{\ell^{2}}d_{21}a_{12}v_{*}\sin(\omega_{n}\tau),
\end{aligned}\right.
\end{eqnarray}
which yields
\begin{eqnarray}
\omega^{4}+P_{n}\omega^{2}+Q_{n}=0,
\end{eqnarray}
where
\begin{eqnarray}
P_{n}=T_{n}^{2}-2J_{n}=\left(d_{11}^{2}+d_{22}^{2}\right)\frac{n^{4}}{\ell^{4}}-2\left(d_{11}a_{11}+d_{22}a_{22}\right)\frac{n^{2}}{\ell^{2}}+a_{11}^{2}+a_{22}^{2}+2a_{12}a_{21},
\end{eqnarray}
and
\begin{eqnarray}
Q_{n}=\left(J_{n}+d_{21}a_{12}v_{*}\frac{n^{2}}{\ell^{2}}\right)\left(J_{n}-d_{21}a_{12}v_{*}\frac{n^{2}}{\ell^{2}}\right).
\end{eqnarray}
Here,
\begin{eqnarray}
a_{11}^{2}+a_{22}^{2}+2a_{12}a_{21}\left\{\begin{array}{cc}
\leq 0, & c_{*}\leq 0, \\
>0, & c_{*}>0
\end{array}\right.\end{eqnarray}
with
\begin{eqnarray}
c_{*}=\frac{4\beta^{2}-4\beta(m+1)^{2}+(m+1)^{4}+\gamma^{2}(m+1)^{4}+2\beta\gamma(m-1)(m+1)^{2}}{(m+1)^{4}}.
\end{eqnarray}

Notice that from (4.10), we can define
\begin{eqnarray*}
\omega_{n}^{\pm}:=\sqrt{\frac{-P_{n} \pm \sqrt{P_{n}^{2}-4Q_{n}}}{2}}.
\end{eqnarray*}
Moreover, by combining with (4.4), (4.5), (4.11) and (4.13), if we assume that $c_{*}>0$, then $P_{n}>0$ for any $n \in \mathbb{N}_{0}$. Furthermore, by defining
\begin{eqnarray}
d_{21}^{(n)}=-\frac{J_{n}}{a_{12}v_{*}(n/\ell)^{2}}=-\frac{1}{a_{12}v_{*}}\left(d_{11}d_{22}(n/\ell)^{2}+\frac{\operatorname{Det}(A_{1})}{(n/\ell)^{2}}-\left(d_{11}a_{22}+d_{22}a_{11}\right)\right)>0,
\end{eqnarray}
then for fixed $n$, by (4.12) we have
\begin{eqnarray}
Q_{n}\left\{\begin{aligned}
&>0, & 0<d_{21}<d_{21}^{(n)}, \\
&=0, & d_{21}=d_{21}^{(n)}, \\
&<0, & d_{21}>d_{21}^{(n)}.
\end{aligned}\right.
\end{eqnarray}

Thus, when $d_{21}>d_{21}^{(n)}$, (4.10) has one positive root $\omega_{n}$, where
\begin{eqnarray}
\omega_{n}=\sqrt{\frac{-P_{n}+\sqrt{P_{n}^{2}-4Q_{n}}}{2}}.
\end{eqnarray}

Notice that $T_{n}<0$ for any $n \in \mathbb{N}_{0}$ and $a_{12}<0$ under the condition $(C_{0})$, then from the second mathematical expression in (4.9), we have
\begin{eqnarray*}
\sin \left(\omega_{n}\tau\right)=\frac{T_{n}\omega_{n}}{(n/\ell)^{2}a_{12}d_{21}v_{*}}>0.
\end{eqnarray*}

Thus, from the first mathematical expression in (4.9), we can set
\begin{eqnarray}
\tau_{n,j}=\frac{1}{\omega_{n}}\left\{\arccos \left\{\frac{J_{n}-\omega_{n}^{2}}{d_{21}a_{12}v_{*}(n/\ell)^{2}}\right\}+2j\pi\right\},~n \in \mathbb{N},~j \in \mathbb{N}_{0}.
\end{eqnarray}
Furthermore, it is easy to verify that the transversality condition satisfies
\begin{eqnarray*}
\left.\frac{d\operatorname{Re}(\lambda(\tau))}{d\tau}\right|_{\tau=\tau_{n,j}}>0.
\end{eqnarray*}

Furthermore, if we let
\begin{eqnarray}
d_{21}^{*}=\min_{n \in \mathbb{N}}\left\{d_{21}^{(n)}\right\}>0,
\end{eqnarray}
then from (4.15), it is easy to verify that $d_{21}^{(n)}$ is decreasing for $n<\ell \sqrt[4]{\frac{\operatorname{Det}(A_{1})}{d_{11} d_{22}}}$, is increasing for $n>\ell \sqrt[4]{\frac{\operatorname{Det}(A_{1})}{d_{11}d_{22}}}$ and $d_{21}^{(n)}\rightarrow \infty$ as $n \rightarrow \infty$. This implies that $d_{21}^{*}$ exists. For fixed $d_{21}>d_{21}^{*}$, define an index set
\begin{eqnarray*}
U\left(d_{21}\right)=\left\{n \in \mathbb{N}:d_{21}^{(n)}<d_{21}\right\}.
\end{eqnarray*}

Moreover, according to the above analysis, we have the following results.
\begin{theorem}
If the condition $(C_{0})$ holds and $c_{*}>0$, then we have the following conclusions:

(a) when $0<d_{21} \leq d_{21}^{*}$, the positive constant steady state $E_{*}\left(u_{*},v_{*}\right)$ of system (4.2) is locally
asymptotically stable for any $\tau \geq 0$;

(b) when $d_{21}>d_{21}^{*}$, if denote
\begin{eqnarray*}
\tau_{*}(d_{21})=\min_{n \in U(d_{21})}\left\{\tau_{n,0}\right\},
\end{eqnarray*}
then the positive constant steady state $E_{*}\left(u_{*},v_{*}\right)$ of system (4.2) is asymptotically stable for $0 \leq \tau<\tau_{*}(d_{21})$ and unstable for $\tau>\tau_{*}(d_{21})$. Furthermore, system (4.2) undergoes Hopf bifurcations at $\tau=\tau_{n,0}$ for $n\in U(d_{21})$.
\end{theorem}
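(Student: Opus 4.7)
The plan is to run the classical delay-induced Hopf bifurcation program for the family (4.6): establish stability at $\tau=0$, scan for purely imaginary roots as $\tau$ grows, and combine continuity of the spectrum in $\tau$ with the transversality condition to track stability mode by mode. For the baseline $\tau=0$ I set $e^{-\lambda\tau}=1$ in (4.6); under $(C_0)$ one has $T_n<0$, $J_n>0$ for every $n\in\mathbb{N}_0$ and $a_{12}\leq 0$, so $-T_n>0$ and the constant term $J_n-d_{21}a_{12}v_*n^2/\ell^2>0$, and the Routh--Hurwitz criterion puts both roots of each $\Gamma_n(\lambda)$ in the open left half-plane. Thus $E_*$ is asymptotically stable at $\tau=0$, providing the starting point for both (a) and (b).

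For part (a), a purely imaginary root $i\omega$ of (4.6) with $n\geq 1$ forces $\omega^2$ to satisfy the biquadratic (4.10). The hypothesis $c_*>0$, together with $a_{11}\leq 0$ and $a_{22}<0$ under $(C_0)$, makes every term in $P_n$ (see (4.11)) non-negative and the constant term strictly positive, so $P_n>0$. If $d_{21}\leq d_{21}^*$, then $d_{21}\leq d_{21}^{(n)}$ for every $n\geq 1$, whence $Q_n\geq 0$ by (4.16); hence (4.10) has no positive root, and (4.6) has no purely imaginary root for any mode $n\geq 1$ and any $\tau$. For $n=0$ the polynomial $\Gamma_0(\lambda)=\lambda^2-T_0\lambda+\mathrm{Det}(A_1)$ is $\tau$-independent and already stable. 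Continuity of the eigenvalues in $\tau$ then forbids any crossing of the imaginary axis, which is exactly (a).

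For part (b), $U(d_{21})$ is non-empty, and for each $n\in U(d_{21})$ one has $Q_n<0$ and $P_n>0$, producing the unique positive $\omega_n$ of (4.17) and the sequence of candidate delays (4.18). I would confirm the transversality inequality $d\mathrm{Re}\,\lambda/d\tau|_{\tau_{n,j}}>0$ by differentiating $\Gamma_n(\lambda)=0$ implicitly and using the characteristic equation itself to eliminate $e^{-\lambda\tau}$; the real part of $(d\lambda/d\tau)^{-1}$ at $i\omega_n$ simplifies to $(P_n+2\omega_n^2)/\bigl[T_n^2\omega_n^2+(J_n-\omega_n^2)^2\bigr]$, which is strictly positive because $P_n>0$. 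Consequently every imaginary crossing moves eigenvalues from left to right, and the count of unstable eigenvalues is non-decreasing in $\tau$. Combined with the baseline stability, this yields asymptotic stability on $[0,\tau_*(d_{21}))$ and the presence of at least one eigenvalue with positive real part for every $\tau>\tau_*(d_{21})$, proving the instability claim. The Hopf bifurcation theorem for partial functional differential equations (as used throughout Section 2) then applies at each $\tau_{n,j}$ because $\pm i\omega_n$ is a simple pair crossing transversally.

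The main obstacle is really the transversality computation, since it must simultaneously yield the correct sign, certify that $\pm i\omega_n$ is a simple root of $\Gamma_n$, and guarantee that the denominator appearing in the implicit differentiation does not vanish at $\tau_{n,j}$. All three points are handled by the single identity above: under $c_*>0$ the denominator equals $T_n^2\omega_n^2+(J_n-\omega_n^2)^2>0$, and its positivity drives the whole argument. A minor subsidiary issue is to exclude mode collisions at $\tau_*(d_{21})$ (i.e., two distinct $n$ attaining the minimum of $\tau_{n,0}$), but even in that non-generic case the monotone-crossing argument still gives instability for $\tau>\tau_*(d_{21})$ and simultaneous Hopf bifurcations at the shared delay.
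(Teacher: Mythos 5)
Your proposal is correct and follows essentially the same route as the paper: Routh--Hurwitz stability at the baseline, reduction of purely imaginary roots to the biquadratic (4.10) with $P_n>0$ forced by $c_*>0$ and the sign of $Q_n$ controlled by $d_{21}$ versus $d_{21}^{(n)}$, then transversality plus the monotone left-to-right crossing count. Your explicit transversality expression $\operatorname{Re}(d\lambda/d\tau)^{-1}=(P_n+2\omega_n^2)/\bigl[T_n^2\omega_n^2+(J_n-\omega_n^2)^2\bigr]$ is exactly the quantity the paper computes (in Lemma 4.7 for the analogous system), since $P_n+2\omega_n^2=\sqrt{P_n^2-4Q_n}$ and the denominator equals $\bigl(d_{21}a_{12}v_*n^2/\ell^2\bigr)^2$ by (4.9); you simply supply the detail the paper leaves as "easy to verify."
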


\subsubsection{Direction and stability of the Hopf bifurcation}

We now investigate the direction and stability of the Hopf bifurcation by some numerical simulations. In this section, we use the following initial conditions for the system (4.2)
\begin{eqnarray*}
u(x,t)=u_{0}(x),~v(x,t)=v_{0}(x),~t \in\left[-\tau,0\right],
\end{eqnarray*}
and we set the parameters as follows
\begin{eqnarray*}
d_{11}=0.6,~d_{22}=0.8,~m=0.5,~\gamma=0.5,~\beta=1,~\ell=2.
\end{eqnarray*}

Then according to (4.3), (4.4) and (4.5), we have $E_{*}\left(u_{*},v_{*}\right)=(0.3333,0.3333)$,
\begin{eqnarray*}
a_{11}=-0.1111,~a_{12}=-0.2222,~a_{21}=0.5,~a_{22}=-0.5.
\end{eqnarray*}

It follows from (4.11) and (4.15) that
\begin{eqnarray*}
P_{n}=0.0625n^{4}+0.2333n^{2}+0.0401>0
\end{eqnarray*}
and
\begin{eqnarray}
d_{21}^{(n)}=1.62n^{2}+\frac{9}{n^{2}}+5.25.
\end{eqnarray}

Notice that $P_{n}>0$ for any $n \in \mathbb{N}_{0}$, which together with (4.16), implies that for a fixed $n$, (4.10) has no positive root for $d_{21}<d_{21}^{(n)}$ and has only one positive root for $d_{21}\geq d_{21}^{(n)}$. From (4.20), it is easy to verify that $d_{21}^{(n)}<d_{21}^{(n+1)}$ for any $n \in \mathbb{N}_{0}$, and
\begin{eqnarray}
d_{21}^{(1)}=15.87,~d_{21}^{(2)}=13.98<d_{21}^{(3)}=20.83.
\end{eqnarray}

Therefore, by combining with (4.19) and (4.21), we have $d_{21}^{*}=d_{21}^{(2)}=13.98$. It follows from (4.14) that $c_{*}=0.0401>0$. By Theorem 4.1, we have the following Propositions 4.2 and 4.3.

\begin{proposition}
For system (4.2) with the parameters $d_{11}=0.6,~d_{22}=0.8,~m=0.5,~\gamma=0.5,~\beta=1,~\ell=2$, when $0 \leq d_{21}<d_{21}^{(2)}=13.98$, the positive constant steady state $E_{*}\left(u_{*},v_{*}\right)=(0.3333,0.3333)$ is locally asymptotically stable for any $\tau \geq 0$.
\end{proposition}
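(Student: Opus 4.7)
The plan is to reduce the proposition to part (a) of Theorem~4.1 by verifying (i) the standing hypothesis $(C_0)$, (ii) the sign condition $c_\ast>0$, and (iii) the identification $d_{21}^\ast=d_{21}^{(2)}$ for the given parameter values, and then to handle the boundary case $d_{21}=0$ separately using the remark preceding the theorem (which states that when $d_{21}=0$ and $(C_0)$ holds, $E_\ast$ is asymptotically stable for any $d_{11},d_{22}\ge 0$, independent of $\tau$).

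First, I would check $(C_0)$. With $m=0.5$ and $\beta=1$ we have $0<m\le 1$ and $(m+1)^2/2=1.125$, so $0<\beta\le (m+1)^2/2$; thus $(C_0)$ is satisfied. Next, using the formula for $c_\ast$ in (4.14) and substituting $\beta=1$, $m=0.5$, $\gamma=0.5$, one obtains $c_\ast=0.0401>0$, as already recorded in the excerpt. This ensures $P_n>0$ for every $n\in\mathbb{N}_0$, which is precisely the setting of Theorem~4.1.

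The central computation is to show $d_{21}^\ast=d_{21}^{(2)}$. From (4.15), after plugging in the constants, one gets the explicit form $d_{21}^{(n)}=1.62\,n^2+9/n^2+5.25$, valid for $n\in\mathbb{N}$. Treating $n$ as a continuous variable, the function $h(n):=1.62\,n^2+9/n^2+5.25$ has a unique minimum at $n=(9/1.62)^{1/4}\approx 1.54$, so over positive integers the minimum is achieved at either $n=1$ or $n=2$. Direct evaluation gives $d_{21}^{(1)}=15.87$ and $d_{21}^{(2)}=13.98$, and $h$ is strictly increasing for $n\ge 2$, so $d_{21}^{(n)}\ge d_{21}^{(2)}$ for all $n\ge 2$. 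Hence $d_{21}^\ast=\min_{n\in\mathbb{N}} d_{21}^{(n)}=d_{21}^{(2)}=13.98$.

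Finally, I would assemble the conclusion: for $0<d_{21}<d_{21}^{(2)}=d_{21}^\ast$ the hypothesis of Theorem~4.1(a) holds, so $E_\ast(0.3333,0.3333)$ is locally asymptotically stable for every $\tau\ge 0$; and for $d_{21}=0$ the same conclusion follows from the $(C_0)$-based remark above Theorem~4.1. Combining these two cases yields the statement of Proposition~4.2. The only genuine obstacle is justifying the integer minimization of $d_{21}^{(n)}$, but because $h(n)$ is strictly convex on $(0,\infty)$ with a unique interior minimum between $n=1$ and $n=2$, this reduces to the two-point comparison above; the rest is arithmetic verification of the parameter-dependent quantities $a_{ij}$, $P_n$, and $c_\ast$.
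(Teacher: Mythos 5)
Your proposal is correct and follows essentially the same route as the paper: verify $(C_0)$ and $c_*>0$, compute $d_{21}^{(n)}=1.62\,n^2+9/n^2+5.25$ explicitly, identify its integer minimum $d_{21}^{*}=d_{21}^{(2)}=13.98$, and invoke Theorem 4.1(a). Your convexity argument for the integer minimization and your separate treatment of the endpoint $d_{21}=0$ (which Theorem 4.1(a) technically excludes, since it assumes $0<d_{21}\le d_{21}^{*}$) are small refinements that the paper leaves implicit.
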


Figure 1 illustrates the stability region, and the Hopf bifurcation curves are plotted in the $d_{21}-\tau$ plane for $20 \leq d_{21} \leq 150$. The Hopf bifurcation curves $\tau=\tau_{2,0}$ and $\tau=\tau_{3,0}$ intersect at the point $P_{1}(42.87,0.817)$, which is the Hopf-Hopf bifurcation point. Furthermore, when $d_{21}=2$ and $\tau=4.2$, according to (4.21), we can see that the point $P_{2}(2,4.2)$ satisfies $0 \leq d_{21}<d_{21}^{(2)}=13.98$. According to Proposition 1, we know that under the above parameter settings, as long as $0 \leq d_{21}<d_{21}^{(2)}=13.98$, the positive constant steady state $E_{*}(u_{*},v_{*})$ of system (4.2) is locally asymptotically stable for any $\tau \geq 0$. Especially, by taking the point $P_{2}(2,4.2)$ which satisfies $0 \leq d_{21}<d_{21}^{(2)}=13.98$, we illustrate this result in Fig.2 with the initial values $u_{0}(x)=0.3333-0.1\cos(x),~v_{0}(x)=0.3333+0.1\cos(x)$.

\begin{figure}[!htbp]
\centering
\includegraphics[width=2.5in]{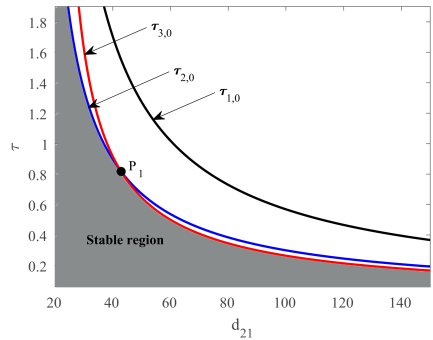}
\caption{Stable region and Hopf bifurcation curves $\tau=\tau_{n,0},~n=1,2,3$ in $d_{21}-\tau$ plane for the parameters $d_{11}=0.6,~d_{22}=0.8,~m=0.5,~\gamma=0.5,~\beta=1,~\ell=2$. Hopf bifurcation curves $\tau=\tau_{2,0}$ and $\tau=\tau_{3,0}$ intersect at the point $P_{1}(42.87,0.817)$.}
\label{fig:1}
\end{figure}

\begin{figure}[!htbp]
\centering
\includegraphics[width=2in]{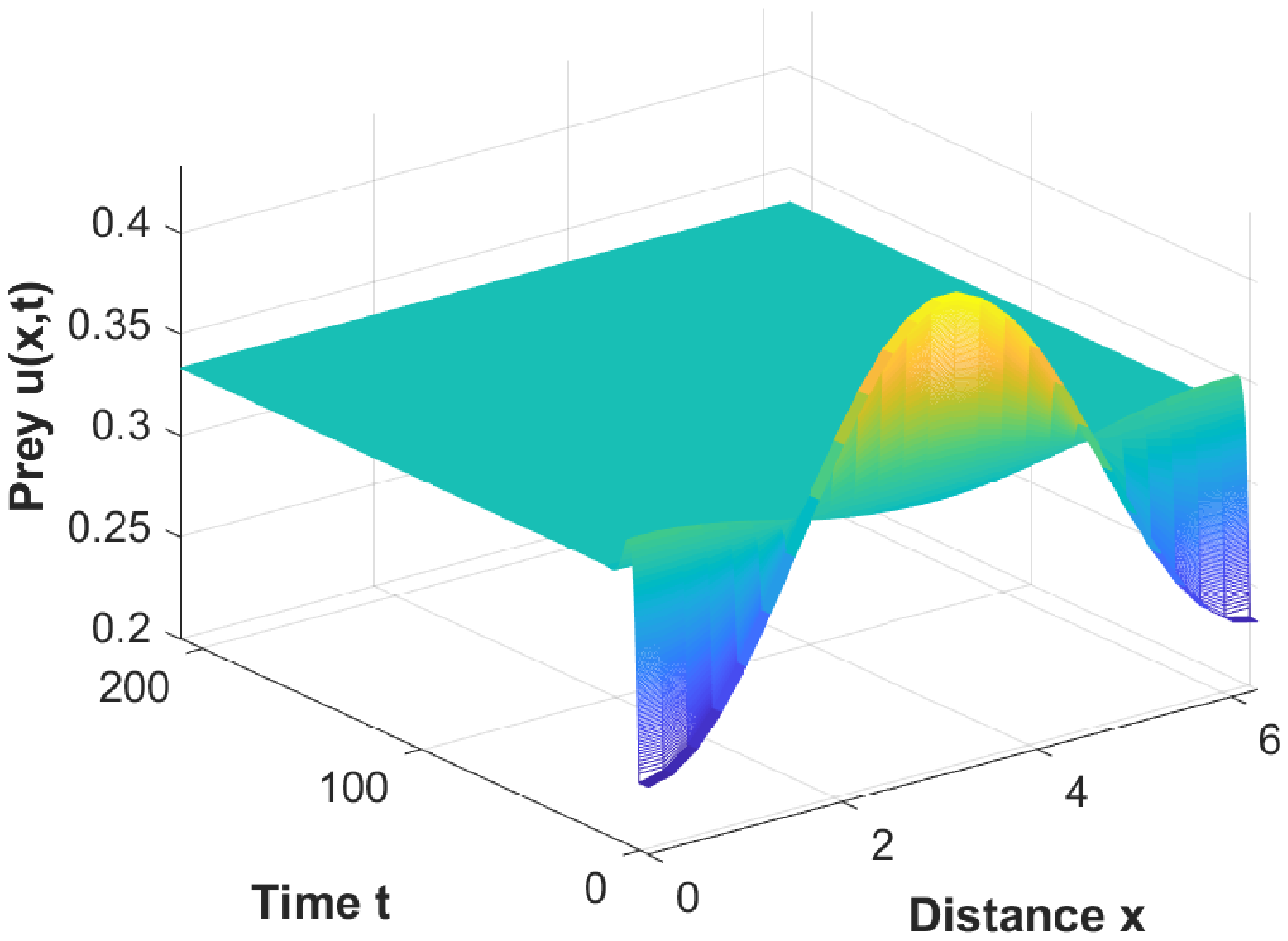}
\includegraphics[width=2in]{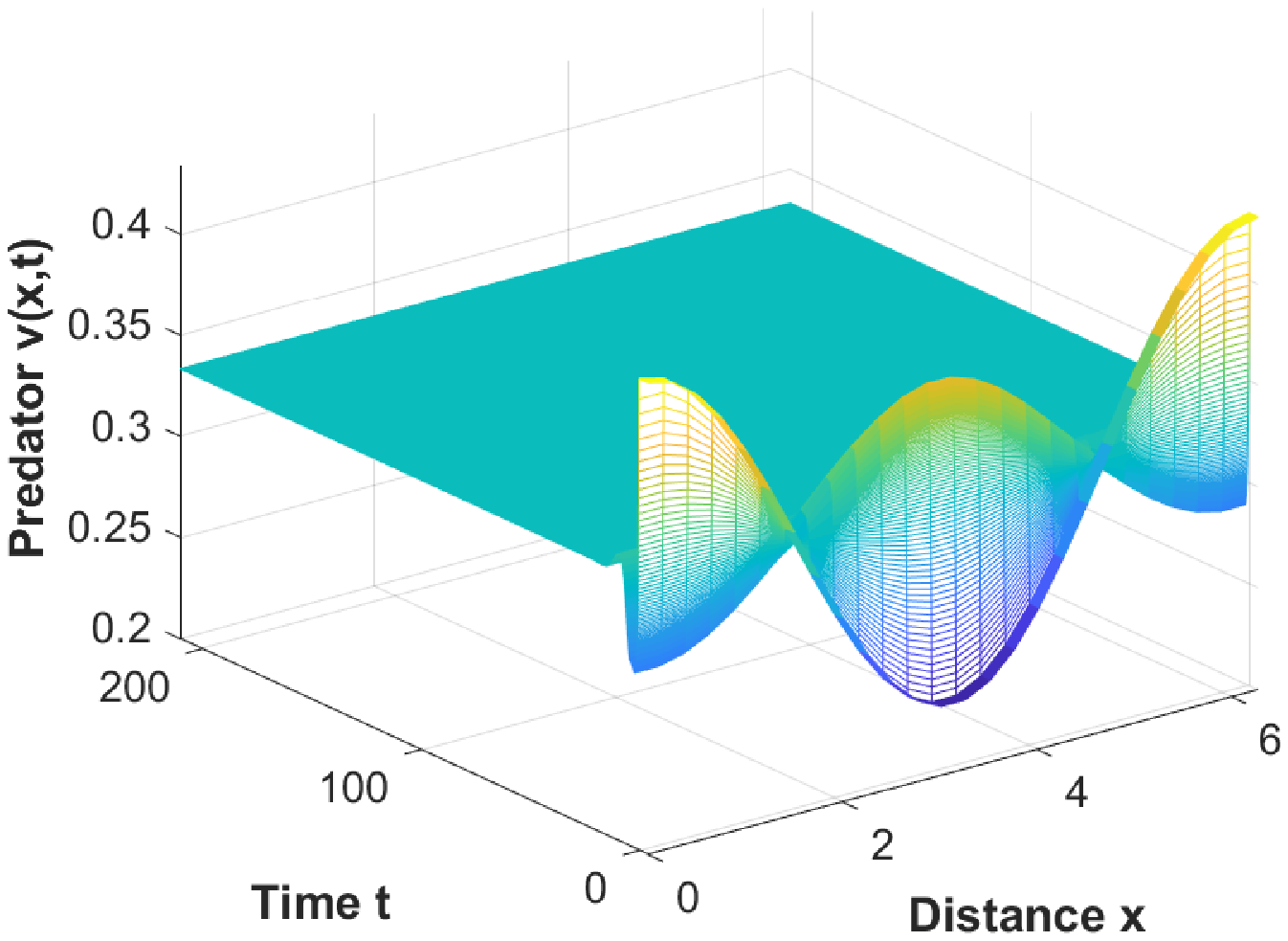} \\
\textbf{(a)} \hspace{4.5cm} \textbf{(b)} \\
\includegraphics[width=2in]{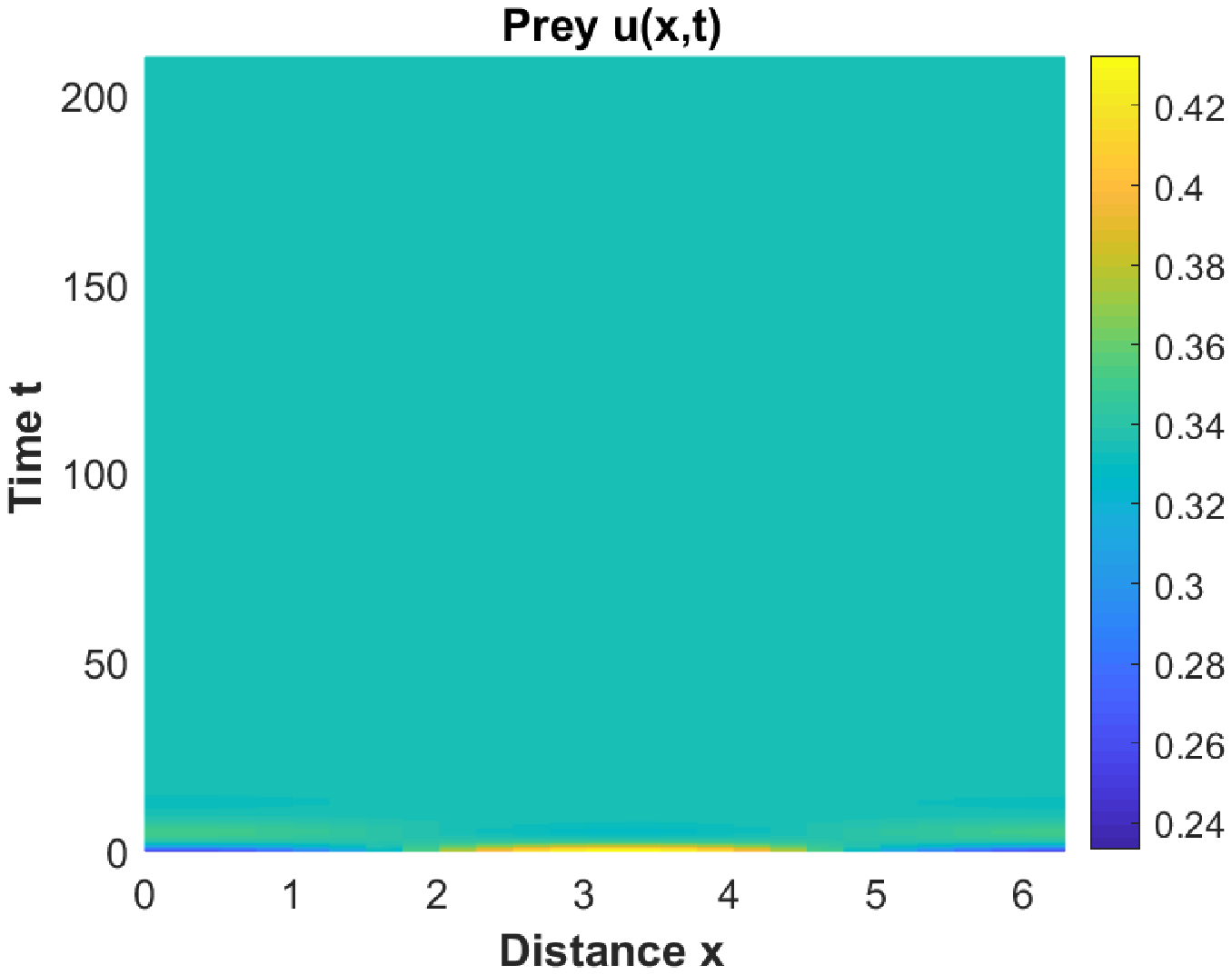}
\includegraphics[width=2in]{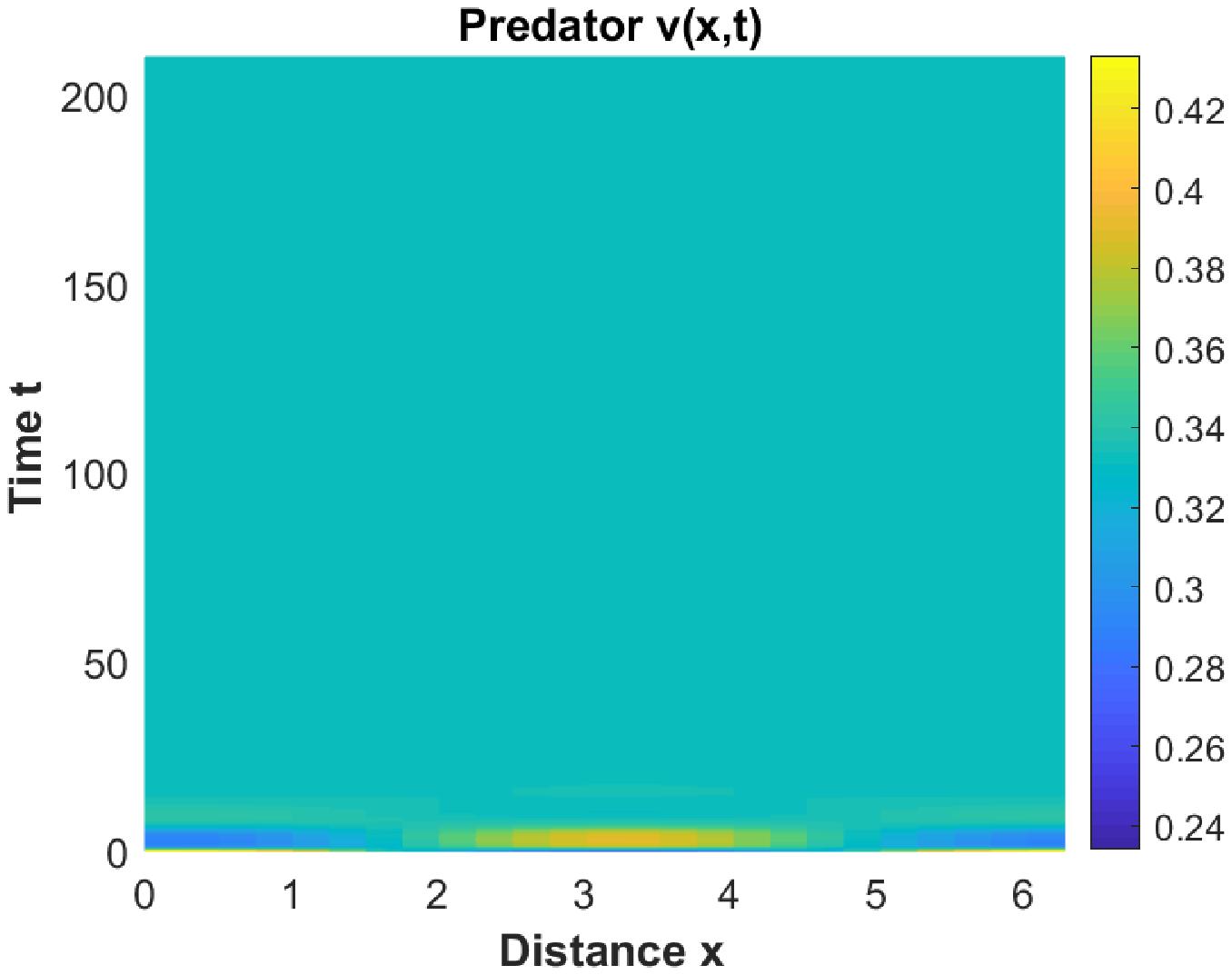} \\
\textbf{(c)} \hspace{4.5cm} \textbf{(d)} \\
\caption{For the parameters $d_{11}=0.6,~d_{22}=0.8,~m=0.5,~\gamma=0.5,~\beta=1,~\ell=2$, and $(d_{21},\tau)$ is chosen as the point $P_{2}(2,4.2)$ which satisfies $0<d_{21}<d_{21}^{(2)}=13.98$. The positive constant steady state $E_{*}\left(u_{*},v_{*}\right)=(0.3333,0.3333)$ is locally asymptotically stable for any $\tau \geq 0$. (a) and (b) are the evolution processes of the solutions $u(x,t)$ and $v(x,t)$ of system (4.2), respectively. (c) and (d) are spatio-temporal diagrams of the solutions $u(x,t)$ and $v(x,t)$ of system (4.2), respectively. The initial values are $u_{0}(x)=0.3333-0.1\cos(x),~v_{0}(x)=0.3333+0.1\cos(x)$.}
\label{fig:2}
\end{figure}

\begin{proposition}
For system (4.2) with the parameters $d_{11}=0.6,~d_{22}=0.8,~m=0.5,~\gamma=0.5,~\beta=1,~\ell=2$, and for fixed $d_{21}>d_{21}^{(2)}=13.98$, the positive constant steady state $E_{*}\left(u_{*},v_{*}\right)$ is asymptotically stable for $\tau<\tau_{*}\left(d_{21}\right)$ and unstable for $\tau>\tau_{*}\left(d_{21}\right)$.
\end{proposition}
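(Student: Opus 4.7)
The plan is to derive Proposition 4.3 as a direct specialization of Theorem 4.1(b) to the parameter set $d_{11}=0.6,~d_{22}=0.8,~m=0.5,~\gamma=0.5,~\beta=1,~\ell=2$, so the work reduces to checking the hypotheses of that theorem and then invoking its conclusion. First I would verify the standing assumption $(C_{0})$: since $m=0.5<1$ and $(m+1)^{2}/2=1.125$, we have $0<\beta=1<1.125$, so $(C_{0})$ holds. Next, using (4.14), the value $c_{*}=0.0401>0$ has already been computed in the paragraph preceding the proposition, which supplies the second hypothesis of Theorem 4.1.

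Second, I would justify the threshold $d_{21}^{*}=d_{21}^{(2)}=13.98$. From (4.20), the sequence $d_{21}^{(n)}=1.62 n^{2}+9/n^{2}+5.25$ is convex in $n$ over positive integers, so by computing $d_{21}^{(1)}=15.87$, $d_{21}^{(2)}=13.98$, $d_{21}^{(3)}=20.83$, and observing that $d_{21}^{(n)}\to\infty$ as $n\to\infty$, one concludes that $\min_{n\in\mathbb{N}} d_{21}^{(n)}$ is attained at $n=2$. Therefore $d_{21}^{*}=13.98$ as claimed in (4.19).

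Third, for any fixed $d_{21}>d_{21}^{*}$, the index set $U(d_{21})=\{n\in\mathbb{N}:d_{21}^{(n)}<d_{21}\}$ is a nonempty finite set (finiteness follows from $d_{21}^{(n)}\to\infty$). For each $n\in U(d_{21})$, (4.16) gives $Q_{n}<0$, and since $P_{n}>0$ was verified earlier, (4.10) has a unique positive root $\omega_{n}$ given by (4.17); the bifurcation value $\tau_{n,0}$ is then defined by (4.18), and the transversality condition $\frac{d\operatorname{Re}\lambda}{d\tau}\big|_{\tau=\tau_{n,0}}>0$ holds. Setting $\tau_{*}(d_{21})=\min_{n\in U(d_{21})}\tau_{n,0}$, Theorem 4.1(b) immediately yields local asymptotic stability of $E_{*}$ for $0\le\tau<\tau_{*}(d_{21})$ and instability for $\tau>\tau_{*}(d_{21})$.

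The one step requiring minor care, and the only place where a routine calculation is not automatic, is confirming that no other wave number produces a smaller critical delay than the one dictated by $\min d_{21}^{(n)}$; this is handled by the monotonicity/convexity argument in the second step above. Everything else is a straightforward citation of Theorem 4.1(b) and of the formulas (4.11)--(4.18) already established in Section 4.1.1.
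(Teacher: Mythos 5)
Your proposal is correct and follows essentially the same route as the paper: the paper likewise verifies $(C_{0})$, computes $c_{*}=0.0401>0$, evaluates $d_{21}^{(1)},d_{21}^{(2)},d_{21}^{(3)}$ from (4.20) to conclude $d_{21}^{*}=d_{21}^{(2)}=13.98$, and then invokes Theorem 4.1(b). Your explicit convexity/monotonicity justification that the minimum of $d_{21}^{(n)}$ is attained at $n=2$ is a welcome tightening of the paper's brief remark (and is more consistent than the paper's passing claim that $d_{21}^{(n)}<d_{21}^{(n+1)}$ for all $n$, which its own numbers contradict).
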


From Fig.1, it is obvious to see that
\begin{eqnarray*}
\tau_{*}(d_{21})=\left\{\begin{aligned}
&\tau_{2,0}, & d_{21}^{(2)}<d_{21}<42.87, \\
&\tau_{3,0}, & 42.87<d_{21}<150,
\end{aligned}\right.
\end{eqnarray*}
and when $d_{21}=21$, it follows from (4.17) and (4.18) that
\begin{eqnarray}
\tau_{2,0}=2.5896<\tau_{3,0}=17.9261.
\end{eqnarray}

For $d_{21}=21$ which satisfies $d_{21}^{(2)}=13.98<d_{21}<42.87$, according to (4.22), we know that system (4.2) undergoes Hopf bifurcation at $\tau_{2,0}=2.5896$. Furthermore, the direction and stability of the Hopf bifurcation can be determined by calculating $K_{1}K_{2}$ and $K_{2}$ using the procedures listed in Appendix A. By a direct calculation, we obtain
\begin{eqnarray*}
K_{1}=0.1092>0,~K_{2}=103.5071>0,~K_{1}K_{2}=11.2997>0,
\end{eqnarray*}
which implies that the spatially inhomogeneous Hopf bifurcation at $\tau_{2,0}$ is subcritical and unstable. When $d_{21}=21$ and $\tau=1.5$, by combining with (4.21) and (4.22), we can see that the point $P_{3}(21,1.5)$ satisfies $d_{21}^{(2)}=13.98<d_{21}<42.87$ and $\tau=1.5<\tau_{2,0}=2.5896$. There exists an unstable spatially inhomogeneous periodic solution, and its amplitude is decreasing, see Fig.3 (a)-(d) for detail. The initial values are $u_{0}(x)=0.3333+0.02\cos(x),~v_{0}(x)=0.3333+0.02\cos(x)$.

\begin{figure}[!htbp]
\centering
\includegraphics[width=2in]{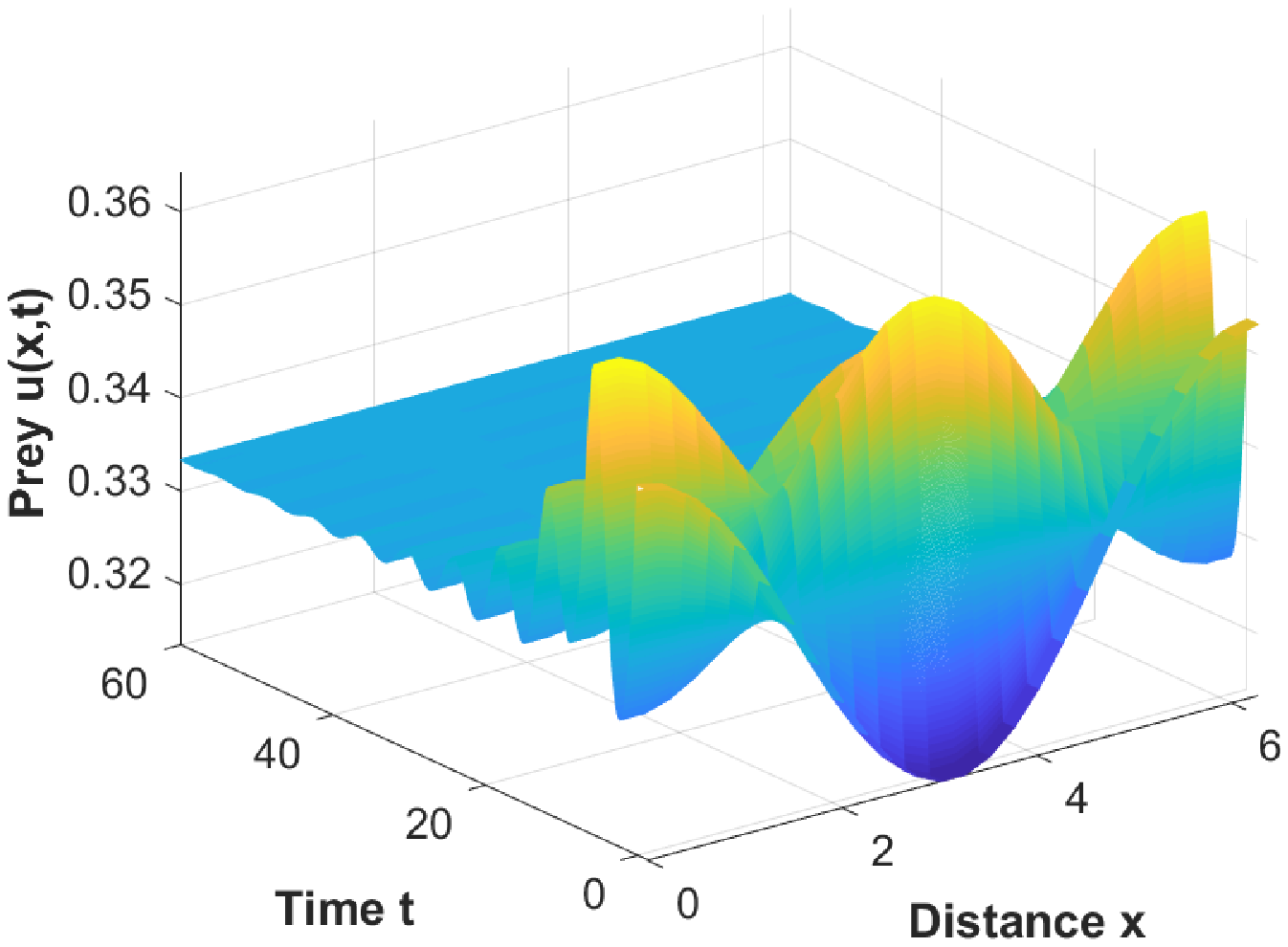}
\includegraphics[width=2in]{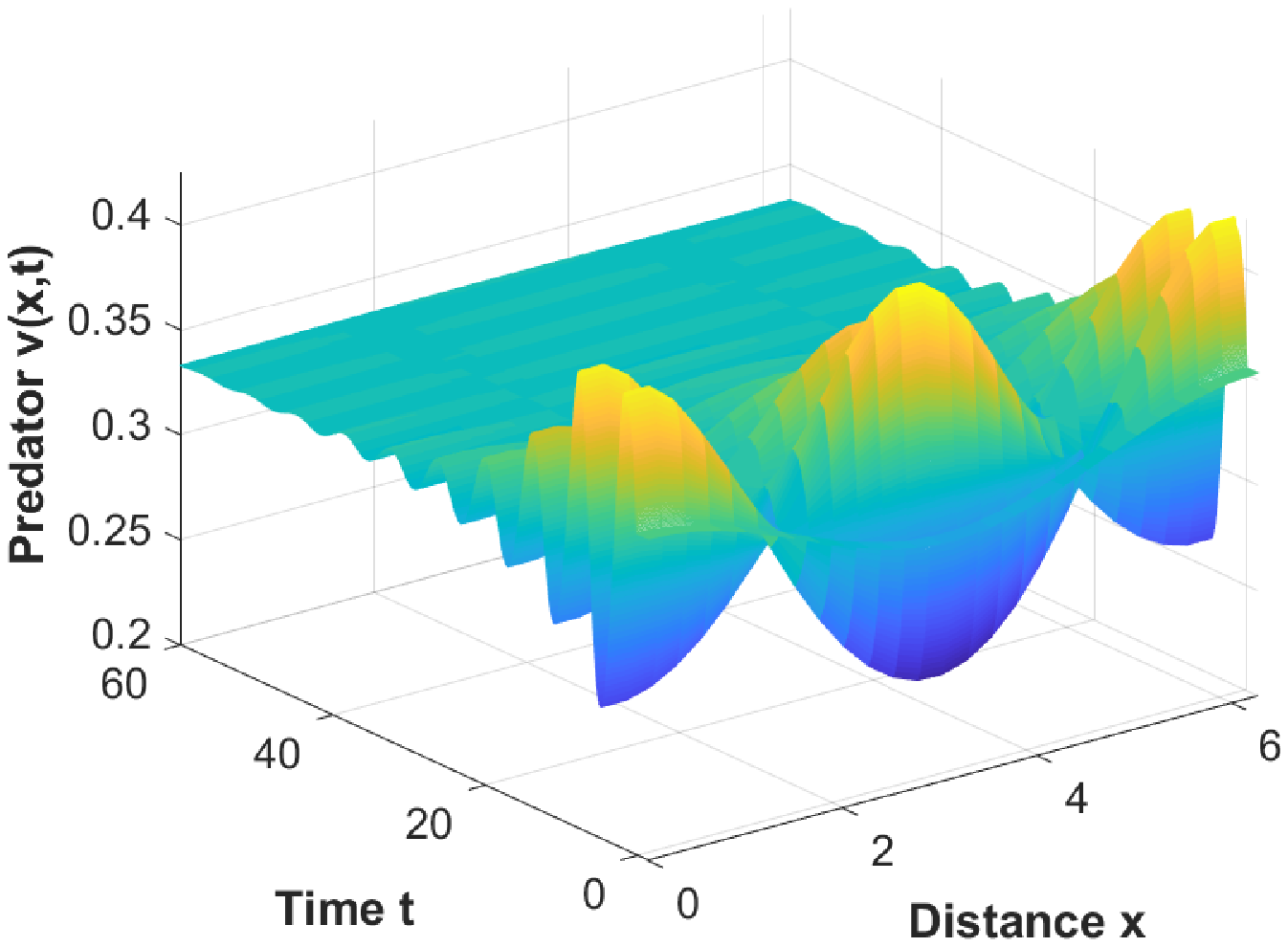} \\
\textbf{(a)} \hspace{4.5cm} \textbf{(b)} \\
\includegraphics[width=2in]{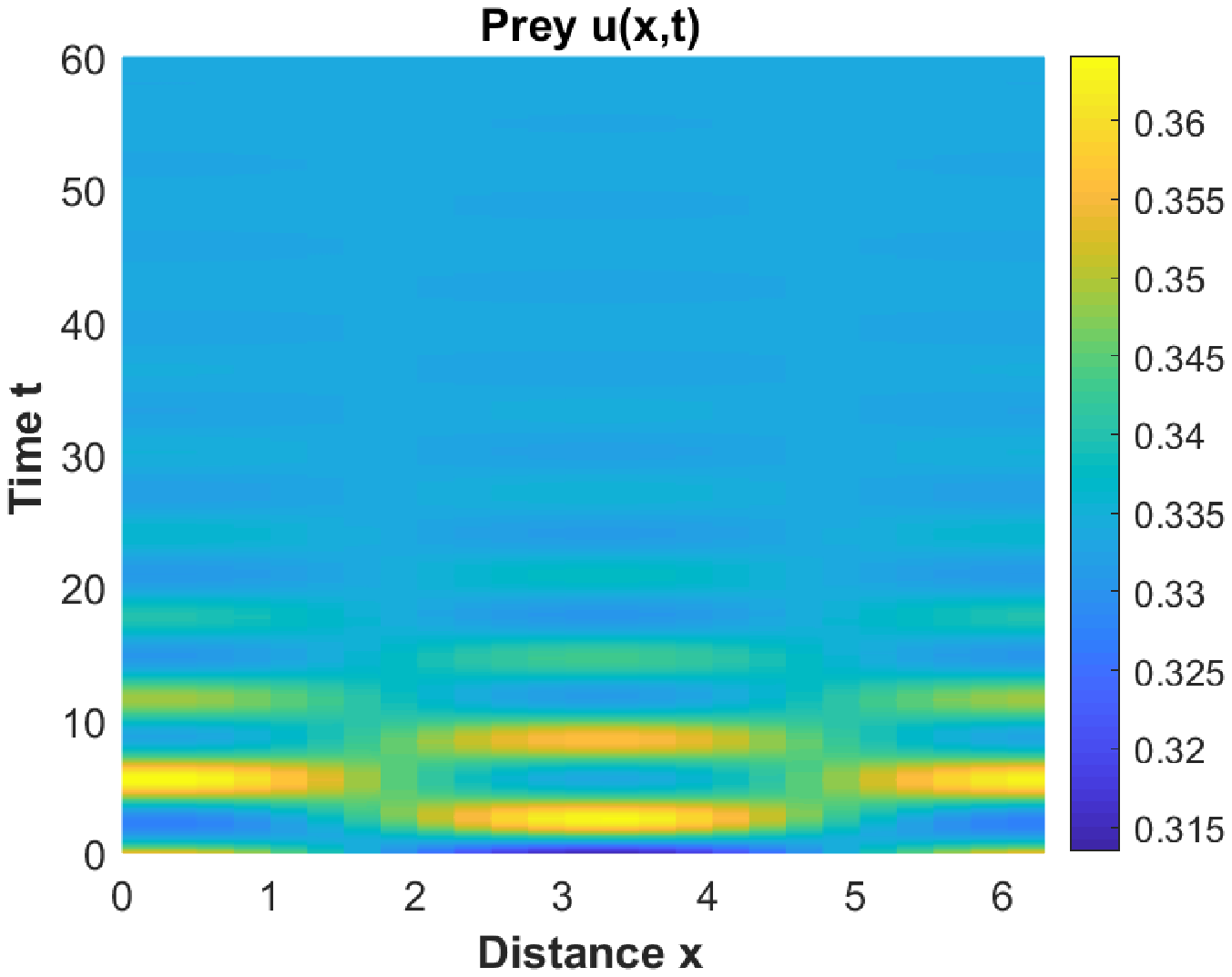}
\includegraphics[width=2in]{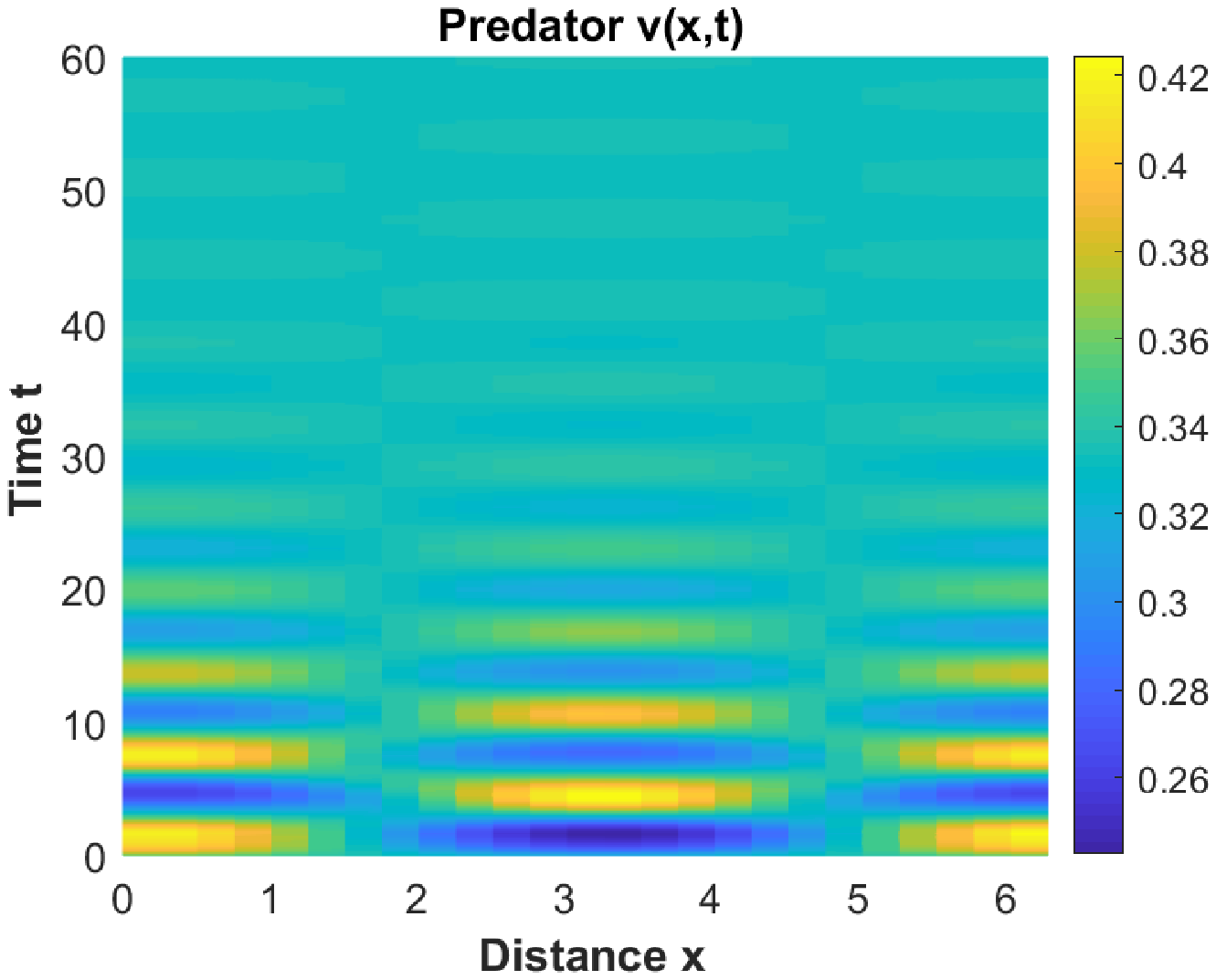} \\
\textbf{(c)} \hspace{4.5cm} \textbf{(d)} \\
\caption{For the parameters $d_{11}=0.6,~d_{22}=0.8,~m=0.5,~\gamma=0.5,~\beta=1,~\ell=2$, and $(d_{21},\tau)$ is chosen as the point $P_{3}(21,1.5)$ which satisfies $d_{21}^{(2)}<d_{21}<42.87$ and $\tau<\tau_{*}(d_{21}^{(2)})=2.5896$. There exists an unstable spatially inhomogeneous periodic solution, and its amplitude is decreasing. (a) and (b) are the evolution processes of the solutions $u(x,t)$ and $v(x,t)$ of system (4.2), respectively. (c) and (d) are spatio-temporal diagrams of the solutions $u(x,t)$ and $v(x,t)$ of system (4.2), respectively. The initial values are $u_{0}(x)=0.3333+0.02\cos(x),~v_{0}(x)=0.3333+0.02\cos(x)$.}
\label{fig:3}
\end{figure}

For $d_{21}=43$ which satisfies $42.87<d_{21}<150$, it follows from (4.17) and (4.18) that
\begin{eqnarray}
\tau_{3,0}=0.813<\tau_{2,0}=0.8138.
\end{eqnarray}

According to (4.23), we know that system (4.2) undergoes Hopf bifurcation at $\tau_{3,0}=0.813$. Furthermore, the direction and stability of the Hopf bifurcation can be determined by calculating $K_{1}K_{2}$ and $K_{2}$ using the procedures listed in Appendix A. By a direct calculation, we obtain
\begin{eqnarray*}
K_{1}=0.4024>0,~K_{2}=326.1951>0,~K_{1}K_{2}=131.2501>0,
\end{eqnarray*}
which implies that the spatially inhomogeneous Hopf bifurcation at $\tau_{3,0}$ is subcritical and unstable. When $d_{21}=43$ and $\tau=0.4$, by combining with (4.21) and (4.22), we can see that the point $P_{4}(43,0.4)$ satisfies $42.87<d_{21}<150$ and $\tau=0.4<\tau_{3,0}=0.813$. There exists an unstable spatially inhomogeneous periodic solution, and its amplitude is decreasing, see Fig.4 (a)-(d) for detail. The initial values are $u_{0}(x)=0.3333+0.02\cos(3x/2),~v_{0}(x)=0.3333+0.02\cos(3x/2)$.

\begin{figure}[!htbp]
\centering
\includegraphics[width=2in]{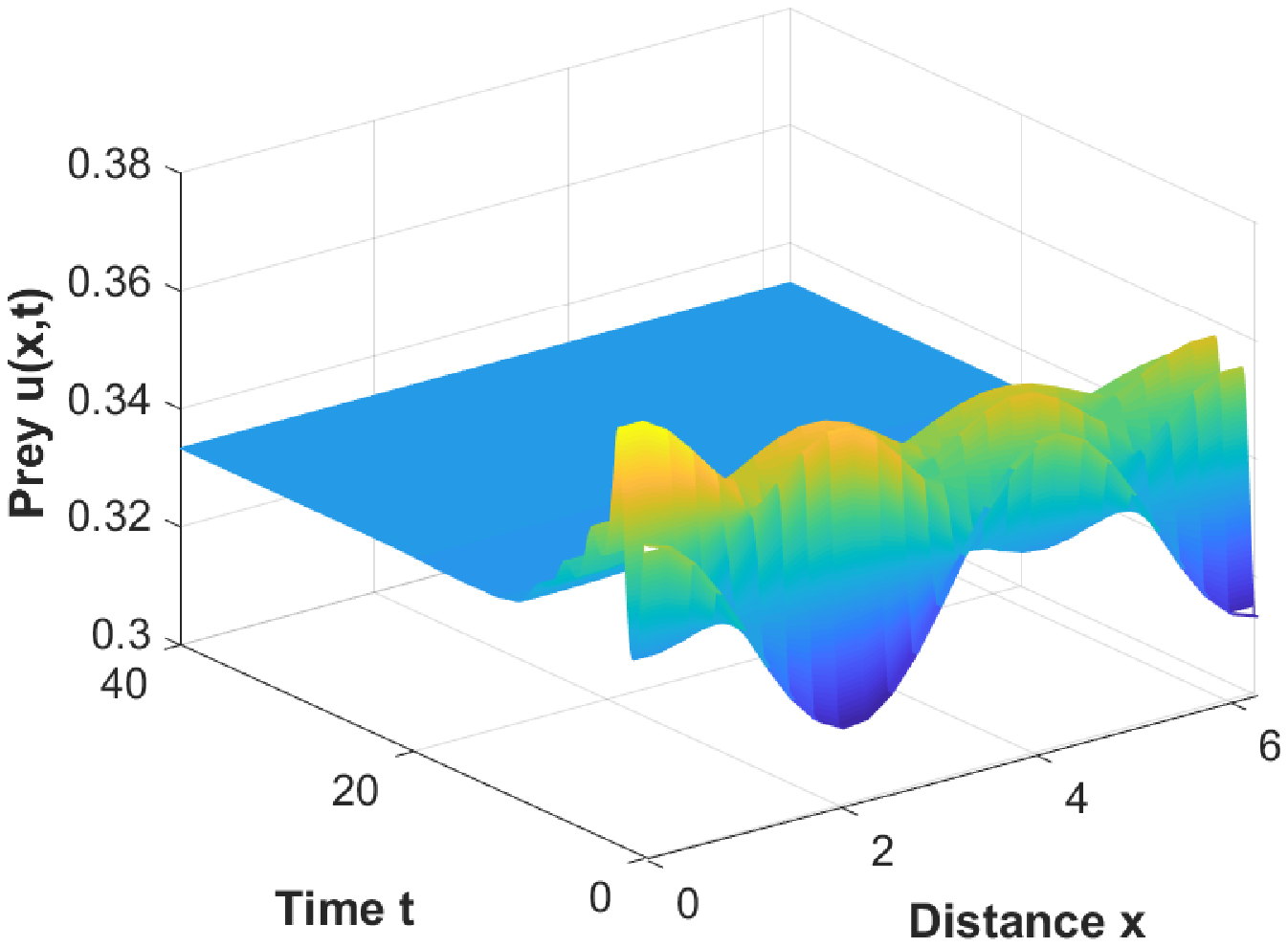}
\includegraphics[width=2in]{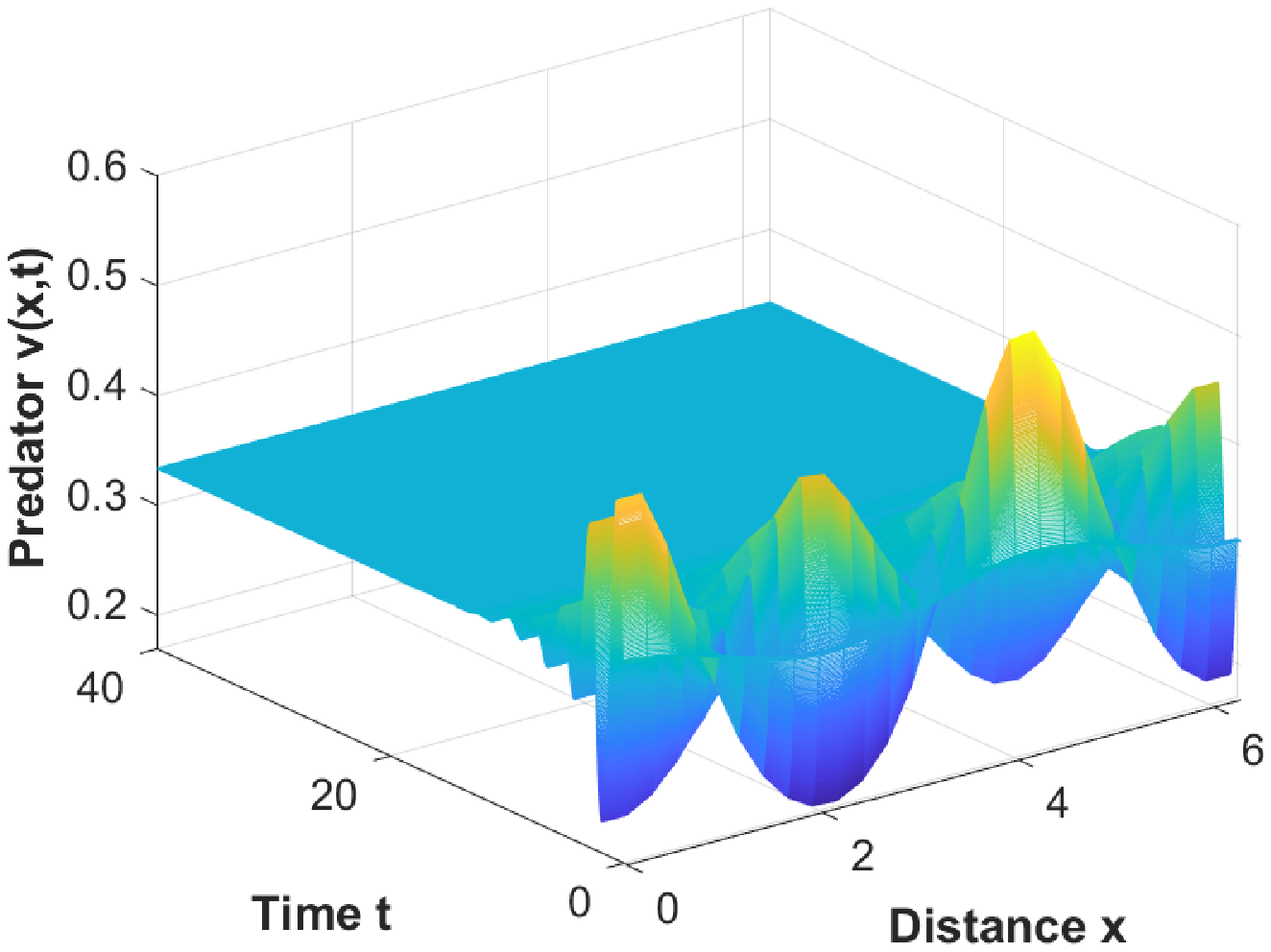} \\
\textbf{(a)} \hspace{4.5cm} \textbf{(b)} \\
\includegraphics[width=2in]{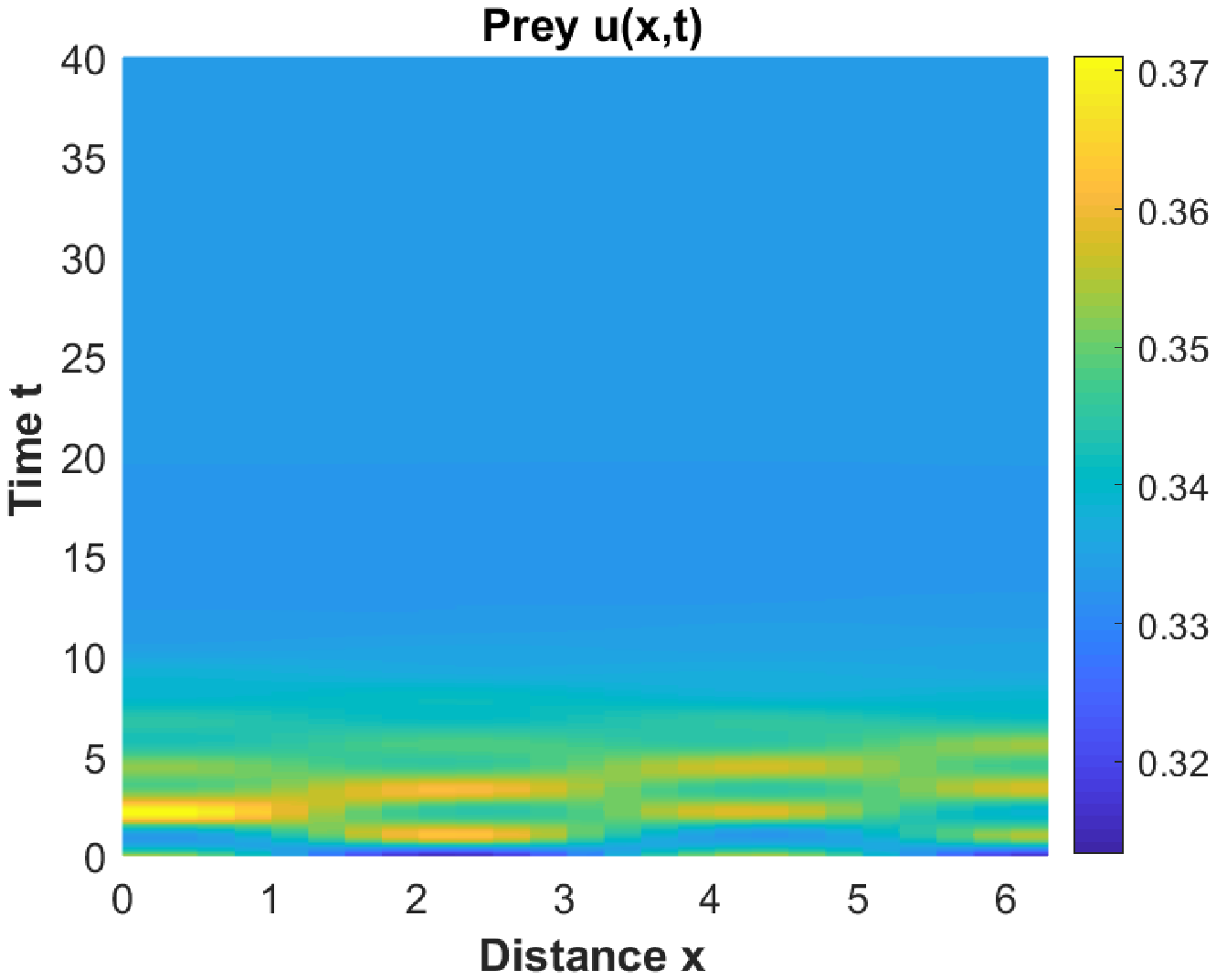}
\includegraphics[width=2in]{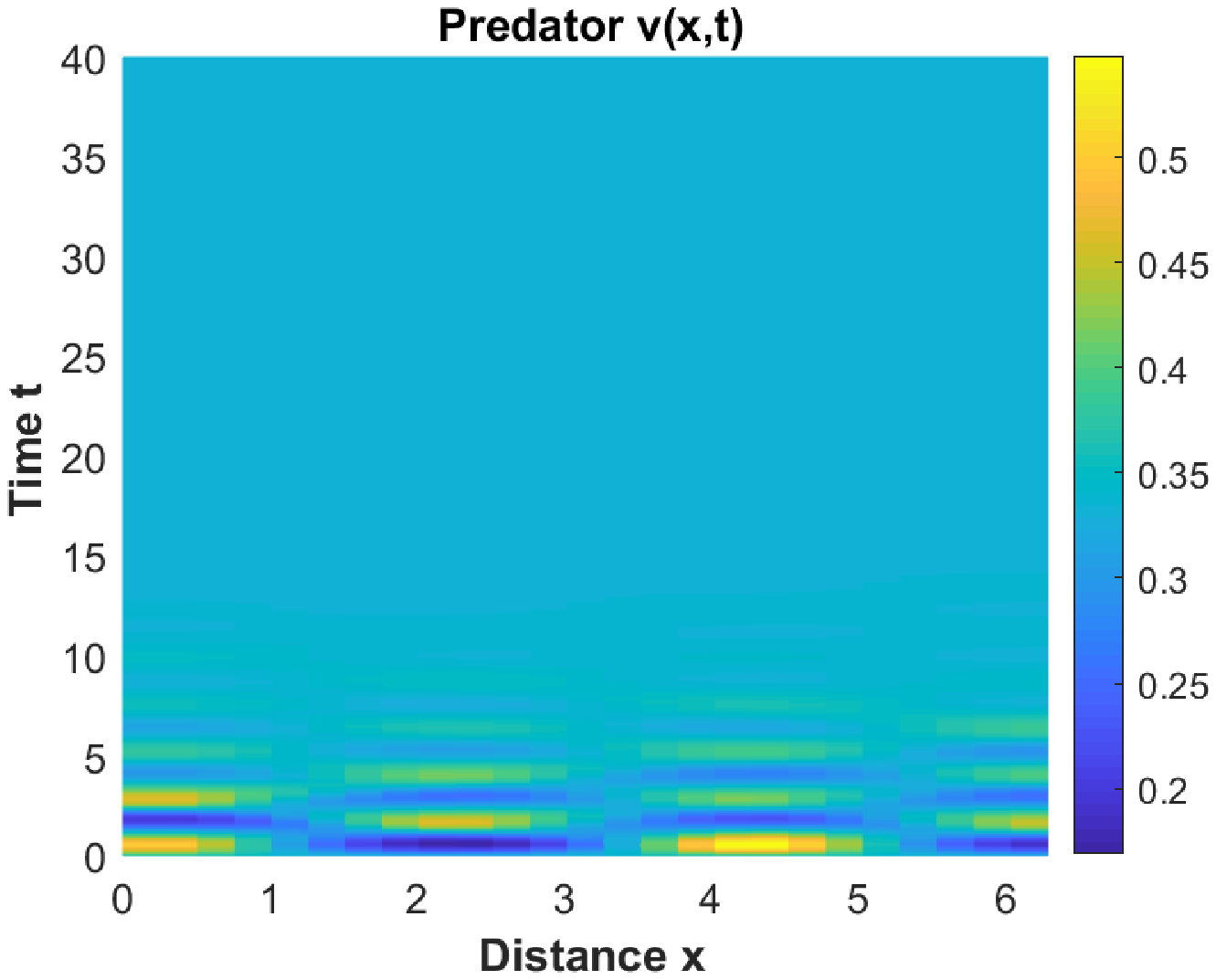} \\
\textbf{(c)} \hspace{4.5cm} \textbf{(d)} \\
\caption{For the parameters $d_{11}=0.6,~d_{22}=0.8,~m=0.5,~\gamma=0.5,~\beta=1,~\ell=2$, and $(d_{21},\tau)$ is chosen as the point $P_{4}(43,0.4)$ which satisfies $42.87<d_{21}<150$ and $\tau<\tau_{*}(d_{21}^{(3)})=0.813$. There exists an unstable spatially inhomogeneous periodic solution, and its amplitude is decreasing. (a) and (b) are the evolution processes of the solutions $u(x,t)$ and $v(x,t)$ of system (4.2), respectively. (c) and (d) are spatio-temporal diagrams of the solutions $u(x,t)$ and $v(x,t)$ of system (4.2), respectively. The initial values are $u_{0}(x)=0.3333+0.02\cos(3x/2),~v_{0}(x)=0.3333+0.02\cos(3x/2)$.}
\label{fig:4}
\end{figure}

\subsection{The case of with memory and gestation delays}

When system (4.1) includes memory and gestation delays, that is to say, in the model (1.3), we let
\begin{eqnarray*}\begin{aligned}
&f\left(u(x,t),v(x,t)\right)=u(x,t)\left(1-u(x,t)\right)-\frac{\beta u^{2}(x,t) v(x,t)}{u^{2}(x,t)+mv^{2}(x,t)}, \\
&g\left(u(x,t),v(x,t)\right)=\gamma v(x,t)\left(1-\frac{v(x,t)}{u(x,t-\tau)}\right).
\end{aligned}\end{eqnarray*}

Then the model (1.3) can be written as
\begin{eqnarray}
\left\{\begin{aligned}
&\frac{\partial u(x,t)}{\partial t}=d_{11}\Delta u(x,t)+u(x,t)\left(1-u(x,t)\right)-\frac{\beta u^{2}(x,t)v(x,t)}{u^{2}(x,t)+mv^{2}(x,t)}, & x\in(0,\ell\pi),~t>0, \\
&\frac{\partial v(x,t)}{\partial t}=d_{22}\Delta v(x,t)-d_{21}\left(v(x,t)u_{x}(x,t-\tau)\right)_{x}+\gamma v(x,t)\left(1-\frac{v(x,t)}{u(x,t-\tau)}\right), & x\in(0,\ell\pi),~t>0, \\
&u_{x}(0,t)=u_{x}(\ell\pi,t)=v_{x}(0,t)=v_{x}(\ell\pi,t)=0, & t \geq 0, \\
&u(x,t)=u_{0}(x,t),~v(x,t)=v_{0}(x,t), & x \in (0,\ell\pi),~-\tau \leq t \leq 0.
\end{aligned}\right.
\end{eqnarray}

Notice that for the system (4.24), the normal form for Hopf bifurcation can be calculated by using our developed algorithm in Section 2. In the following, we first give the stability and Hopf bifurcation analysis for the system (4.24), then by employing our developed procedure in Section 2 for calculating the normal form of Hopf bifurcation, the direction and stability of the Hopf bifurcation are determined.

\subsubsection{Stability and Hopf bifurcation analysis}

The system (4.24) has the positive constant steady state $E_{*}\left(u_{*},v_{*}\right)$, where
\begin{eqnarray}
u_{*}=v_{*}=1-\frac{\beta}{m+1}
\end{eqnarray}
with $0<\beta<m+1$. For $E_{*}\left(u_{*},v_{*}\right)$, form (2.4), when $m>1$, we have
\begin{eqnarray*}\begin{aligned}
&a_{11}=\frac{2\beta}{(m+1)^{2}}-1 \left\{
\begin{array}{cc}
\leq 0, & 0<\beta \leq \frac{(m+1)^{2}}{2}, \\
>0, & \beta>\frac{(m+1)^{2}}{2}.
\end{array}\right.
\end{aligned}\end{eqnarray*}
Notice that when $m>1$, if $a_{11}>0$, then we have $\beta>\frac{(m+1)^{2}}{2}>m+1$, which is contradict to the condition $0<\beta<m+1$. Thus, when $m>1$, $a_{11}\leq 0$ under the condition $0<\beta<m+1$. When $0<m<1$, we have
\begin{eqnarray}\begin{aligned}
&a_{11}=\frac{2\beta}{(m+1)^{2}}-1 \left\{
\begin{array}{cc}
\leq 0, & 0 < \beta \leq \frac{(m+1)^{2}}{2}, \\
>0, & \frac{(m+1)^{2}}{2}<\beta<m+1.
\end{array}\right.
\end{aligned}\end{eqnarray}
Figure 5 shows the curves $f_{1}=m+1$ and $f_{2}=(m+1)^{2}/2$ for $0\leq m\leq 3$, and they intersect at the point $P(1,2)$.

\begin{figure}[!htbp]
\centering
\includegraphics[width=2.5in]{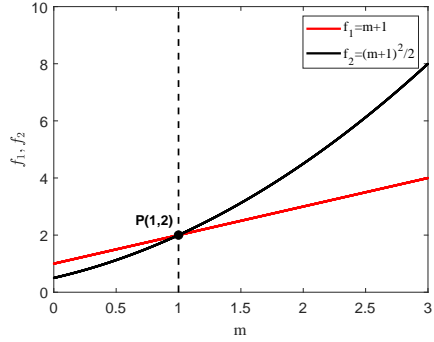} \\
\caption{The curves $f_{1}=m+1$ and $f_{2}=(m+1)^{2}/2$ for $0\leq m\leq 3$. The intersection point is $P(1,2)$.}
\label{fig:5}
\end{figure}

Furthermore, we have
\begin{eqnarray}\begin{aligned}
&a_{12}=\frac{\beta(m-1)}{(m+1)^{2}}\left\{
\begin{array}{cc}
\leq 0, & 0 < m \leq 1, \\
>0, & m>1,
\end{array}\right.~a_{21}=0,~a_{22}=-\gamma<0, \\
&b_{11}=0,~b_{12}=0,~b_{21}=\gamma>0,~b_{22}=0.
\end{aligned}\end{eqnarray}

Moreover, by combining with (4.26), (4.27),
\begin{eqnarray*}
D_{1}=\left(\begin{array}{cc}
d_{11} & 0 \\
0 & d_{22}
\end{array}\right),~D_{2}=\left(\begin{array}{cc}
0 & 0 \\
-d_{21} v_{*} & 0
\end{array}\right),~A_{1}=\left(\begin{array}{cc}
\frac{2 \beta}{(m+1)^{2}}-1 & \frac{\beta(m-1)}{(m+1)^{2}} \\
0 & -\gamma
\end{array}\right),~A_{2}=\left(\begin{array}{cc}
0 & 0 \\
\gamma & 0
\end{array}\right)
\end{eqnarray*}
and
\begin{eqnarray*}
M_{n}(\lambda)=\lambda I_{2}+\frac{n^{2}}{\ell^{2}}D_{1}+\frac{n^{2}}{\ell^{2}}e^{-\lambda \tau}D_{2}-A_{1}-A_{2}e^{-\lambda \tau},
\end{eqnarray*}
or according to (2.7), the characteristic equation of system (4.24) can be written as
\begin{eqnarray}
\Gamma_{n}(\lambda)=\det\left(M_{n}(\lambda)\right)=\lambda^{2}-T_{n}\lambda+\widetilde{J}_{n}(\tau)=0,
\end{eqnarray}
where
\begin{eqnarray}\begin{aligned}
T_{n}&=(a_{11}+a_{22})-(d_{11}+d_{22})\frac{n^{2}}{\ell^{2}}, \\
\widetilde{J}_{n}(\tau)&=d_{11}d_{22}\frac{n^{4}}{\ell^{4}}-\left(d_{11}a_{22}+d_{22}a_{11}+d_{21}a_{12}v_{*}e^{-\lambda \tau}\right)\frac{n^{2}}{\ell^{2}}+a_{11}a_{22}-a_{12}b_{21}e^{-\lambda\tau}.
\end{aligned}\end{eqnarray}

Notice that when $\tau=0$, the characteristic equation (4.28) becomes
\begin{eqnarray}
\lambda^{2}-T_{n} \lambda+\widetilde{J}_{n}(0)=0,
\end{eqnarray}
where $\widetilde{J}_{n}(0)$ is defined by
\begin{eqnarray*}
\widetilde{J}_{n}(0)=d_{11}d_{22}\frac{n^{4}}{\ell^{4}}-\left(d_{11}a_{22}+d_{22}a_{11}+d_{21}a_{12}v_{*}\right)\frac{n^{2}}{\ell^{2}}+a_{11}a_{22}-a_{12}b_{21}.
\end{eqnarray*}

A set of sufficient and necessary condition that all roots of (4.30) have a negative real part is $T_{n}<0,~\widetilde{J}_{n}(0)>0$, which is always holds provided that $a_{11}<0$ and $a_{12}<0$, i.e.,
\begin{eqnarray*}
(C_{0}):~0<\beta<\frac{(m+1)^{2}}{2},~0<m<1.
\end{eqnarray*}

This implies that when $\tau=0$ and the condition $(C_{0})$ holds, the positive steady state $E_{*}(u_{*},v_{*})$ is asymptotically stable for $d_{11} \geq 0$, $d_{21} \geq 0$ and $d_{22} \geq 0$. Meanwhile, if we let $d_{21}=0$, then we have
\begin{eqnarray*}
\widetilde{J}_{n}:=d_{11}d_{22}\frac{n^{4}}{\ell^{4}}-\left(d_{11}a_{22}+d_{22}a_{11}\right)\frac{n^{2}}{\ell^{2}}+a_{11}a_{22}-a_{12}b_{21}.
\end{eqnarray*}

It is easy to verify that $T_{n}<0$ and $\widetilde{J}_{n}>0$ provided that the condition $(C_{0})$ holds. This implies that when $d_{21}=0$, $\tau=0$ and the condition $(C_{0})$ holds, the positive steady state $E_{*}(u_{*},v_{*})$ is asymptotically stable for $d_{11} \geq 0$ and $d_{22} \geq 0$. Furthermore, since $\Gamma_{n}(0)=\widetilde{J}_{n}(0)>0$ under the condition $(C_{0})$, this implies that $\lambda=0$ is not a root of (4.28).

In the following, we let
\begin{eqnarray}\begin{aligned}
J_{n}=d_{11}d_{22}\frac{n^{4}}{\ell^{4}}-\left(d_{11}a_{22}+d_{22}a_{11}\right)\frac{n^{2}}{\ell^{2}}+a_{11}a_{22}.
\end{aligned}\end{eqnarray}
Furthermore, let $\lambda=i \omega_{n}(\omega_{n}>0)$ be a root of (4.28). By substituting it along with expressions in (4.26) and (4.27) into (4.28), and separating the real part from the imaginary part, we have
\begin{eqnarray}
\left\{\begin{aligned}
&J_{n}-\omega_{n}^{2}=\left(d_{21}a_{12}v_{*}\frac{n^{2}}{\ell^{2}}+a_{12}b_{21}\right)\cos(\omega_{n}\tau), \\
&T_{n}\omega_{n}=\left(d_{21}a_{12}v_{*}\frac{n^{2}}{\ell^{2}}+a_{12}b_{21}\right)\sin(\omega_{n}\tau),
\end{aligned}\right.
\end{eqnarray}
which yields
\begin{eqnarray}
\omega_{n}^{4}+P_{n}\omega_{n}^{2}+Q_{n}=0,
\end{eqnarray}
where
\begin{eqnarray}\begin{aligned}
P_{n}&=T_{n}^{2}-2J_{n} \\
&=\left(d_{11}^{2}+d_{22}^{2}\right)\frac{n^{4}}{\ell^{4}}-2\left(d_{11}a_{11}+d_{22}a_{22}\right)\frac{n^{2}}{\ell^{2}}+a_{11}^{2}+a_{22}^{2},
\end{aligned}\end{eqnarray}
and
\begin{eqnarray}
Q_{n}=\left(J_{n}+\left(d_{21}a_{12}v_{*}\frac{n^{2}}{\ell^{2}}+a_{12}b_{21}\right)\right)\left(J_{n}-\left(d_{21} a_{12}v_{*}\frac{n^{2}}{\ell^{2}}+a_{12}b_{21}\right)\right).
\end{eqnarray}

It is easy to verify that $P_{n}>0$ for any $n \in \mathbb{N}_{0}$. Thus, (4.33) has one positive root when $Q_{n}<0$. In the following, we will discuss several cases under the condition $(C_{0})$, which are used to guarantee $Q_{n}<0$.

When $\tau>0$, according to (4.31) and (4.35), we can define $Q_{n}=\Gamma_{n}(0)\widetilde{Q}_{n}$ with
\begin{eqnarray*}
\Gamma_{n}(0)=\widetilde{J}_{n}(0)=d_{11}d_{22}\frac{n^{4}}{\ell^{4}}-\left(d_{11}a_{22}+d_{22}a_{11}+d_{21}a_{12}v_{*}\right)\frac{n^{2}}{\ell^{2}}+a_{11}a_{22}-a_{12}b_{21}
\end{eqnarray*}
and
\begin{eqnarray}
\widetilde{Q}_{n}=d_{11}d_{22}\frac{n^{4}}{\ell^{4}}-\left(d_{11}a_{22}+d_{22}a_{11}-d_{21}a_{12}v_{*}\right)\frac{n^{2}}{\ell^{2}}+a_{11}a_{22}+a_{12}b_{21}=0,
\end{eqnarray}
and then by a simple analysis, we have $\Gamma_{n}(0)=\widetilde{J}_{n}(0)>0$ for any $n \in \mathbb{N}_{0}$. Therefore, the sign of $Q_{n}$ coincides with that of $\widetilde{Q}_{n}$, and in order to guaranteeing $Q_{n}<0$, we only need to study the case of $\widetilde{Q}_{n}<0$.

\begin{case}
It is easy to see that if the conditions $(C_{0})$ and
\begin{eqnarray*}
(C_{1}):~d_{11}a_{22}+d_{22}a_{11}-d_{21}a_{12}v_{*}<0,~a_{11}a_{22}+a_{12}b_{21}>0
\end{eqnarray*}
or
\begin{eqnarray*}
(C_{11}):~\left(d_{11}a_{22}+d_{22}a_{11}-d_{21}a_{12}v_{*}\right)^{2}-4d_{11}d_{22}\left(a_{11}a_{22}+a_{12}b_{21}\right)<0
\end{eqnarray*}
holds, then (4.36) has no positive roots. Hence, all roots of (4.28) have negative real parts when $\tau \in[0,+\infty)$ under the conditions $(C_{0})$ and $(C_{1})$ or $(C_{11})$.
\end{case}

\begin{case}
If the conditions $(C_{0})$ and
\begin{eqnarray*}
(C_{2}):~a_{11}a_{22}+a_{12}b_{21}<0
\end{eqnarray*}
or
\begin{eqnarray*}\begin{aligned}
&(C_{21}):~d_{11}a_{22}+d_{22}a_{11}-d_{21}a_{12}v_{*}>0, \\
&\left(d_{11}a_{22}+d_{22}a_{11}-d_{21}a_{12}v_{*}\right)^{2}-4d_{11}d_{22}\left(a_{11}a_{22}+a_{12}b_{21}\right)=0
\end{aligned}\end{eqnarray*}
hold, then (4.36) has a positive root. Notice that when $d_{11}a_{22}+d_{22}a_{11}-d_{21}a_{12}v_{*}>0$, $\widetilde{Q}_{n}\geq0$ for all $n\in \mathbb{N}_{0}$ and we only need to study the case of $\widetilde{Q}_{n}<0$, thus for the case 4.5, we only consider the condition $(C_{2})$. Moreover, if we let $\widetilde{x}=n^{2}/\ell^{2}$, then the mathematical expression of $\widetilde{Q}_{n}$ can be rewritten as
\begin{eqnarray}
\widetilde{f}(\widetilde{x})=d_{11}d_{22}\widetilde{x}^{2}-(d_{11}a_{22}+d_{22}a_{11}-d_{21}a_{12}v_{*})\widetilde{x}+a_{11}a_{22}+a_{12}b_{21},
\end{eqnarray}
and the unique positive root of this equation is
\begin{eqnarray}
\widetilde{x}_{*}=\frac{d_{11}a_{22}+d_{22}a_{11}-d_{21}v_{*}a_{12}+\sqrt{(d_{11}a_{22}+d_{22}a_{11}-d_{21}v_{*}a_{12})^{2}-4d_{11}d_{22}(a_{11}a_{22}+a_{12}b_{21})}}{2d_{11}d_{22}}
\end{eqnarray}
under the conditions $(C_{0})$ and $(C_{2})$ or $(C_{21})$. Since $\widetilde{x}_{*}=n^{2}/\ell^{2}$, then $n_{0}=\ell\sqrt{\widetilde{x}_{*}}$, and notice that $\widetilde{Q}_{n}$ is a quadratic polynomial with respect to $n^{2}/\ell^{2}$ and $\widetilde{Q}_{0}\leq 0$ under the condition $(C_{2})$. Thus, when the condition $(C_{2})$ holds, we can conclude that there exists $n_{0}>0$ such that $\widetilde{Q}_{n_{0}}=0$ and
\begin{eqnarray}
Q_{n}=\Gamma_{n}(0)\widetilde{Q}_{n}\left\{\begin{aligned}
&<0, & 0 \leq n \leq n_{*}, \\
&\geq 0, & n \geq n_{*}+1,
\end{aligned}
\right.\end{eqnarray}
where $n \in \mathbb{N}_{0}$, and $n_{*}$ is defined by
\begin{eqnarray}
n_{*}=\left\{\begin{aligned}
& n_{0}-1, & n_{0} \in \mathbb{N}, \\
& \left[n_{0}\right], & n_{0} \notin \mathbb{N}.
\end{aligned}\right.
\end{eqnarray}
Here, $\left[.\right]$ stands for the integer part function. Therefore, (4.33) has one positive root $\omega_{n}$ for $0 \leq n \leq n_{*}$ with $n\in \mathbb{N}_{0}$, where
\begin{eqnarray}
\omega_{n}=\sqrt{\frac{-P_{n}+\sqrt{P_{n}^{2}-4Q_{n}}}{2}}.
\end{eqnarray}
By combining with (4.32), and notice that $a_{12}<0$, $T_{n}<0$ under the condition $(C_{0})$, then we have
\begin{eqnarray*}
\sin(\omega_{n} \tau)=\frac{T_{n}\omega_{n}}{d_{21}a_{12}v_{*}(n^{2}/\ell^{2})+a_{12}b_{21}}>0.
\end{eqnarray*}
Thus, from the first mathematical expression in (4.32), we can set
\begin{eqnarray}
\tau_{n,j}=\frac{1}{\omega_{n}}\left\{\arccos \left\{\frac{J_{n}-\omega_{n}^{2}}{d_{21}a_{12}v_{*}(n^{2}/\ell^{2})+a_{12}b_{21}}\right\}+2j\pi\right\},~n \in \mathbb{N}_{0},~j \in \mathbb{N}_{0}.
\end{eqnarray}
\end{case}

\begin{case}
If the conditions $(C_{0})$ and
\begin{eqnarray*}\begin{aligned}
&(C_{3}):~d_{11}a_{22}+d_{22}a_{11}-d_{21}a_{12}v_{*}>0,~a_{11}a_{22}+a_{12}b_{21}>0, \\
&\left(d_{11}a_{22}+d_{22}a_{11}-d_{21}a_{12}v_{*}\right)^{2}-4d_{11}d_{22}\left(a_{11}a_{22}+a_{12}b_{21}\right)>0
\end{aligned}\end{eqnarray*}
hold, then the (4.36) has two positive roots. Without loss of generality, we assume that the two positive roots of (4.37) are $\widetilde{x}_{1}$ and $\widetilde{x}_{2}$, i.e.,
\begin{eqnarray*}
\widetilde{x}_{1,2}=\frac{d_{11}a_{22}+d_{22}a_{11}-d_{21}v_{*}a_{12}\mp\sqrt{(d_{11}a_{22}+d_{22}a_{11}-d_{21}v_{*}a_{12})^{2}-4d_{11}d_{22}(a_{11}a_{22}+a_{12}b_{21})}}{2d_{11}d_{22}}
\end{eqnarray*}
under the conditions $(C_{0})$ and $(C_{3})$. Since $\widetilde{x}_{1}=n_{1}^{2}/\ell^{2}$ and $\widetilde{x}_{2}=n_{2}^{2}/\ell^{2}$, then $n_{1}=\ell\sqrt{\widetilde{x}_{1}}$ and $n_{2}=\ell\sqrt{\widetilde{x}_{2}}$. By using a geometric argument, we can conclude that
\begin{eqnarray*}
Q_{n}=\Gamma_{n}(0)\widetilde{Q}_{n}\left\{\begin{array}{cc}
<0, & n_{1}<n<n_{2}, \\
\geq 0, & n\leq n_{1} \text{ or } n \geq n_{2},
\end{array}
\right.\end{eqnarray*}
where $n \in \mathbb{N}_{0}$. Therefore, (4.33) has one positive root $\omega_{n}^{+}$ for $n_{1}<n<n_{2}$ with $n\in \mathbb{N}_{0}$, where
\begin{eqnarray*}
\omega_{n}^{+}=\sqrt{\frac{-P_{n}+\sqrt{P_{n}^{2}-4Q_{n}}}{2}}.
\end{eqnarray*}
Furthermore, by combining with the second mathematical expression in (4.32), and notice that $a_{12}<0$, $T_{n}<0$ under the condition $(C_{0})$, then we have $\sin(\omega_{n}^{+}\tau)>0$. Thus, from the first mathematical expression in (4.32), we can set
\begin{eqnarray*}
\tau_{n,j}^{+}=\frac{1}{\omega_{n}^{+}}\left\{\arccos\left\{\frac{J_{n}-(\omega_{n}^{+})^{2}}{d_{21}a_{12}v_{*}(n^{2}/\ell^{2})+a_{12}b_{21}}\right\}+2j\pi\right\},~n \in \mathbb{N}_{0},~j \in \mathbb{N}_{0}.
\end{eqnarray*}
\end{case}

Next, we continue to verify the transversality conditions for the Cases 4.5 and 4.6.
\begin{lemma}
Suppose that the conditions $(C_{0})$ and $(C_{2})$ hold, and $0 \leq n \leq n_{*}$ with $n \in \mathbb{N}_{0}$, then we have
\begin{eqnarray*}
\left.\frac{d\operatorname{Re}(\lambda(\tau))}{d\tau}\right|_{\tau=\tau_{n,j}}>0,
\end{eqnarray*}
where $\operatorname{Re}(\lambda(\tau))$ represents the real part of $\lambda(\tau)$.
\end{lemma}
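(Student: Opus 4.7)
The plan is to apply the standard implicit-differentiation trick, then reduce the sign of the transversality quantity to the sign of a quantity already controlled by the case analysis that produced the critical values $\omega_n$ and $\tau_{n,j}$.

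First I would treat $\lambda$ as an implicit function of $\tau$ through the characteristic equation (4.28) and differentiate it with respect to $\tau$. Because only the $e^{-\lambda\tau}$-factor of the coefficient $M_n:=d_{21}a_{12}v_{*}(n^{2}/\ell^{2})+a_{12}b_{21}$ depends on $\tau$ explicitly, solving for $d\lambda/d\tau$ and then inverting gives the compact identity
\begin{equation*}
\left(\frac{d\lambda}{d\tau}\right)^{-1}
=\frac{2\lambda-T_{n}+M_{n}\tau e^{-\lambda\tau}}{-M_{n}\lambda e^{-\lambda\tau}}
=-\frac{2\lambda-T_{n}}{\lambda\bigl(\lambda^{2}-T_{n}\lambda+J_{n}\bigr)}-\frac{\tau}{\lambda},
\end{equation*}
where in the second equality I use the characteristic equation (4.28) to replace $M_{n}e^{-\lambda\tau}$ by $\lambda^{2}-T_{n}\lambda+J_{n}$. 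Since the sign of $d\mathrm{Re}(\lambda)/d\tau$ agrees with the sign of $\mathrm{Re}(d\lambda/d\tau)^{-1}$, it suffices to show the real part of the right-hand side is strictly positive at $\lambda=i\omega_{n}$, $\tau=\tau_{n,j}$.

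Next I would substitute $\lambda=i\omega_{n}$. The term $-\tau/\lambda=i\tau/\omega_{n}$ is purely imaginary, so it drops out. Writing $\lambda^{2}-T_{n}\lambda+J_{n}=(J_{n}-\omega_{n}^{2})-iT_{n}\omega_{n}$ and multiplying numerator and denominator by the complex conjugate, a short computation yields
\begin{equation*}
\mathrm{Re}\!\left(\frac{d\lambda}{d\tau}\right)^{-1}\!\bigg|_{\tau=\tau_{n,j}}
=\frac{T_{n}^{2}-2(J_{n}-\omega_{n}^{2})}{T_{n}^{2}\omega_{n}^{2}+(J_{n}-\omega_{n}^{2})^{2}}
=\frac{P_{n}+2\omega_{n}^{2}}{T_{n}^{2}\omega_{n}^{2}+(J_{n}-\omega_{n}^{2})^{2}},
\end{equation*}
where $P_{n}=T_{n}^{2}-2J_{n}$ as in (4.34). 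The denominator is strictly positive, so the sign question reduces to the sign of $P_{n}+2\omega_{n}^{2}$.

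Finally, from $\omega_{n}^{2}=\bigl(-P_{n}+\sqrt{P_{n}^{2}-4Q_{n}}\bigr)/2$ (formula (4.41) in Case 4.5), one gets the clean identity $P_{n}+2\omega_{n}^{2}=\sqrt{P_{n}^{2}-4Q_{n}}$. Under the assumptions $(C_{0})$ and $(C_{2})$ combined with the range $0\le n\le n_{*}$, the quantity $Q_{n}$ is strictly negative by (4.39), so $P_{n}^{2}-4Q_{n}>P_{n}^{2}\ge 0$, whence $P_{n}+2\omega_{n}^{2}>0$. This gives $\mathrm{Re}(d\lambda/d\tau)^{-1}|_{\tau=\tau_{n,j}}>0$ and therefore $d\mathrm{Re}(\lambda(\tau))/d\tau|_{\tau=\tau_{n,j}}>0$. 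The only real obstacle is bookkeeping: keeping the signs straight when inverting and conjugating, and recognizing the simplification $P_{n}+2\omega_{n}^{2}=\sqrt{P_{n}^{2}-4Q_{n}}$ so that Case 4.5's sign information on $Q_{n}$ is directly usable.
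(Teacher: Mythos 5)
Your proposal is correct and follows essentially the same route as the paper: implicit differentiation of the characteristic equation, taking the real part of $(d\lambda/d\tau)^{-1}$ at $\lambda=i\omega_{n}$, and reducing the sign to $T_{n}^{2}-2(J_{n}-\omega_{n}^{2})=\sqrt{P_{n}^{2}-4Q_{n}}>0$ via $Q_{n}<0$. The only cosmetic difference is that you eliminate $e^{-\lambda\tau}$ using the characteristic equation before taking real parts, whereas the paper keeps the exponential and substitutes the explicit expressions for $\cos(\omega_{n}\tau_{n,j})$ and $\sin(\omega_{n}\tau_{n,j})$ from (4.32); the resulting computation is identical.
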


\begin{proof}
By differentiating the two sides of
\begin{eqnarray*}
\Gamma_{n}(\lambda)=\operatorname{det}\left(M_{n}(\lambda)\right)=\lambda^{2}-T_{n}\lambda+\widetilde{J}_{n}(\tau)=0
\end{eqnarray*}
with respect to $\tau$, where $T_{n}$ and $\widetilde{J}_{n}(\tau)$ are defined by (4.29), we have
\begin{eqnarray}
\left(\frac{d\lambda(\tau)}{d\tau}\right)^{-1}=\frac{(2\lambda-T_{n})e^{\lambda \tau}}{-\lambda\left( d_{21}a_{12}v_{*}(n^{2}/\ell^{2})+a_{12}b_{21}\right)}-\frac{\tau}{\lambda}.
\end{eqnarray}

Therefore, by (4.43), we have
\begin{eqnarray}\begin{aligned}
\operatorname{Re}\left(\left.\frac{d \lambda(\tau)}{d \tau}\right|_{\tau=\tau_{n,j}}\right)^{-1}&=\operatorname{Re}\left(\frac{(2i\omega_{n}-T_{n})e^{i\omega_{n} \tau_{n,j}}}{-i\omega_{n}\left( d_{21}a_{12}v_{*}(n^{2}/\ell^{2})+a_{12}b_{21}\right)}\right) \\
&=\operatorname{Re}\left(\frac{(2i\omega_{n}-T_{n})\left(\cos(\omega_{n}\tau_{n,j})+i\sin(\omega_{n}\tau_{n,j})\right) }{-i\omega_{n}\left(d_{21}a_{12}v_{*}(n^{2}/\ell^{2})+a_{12}b_{21}\right)}\right) \\
&=\operatorname{Re}\left(\frac{(2i\omega_{n}-T_{n})\cos(\omega_{n}\tau_{n,j})}{-i\omega_{n}\left(d_{21}a_{12}v_{*}(n^{2}/\ell^{2})+a_{12}b_{21}\right)}+\frac{i(2i\omega_{n}-T_{n})\sin(\omega_{n}\tau_{n,j})}{-i\omega_{n}\left(d_{21}a_{12}v_{*}(n^{2}/\ell^{2})+a_{12}b_{21}\right)} \right) \\
&=\frac{T_{n}\sin(\omega_{n}\tau_{n,j})}{\omega_{n}\left(d_{21}a_{12}v_{*}(n^{2}/\ell^{2})+a_{12}b_{21}\right)}-\frac{2\cos(\omega_{n}\tau_{n,j})}{\left(d_{21}a_{12}v_{*}(n^{2}/\ell^{2})+a_{12}b_{21}\right)}.
\end{aligned}\end{eqnarray}
Furthermore, according to (4.32), we have
\begin{eqnarray}
\sin(\omega_{n} \tau_{n,j})=\frac{T_{n}\omega_{n}}{\left(d_{21}a_{12}v_{*}(n^{2}/\ell^{2})+a_{12}b_{21}\right)},~\cos(\omega_{n} \tau_{n,j})=\frac{J_{n}-\omega_{n}^{2}}{\left(d_{21}a_{12}v_{*}(n^{2}/\ell^{2})+a_{12}b_{21}\right)}.
\end{eqnarray}
Moreover, by combining with (4.44), (4.45) and
\begin{eqnarray*}
\omega_{n}=\sqrt{\frac{-P_{n}+\sqrt{P_{n}^{2}-4Q_{n}}}{2}}>0,~a_{12}<0,~P_{n}=T_{n}^{2}-2J_{n}>0,~Q_{n}<0,
\end{eqnarray*}
we have
\begin{eqnarray*}\begin{aligned}
\operatorname{Re}\left(\left.\frac{d \lambda(\tau)}{d \tau}\right|_{\tau=\tau_{n,j}}\right)^{-1}&=\frac{T_{n}\sin(\omega_{n}\tau_{n,j})}{\omega_{n}\left(d_{21}a_{12}v_{*}(n^{2}/\ell^{2})+a_{12}b_{21}\right)}-\frac{2\cos(\omega_{n}\tau_{n,j})}{\left(d_{21}a_{12}v_{*}(n^{2}/\ell^{2})+a_{12}b_{21}\right)} \\
&=\frac{T_{n}^{2}-2(J_{n}-\omega_{n}^{2})}{\left(d_{21}a_{12}v_{*}(n^{2}/\ell^{2})+a_{12}b_{21}\right)^{2}}=\frac{\sqrt{P_{n}^{2}-4Q_{n}}}{\left(d_{21}a_{12}v_{*}(n^{2}/\ell^{2})+a_{12}b_{21}\right)^{2}}>0.
\end{aligned}\end{eqnarray*}

This, together with the fact that
\begin{eqnarray*}
\operatorname{sign}\left\{\left.\frac{d \operatorname{Re}(\lambda(\tau))}{d \tau}\right|_{\tau=\tau_{n,j}}\right\}=\operatorname{sign}\left\{\operatorname{Re}\left(\left.\frac{d\lambda(\tau)}{d \tau}\right|_{\tau=\tau_{n,j}}\right)^{-1}\right\}
\end{eqnarray*}
completes the proof, where $\operatorname{sign}(.)$ represents the sign function.
\end{proof}

\begin{remark}
Similarly, if we suppose that the conditions $(C_{0})$ and $(C_{3})$ hold, and $n_{1}<n<n_{2}$ with $n \in \mathbb{N}_{0}$, then we have
\begin{eqnarray*}
\left.\frac{d \operatorname{Re}(\lambda(\tau))}{d \tau}\right|_{\tau=\tau_{n,j}^{+}}>0.
\end{eqnarray*}
Notice that the transversality condition for $\tau=\tau_{n,j}^{+}$ can be verified by a similar argument in Lemma 4.7, we hence omit here.
\end{remark}

Moreover, according to the above analysis, we have the following results.
\begin{lemma}
If the condition $(C_{0})$ is satisfied, then we have the following conclusions:

(i) if the condition $(C_{1})$ or $(C_{11})$ holds, then the positive constant steady state $E_{*}\left(u_{*},v_{*}\right)$ of system (4.24) is asymptotically stable for all $\tau \geq 0$;

(ii) if the condition $(C_{2})$ holds, and denote $\tau_{*}=\min\left\{\tau_{n,0}: 0 \leq n \leq n_{*},~n \in \mathbb{N}_{0}\right\}$, then the positive constant steady state $E_{*}\left(u_{*},v_{*}\right)$ of system (4.24) is asymptotically stable for $0 \leq \tau<\tau_{*}$ and unstable for $\tau>\tau_{*}$. Furthermore, system (4.24) undergoes Hopf bifurcations at $\tau=\tau_{n,0}$ for $n \in \mathbb{N}_{0}$. If $n=0$, then the bifurcating periodic solutions are all spatially homogeneous, and when $n\geq 1$ and $n\in \mathbb{N}$, these bifurcating periodic solutions are spatially inhomogeneous;

(iii) if the condition $(C_{3})$ holds, and denote $\tau_{*}=\min\left\{\tau_{n,0}^{+}: n_{1}<n<n_{2},~n \in \mathbb{N}_{0}\right\}$, then the positive constant steady state $E_{*}\left(u_{*},v_{*}\right)$ of system (4.24) is asymptotically stable for $0 \leq \tau<\tau_{*}$ and unstable for $\tau>\tau_{*}$. Furthermore, system (4.24) undergoes Hopf bifurcations at $\tau=\tau_{n,0}^{+}$ for $n \in \mathbb{N}_{0}$. If $n=0$, then the bifurcating periodic solutions are all spatially homogeneous, and when $n\geq 1$ and $n\in \mathbb{N}$, these bifurcating periodic solutions are spatially inhomogeneous.
\end{lemma}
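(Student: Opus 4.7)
The plan is to assemble the three conclusions directly from the spectral analysis already carried out in Section~4.2.1, using continuous dependence of the roots of the characteristic equation (4.28) on the delay $\tau$, the stability baseline established at $\tau=0$, and the transversality estimate of Lemma~4.7.

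First I would dispense with the baseline. Under $(C_0)$ we already know $T_n<0$ and $\widetilde J_n(0)>0$ for every $n\in\mathbb{N}_0$, so all roots of (4.30) lie in the open left half‑plane; equivalently every eigenvalue of the linearization at $E_*(u_*,v_*)$ has negative real part when $\tau=0$. Since the roots of (4.28) vary continuously with $\tau$, instability can arise only when a root crosses the imaginary axis, and $\lambda=0$ is excluded because $\Gamma_n(0)=\widetilde J_n(0)>0$. Thus it suffices to track the purely imaginary roots $\pm i\omega_n$, which are governed by equation (4.33) with $P_n>0$ for all $n$.

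For part (i), under $(C_1)$ or $(C_{11})$ I would show that $\widetilde Q_n\ge 0$ for every $n\in\mathbb{N}_0$, hence $Q_n=\Gamma_n(0)\widetilde Q_n\ge 0$; together with $P_n>0$ this forces (4.33) to have no positive real root $\omega_n^2$. Therefore no $\tau$‑value produces a purely imaginary eigenvalue, no crossing can occur, and asymptotic stability of $E_*(u_*,v_*)$ at $\tau=0$ propagates to all $\tau\ge 0$. For parts (ii) and (iii) I would invoke Case~4.5 (resp.\ Case~4.6), which identifies exactly the wave numbers for which $Q_n<0$ and produces the explicit positive frequencies $\omega_n$ (resp.\ $\omega_n^+$) and the delay sequences $\tau_{n,j}$ (resp.\ $\tau_{n,j}^+$) in (4.42). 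Define $\tau_*$ as the smallest element of $\{\tau_{n,0}\}$ (resp.\ $\{\tau_{n,0}^+\}$) over the admissible index range. For $\tau\in[0,\tau_*)$ no root has yet reached the imaginary axis, so all roots still satisfy $\operatorname{Re}\lambda<0$ and $E_*(u_*,v_*)$ is asymptotically stable. At $\tau=\tau_*$ a conjugate pair $\pm i\omega_n$ meets the imaginary axis; by Lemma~4.7 (and Remark~4.8 for part~(iii)) the transversality condition $\left.\frac{d\operatorname{Re}\lambda(\tau)}{d\tau}\right|_{\tau=\tau_{n,0}}>0$ holds, so the pair crosses into the right half‑plane and $E_*(u_*,v_*)$ becomes unstable for $\tau>\tau_*$. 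Together with the simplicity of the crossing pair and non‑resonance at the remaining eigenvalues, the standard Hopf bifurcation theorem yields the claimed Hopf bifurcation at each $\tau=\tau_{n,0}$ (resp.\ $\tau_{n,0}^+$). The spatial character of the bifurcating branch follows from the eigenfunction $\gamma_n(x)$: for $n=0$ the mode is constant in $x$, giving spatially homogeneous periodic solutions, while for $n\ge 1$ the cosine mode produces spatially inhomogeneous ones.

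The main obstacle I anticipate is not the continuation argument itself but the bookkeeping in the $Q_n=\Gamma_n(0)\widetilde Q_n$ factorization: one must confirm that $\Gamma_n(0)>0$ holds uniformly under $(C_0)$ so that the sign of $Q_n$ coincides with that of $\widetilde Q_n$, and that the admissible index sets used to define $\tau_*$ are non‑empty in cases (ii) and (iii). The non‑emptiness in case (ii) is secured because $\widetilde Q_0=a_{11}a_{22}+a_{12}b_{21}<0$ under $(C_2)$, giving at least $n=0$ in the admissible set; in case (iii) the conditions in $(C_3)$ guarantee two real positive roots $\widetilde x_{1,2}$ of (4.37) separated enough to contain an integer $n$ with $n_1<n<n_2$ (which in principle could fail but is assumed implicitly in the formulation). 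All other ingredients—continuity of $\lambda(\tau)$, the crossing direction from Lemma~4.7, and the Hopf theorem—are standard, so the proof reduces to carefully organizing these ingredients across the three sub‑cases.
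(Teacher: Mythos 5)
Your proposal is correct and follows essentially the same route as the paper: the paper gives no separate proof of this lemma beyond the phrase ``according to the above analysis,'' and the ingredients you assemble --- stability of all roots at $\tau=0$ under $(C_{0})$, exclusion of $\lambda=0$, the factorization $Q_{n}=\Gamma_{n}(0)\widetilde{Q}_{n}$ with $\Gamma_{n}(0)>0$, the case analysis of Cases 4.4--4.6 for the existence of purely imaginary roots, the transversality of Lemma 4.7 and Remark 4.8, and the identification of spatial (in)homogeneity via the mode $\gamma_{n}(x)$ --- are exactly the ones the paper relies on.
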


\subsubsection{Direction and stability of the Hopf bifurcation}

In this section, we verify the analytical results given in the previous sections by some numerical simulations and investigate the direction and stability of the Hopf bifurcation. We use the following initial conditions for the system (4.24)
\begin{eqnarray*}
u(x,t)=u_{0}(x),~v(x,t)=v_{0}(x),~t \in\left[-\tau,0\right],
\end{eqnarray*}
and we set the parameters as follows
\begin{eqnarray*}
d_{11}=0.6,~d_{21}=3.6,~d_{22}=0.8,~m=0.5,~\gamma=0.5,~\beta=1,~\ell=2,
\end{eqnarray*}
we can easily obtain that
\begin{eqnarray*}
0<\beta=1<m+1=1.5,~0<m=0.5<1,~a_{11}a_{22}+a_{12}b_{21}=-0.0556<0.
\end{eqnarray*}

Therefore, the conditions $(C_{0})$ and $(C_{2})$ are satisfied under the above parameters settings. In the following, we mainly verify the conclusion in Lemma 4.9 (ii). According to (4.25), (4.26) and (4.27), we have $E_{*}\left(u_{*}, v_{*}\right)=(0.3333,0.3333)$,
\begin{eqnarray*}\begin{aligned}
&a_{11}=-0.1111,~a_{12}=-0.2222,~a_{21}=0,~a_{22}=-0.5, \\
&b_{11}=0,~b_{12}=0,~b_{21}=0.5,~b_{22}=0.
\end{aligned}\end{eqnarray*}

It follows from (4.34) that
\begin{eqnarray*}
P_{n}=0.0625 n^{4}+0.2333 n^{2}+0.2623>0.
\end{eqnarray*}

Notice that $P_{n}>0$ for any $n \in \mathbb{N}_{0}$, which together with (4.39) and Lemma 4.9 (ii), implies that for a fixed $n$, (4.33) has only one positive root for $0\leq n\leq n_{*}$. Furthermore, by combining with (4.38), (4.39), (4.40), (4.41) and (4.42), we have $n_{*}=0$, $\omega_{c}=\omega_{0}=0.1775$ and $\tau_{c}=\tau_{0,0}=10.078$.

Moreover, by Lemma 4.9 (ii), we have the following proposition.

\begin{proposition}
For system (4.24) with the parameters $d_{11}=0.6,~d_{21}=3.6,~d_{22}=0.8,~m=0.5,~\gamma=0.5,~\beta=1,~\ell=2$, the positive constant steady state $E_{*}\left(u_{*},v_{*}\right)$ of system (4.24) is asymptotically stable for $0 \leq \tau<\tau_{0,0}=10.078$ and unstable for $\tau>\tau_{0,0}=10.078$. Furthermore, system (4.24) undergoes a Hopf bifurcation at the positive constant steady state $E_{*}\left(u_{*},v_{*}\right)$ when $\tau=\tau_{0,0}=10.078$.
\end{proposition}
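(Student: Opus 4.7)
The plan is to recognize that Proposition 4.10 is a direct application of Lemma 4.9(ii) to the chosen numerical parameters. Therefore the proof reduces to (a) verifying that the hypotheses $(C_0)$ and $(C_2)$ hold, (b) computing the Jacobian data $a_{ij},b_{ij}$ and $v_*$ at $E_*$, (c) locating $n_*$ via the quadratic $\widetilde{f}(\widetilde{x})$ in (4.37)--(4.40), and (d) using formulas (4.41) and (4.42) to produce the explicit values $\omega_0$ and $\tau_{0,0}$. The transversality required by Lemma 4.9(ii) is furnished for free by Lemma 4.7, so no new analytic work is needed; the proof is entirely a finite list of arithmetic checks.

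First I would record that for $\beta=1$ and $m=0.5$ one has $0<\beta<(m+1)^2/2=1.125$ and $0<m<1$, so $(C_0)$ holds. Then, substituting into (4.25)--(4.27), I would compute $u_*=v_*=1/3$ together with $a_{11}=-1/9$, $a_{12}=-2/9$, $a_{21}=0$, $a_{22}=-1/2$, $b_{21}=1/2$ and $b_{11}=b_{12}=b_{22}=0$, and then evaluate $a_{11}a_{22}+a_{12}b_{21}=1/18-1/9=-1/18<0$, which is $(C_2)$. These same numbers, inserted into (4.34), yield $P_n=0.0625n^4+0.2333n^2+0.2623>0$ for every $n\in\mathbb{N}_0$, so the sign of $Q_n$ alone decides whether (4.33) has a positive root.

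Next I would solve $\widetilde{f}(\widetilde{x})=0$ in (4.37) with the plugged-in coefficients to locate the unique positive root $\widetilde{x}_*\approx 0.236$, obtain $n_0=\ell\sqrt{\widetilde{x}_*}\approx 0.97$, and conclude from (4.40) that $n_*=[n_0]=0$. Hence the only admissible mode in Lemma 4.9(ii) is $n=0$. For $n=0$, I would compute $J_0=a_{11}a_{22}=1/18$, $Q_0=\Gamma_0(0)\widetilde{Q}_0<0$, and use (4.41) to obtain $\omega_0\approx 0.1775$, followed by (4.42) with $j=0$, which since $a_{12}<0$, $T_0<0$ and $J_0-\omega_0^2>0$ places the argument of $\arccos$ in $(-1,0)$, giving $\tau_{0,0}\approx 10.078$. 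Applying Lemma 4.9(ii) together with Lemma 4.7 (transversality at $\tau=\tau_{0,0}$) then yields stability for $0\le\tau<\tau_{0,0}$, instability for $\tau>\tau_{0,0}$, and a Hopf bifurcation at $\tau=\tau_{0,0}$; because $n=0$, the associated eigenfunction $\gamma_0(x)$ is constant in $x$, so the bifurcating periodic solutions are spatially homogeneous.

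The only step that requires real care, rather than mechanical substitution, is the determination of $n_*$: one must confirm that the positive root $\widetilde{x}_*$ of $\widetilde{f}$ satisfies $\ell\sqrt{\widetilde{x}_*}<1$, so that no positive integer $n$ beyond $0$ belongs to the unstable range, and hence the minimum in the definition of $\tau_*$ collapses to $\tau_{0,0}$. Once that is verified, the remainder is the evaluation of $\omega_0$ and $\tau_{0,0}$ from closed-form expressions, and the invocation of Lemmas 4.7 and 4.9(ii) to package the conclusion.
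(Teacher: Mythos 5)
Your proposal is correct and follows essentially the same route as the paper: verify $(C_{0})$ and $(C_{2})$, compute the linearization coefficients at $E_{*}$, check $P_{n}>0$, determine $n_{*}=0$ from the quadratic $\widetilde{f}(\widetilde{x})$, evaluate $\omega_{0}\approx 0.1775$ and $\tau_{0,0}\approx 10.078$ from (4.41)--(4.42), and invoke Lemma 4.9(ii) with transversality from Lemma 4.7. Your arithmetic checks out, and you in fact spell out the determination of $n_{*}$ more explicitly than the paper does.
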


For the parameters $d_{11}=0.6,~d_{21}=3.6,~d_{22}=0.8,~m=0.5,~\gamma=0.5,~\beta=1,~\ell=2$, according to Proposition 4.11, we know that system (4.24) undergoes Hopf bifurcation at $\tau_{0,0}=10.078$. Furthermore, the direction and stability of the Hopf bifurcation can be determined by calculating $K_{1}K_{2}$ and $K_{2}$ using the procedures developed in Section 2. After a direct calculation using MATLAB software, we obtain
\begin{eqnarray*}
K_{1}=0.0366>0,~K_{2}=-14.9167<0,~K_{1}K_{2}=-0.5454<0,
\end{eqnarray*}
which implies that the Hopf bifurcation at $\tau_{0,0}=10.078$ is supercritical and stable.

When $\tau=6<\tau_{0,0}=10.078$, Fig.6 (a)-(d) illustrate the evolution of the solution of system (4.24) starting from the initial values $u_{0}(x)=0.3333+0.01,~v_{0}(x)=0.3333+0.01$, finally converging to the positive constant steady state $E_{*}\left(u_{*},v_{*}\right)$. When $\tau=6<\tau_{0,0}=10.078$, Fig.7 shows the behavior and phase portrait of system (4.24). Furthermore, when $\tau=13>\tau_{0,0}=10.078$, Fig.8 (a)-(d) illustrate the existence of the spatially homogeneous periodic solution with the initial values $u_{0}(x)=0.3333-0.01,~v_{0}(x)=0.3333+0.01$. When $\tau=13>\tau_{0,0}=10.078$, Fig.9 shows the behavior and phase portrait of system (4.24).

\begin{figure}[!htbp]
\centering
\includegraphics[width=2in]{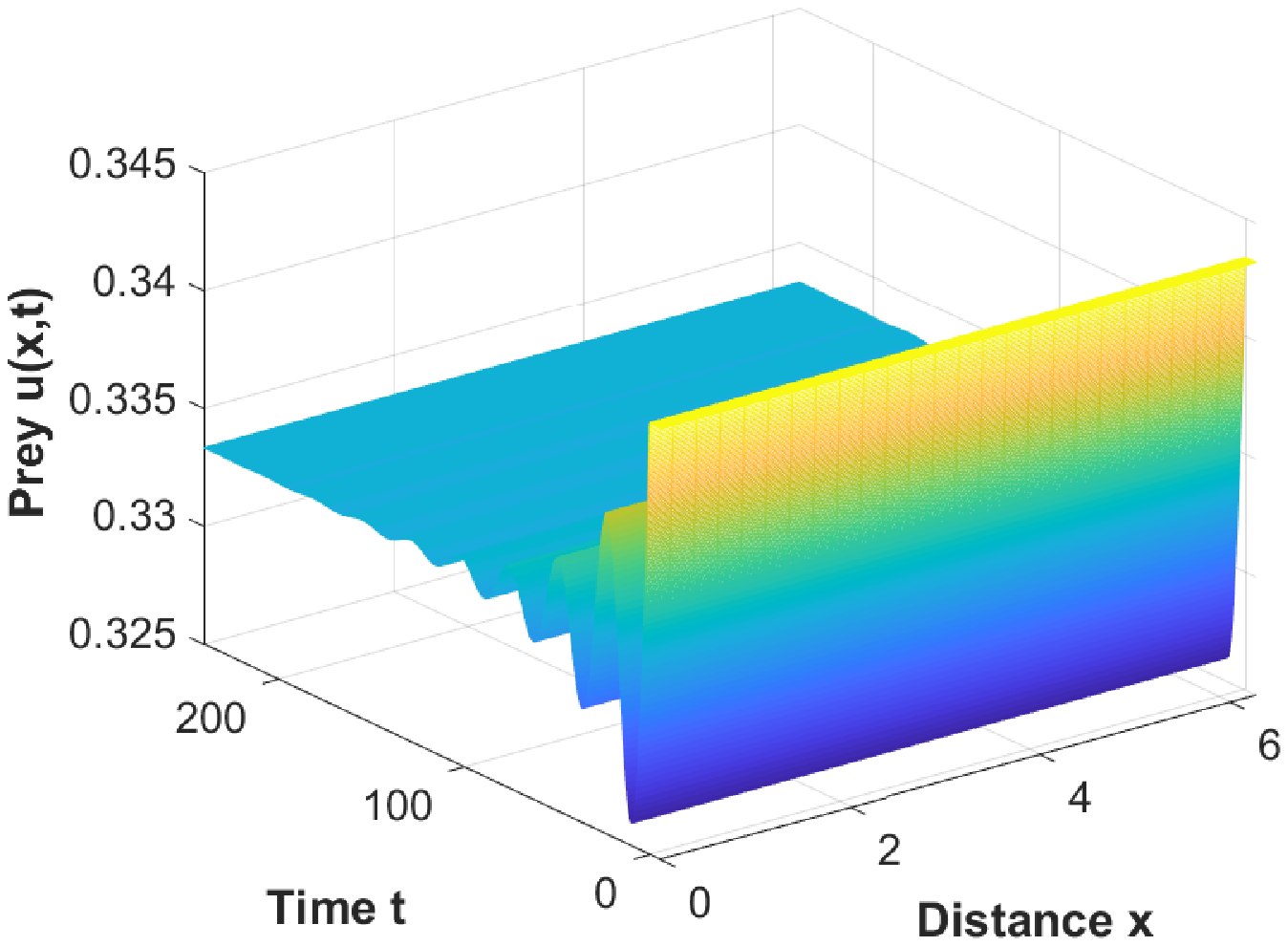}
\includegraphics[width=2in]{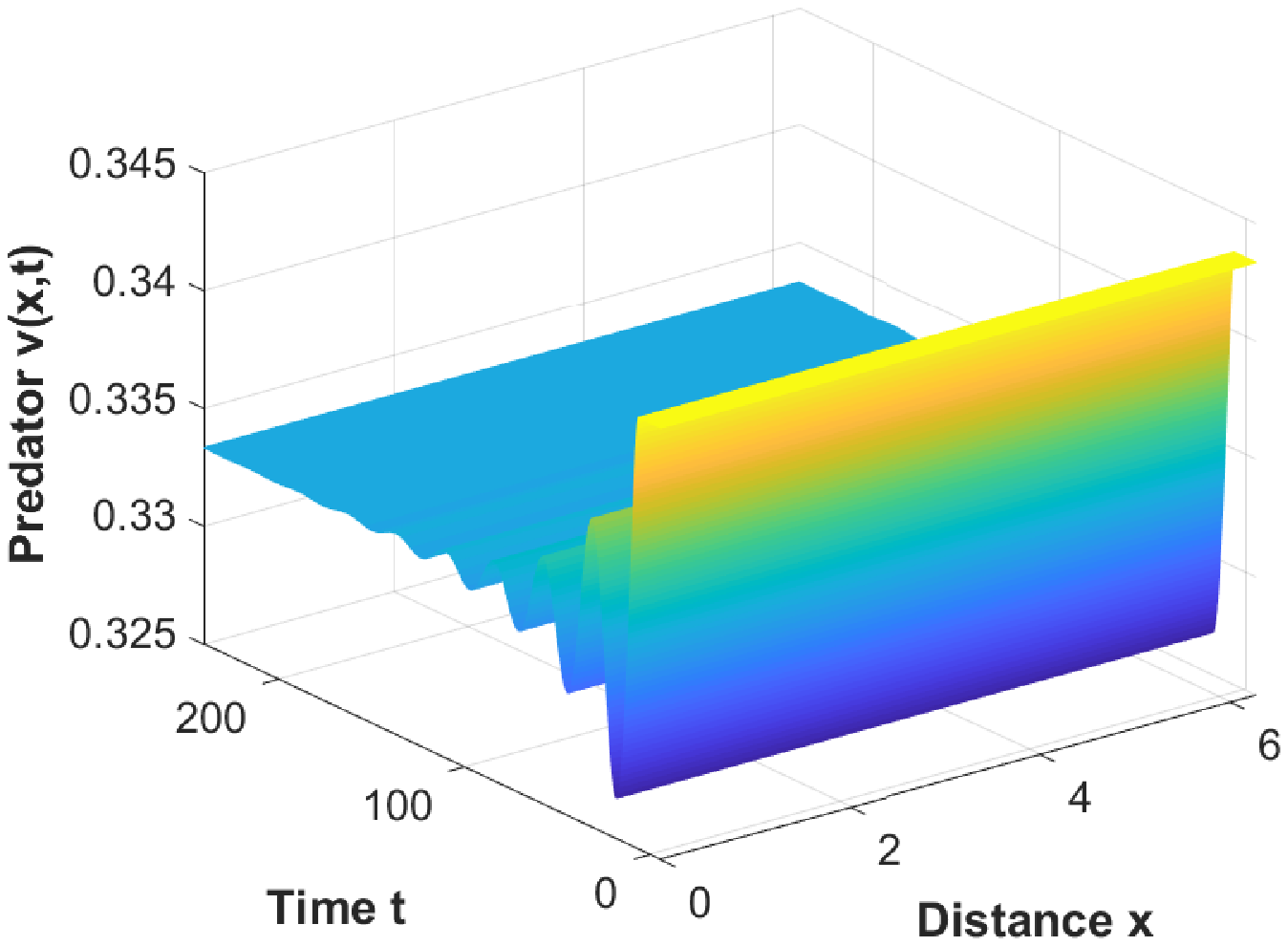} \\
\textbf{(a)} \hspace{4.5cm} \textbf{(b)} \\
\includegraphics[width=2in]{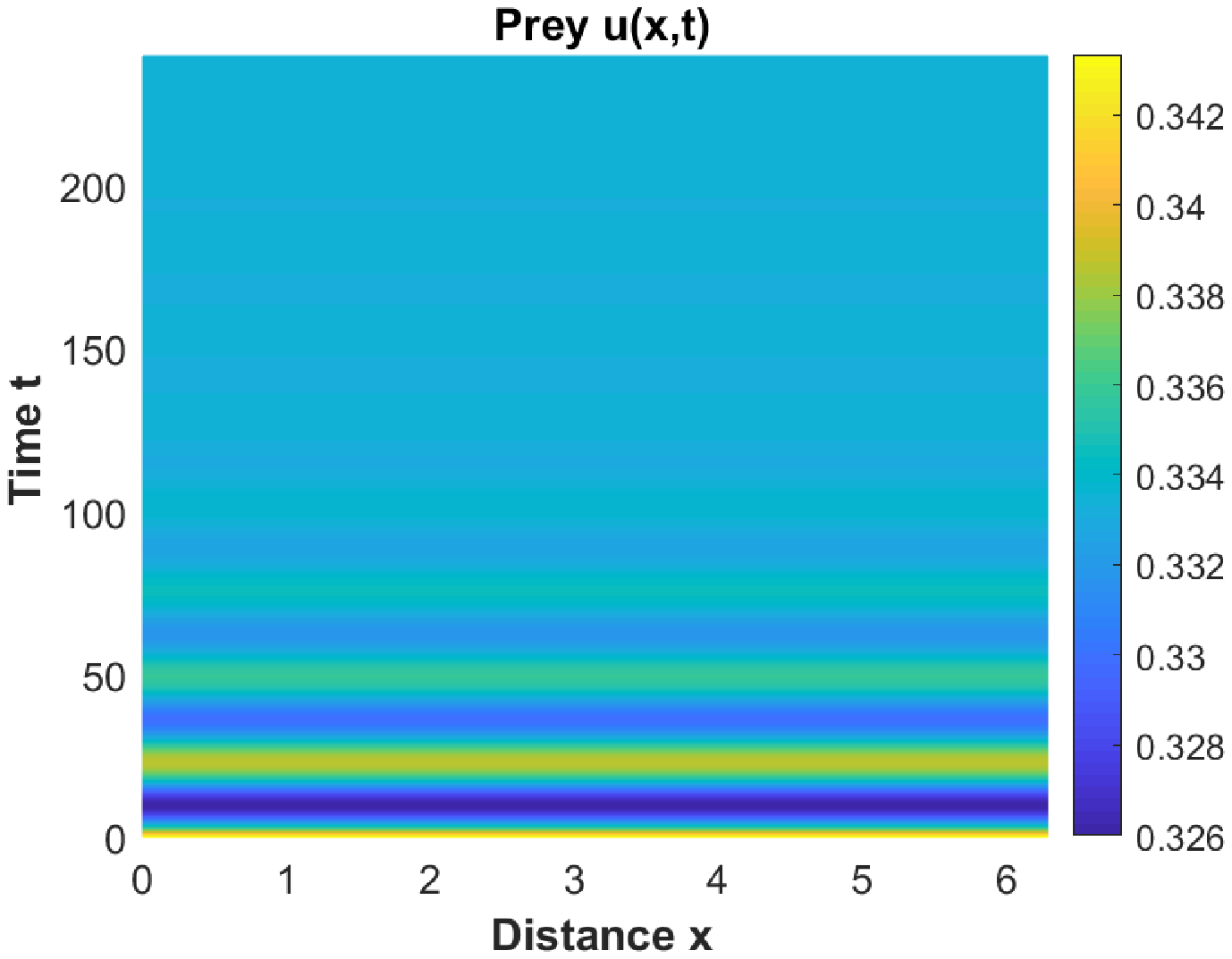}
\includegraphics[width=2in]{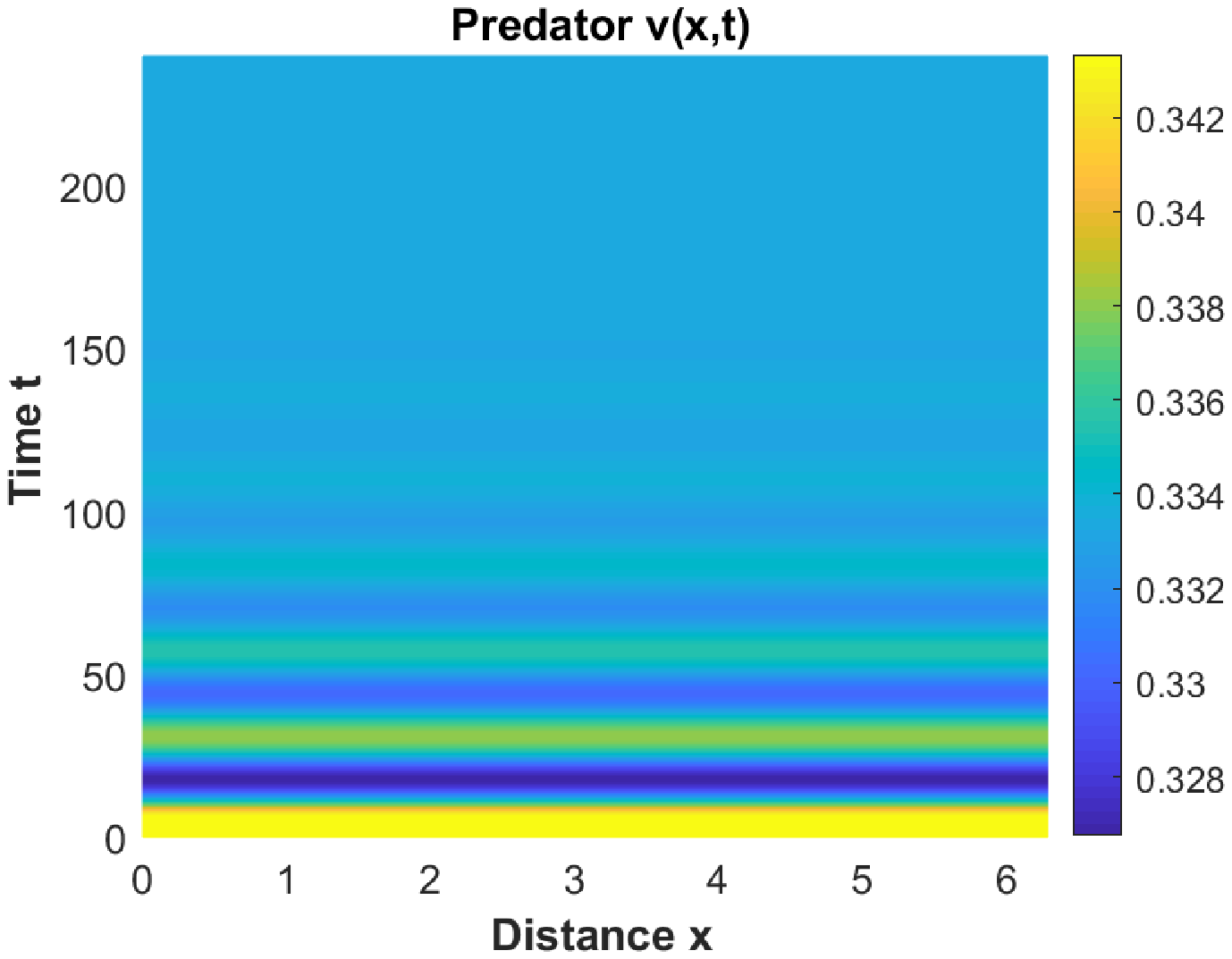} \\
\textbf{(c)} \hspace{4.5cm} \textbf{(d)} \\
\caption{For the parameters $d_{11}=0.6,~d_{22}=0.8,~m=0.5,~\gamma=0.5,~\beta=1,~\ell=2$ and $\tau=6<\tau_{0,0}=10.078$, the positive constant steady state $E_{*}\left(u_{*},v_{*}\right)=(0.3333,0.3333)$ is locally asymptotically stable. (a) and (b) are the evolution processes of the solutions $u(x,t)$ and $v(x,t)$ of system (4.24), respectively. (c) and (d) are spatio-temporal diagrams of the solutions $u(x,t)$ and $v(x,t)$ of system (4.24), respectively. The initial values are $u_{0}(x)=0.3333+0.01,~v_{0}(x)=0.3333+0.01$.}
\label{fig:6}
\end{figure}

\begin{figure}[!htbp]
\centering
\includegraphics[width=2in]{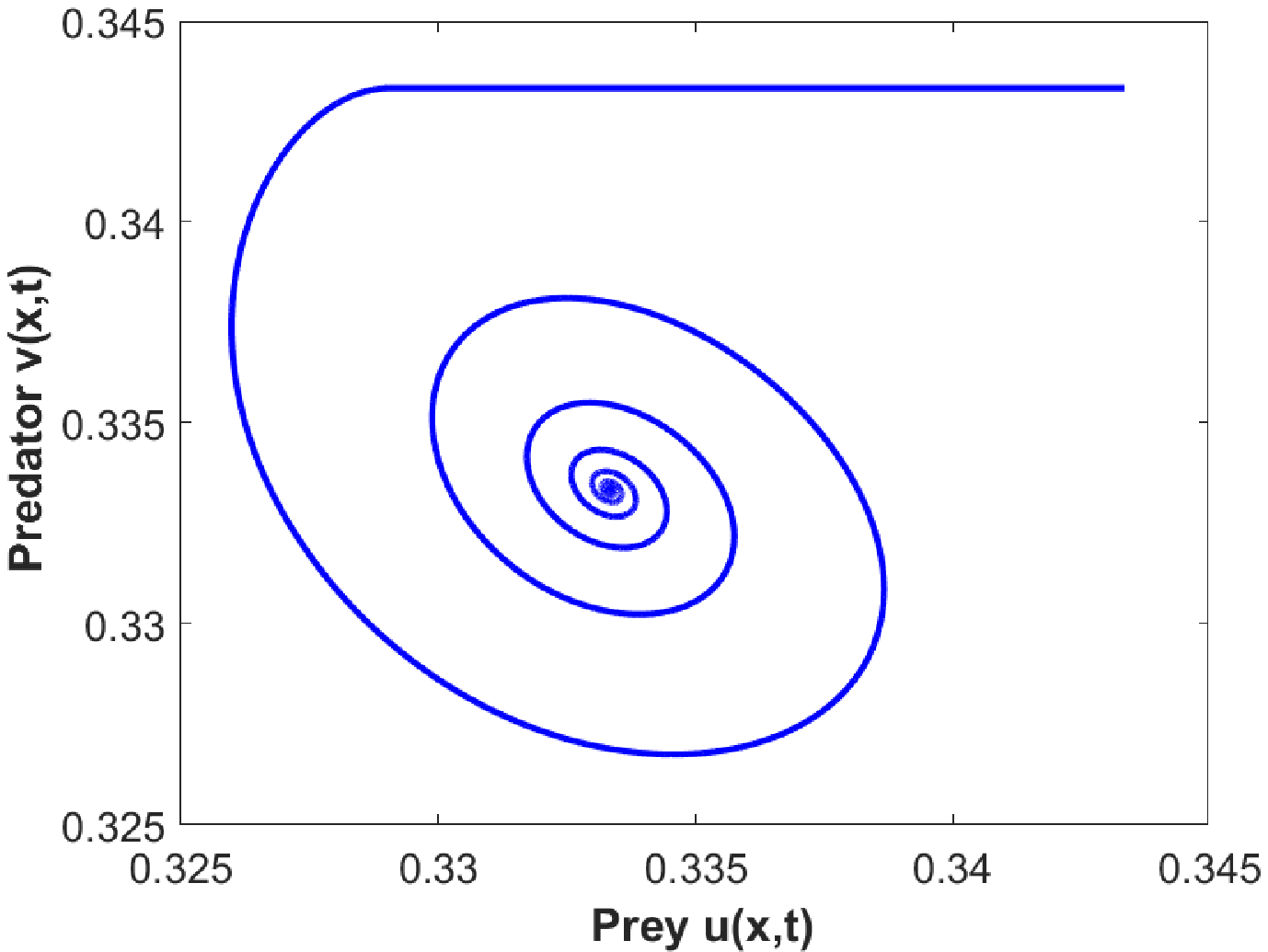} \\
\caption{For the parameters $d_{11}=0.6,~d_{22}=0.8,~m=0.5,~\gamma=0.5,~\beta=1,~\ell=2$ and $\tau=6<\tau_{0,0}=10.078$, the behavior and phase portrait of system (4.24) is shown. The initial values are $u_{0}(x)=0.3333+0.01,~v_{0}(x)=0.3333+0.01$.}
\label{fig:7}
\end{figure}

\begin{figure}[!htbp]
\centering
\includegraphics[width=2in]{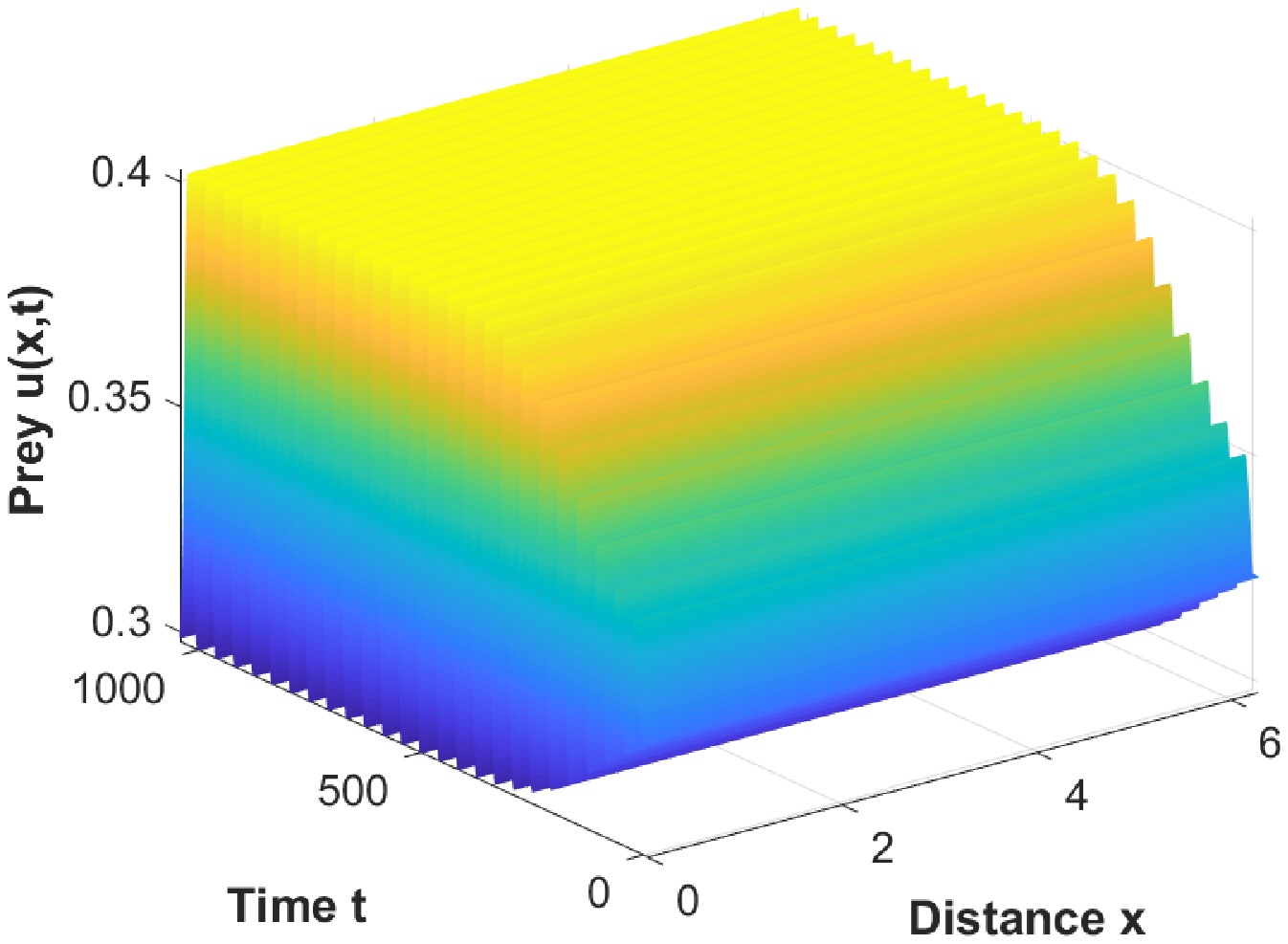}
\includegraphics[width=2in]{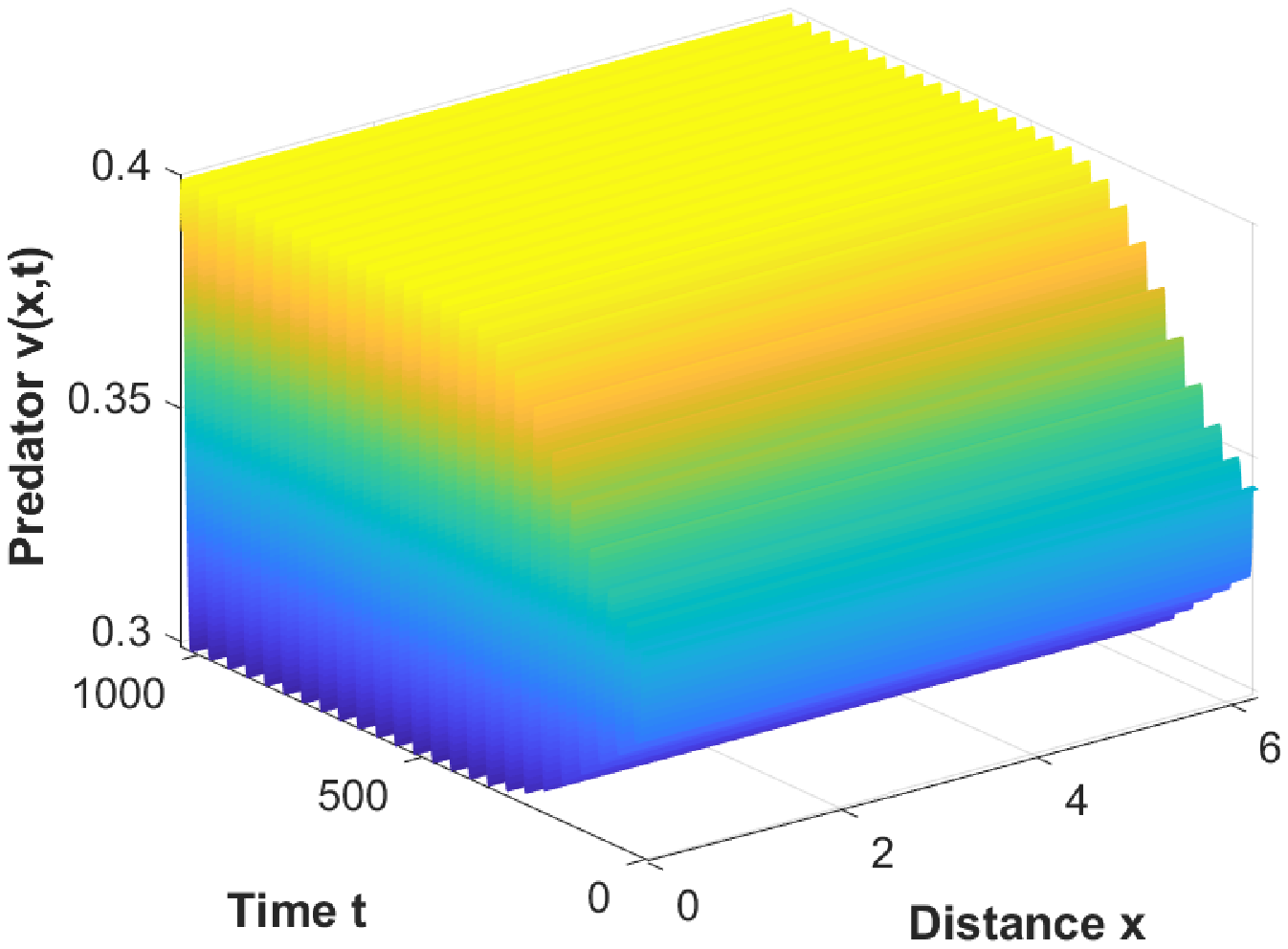} \\
\textbf{(a)} \hspace{4.5cm} \textbf{(b)} \\
\includegraphics[width=2in]{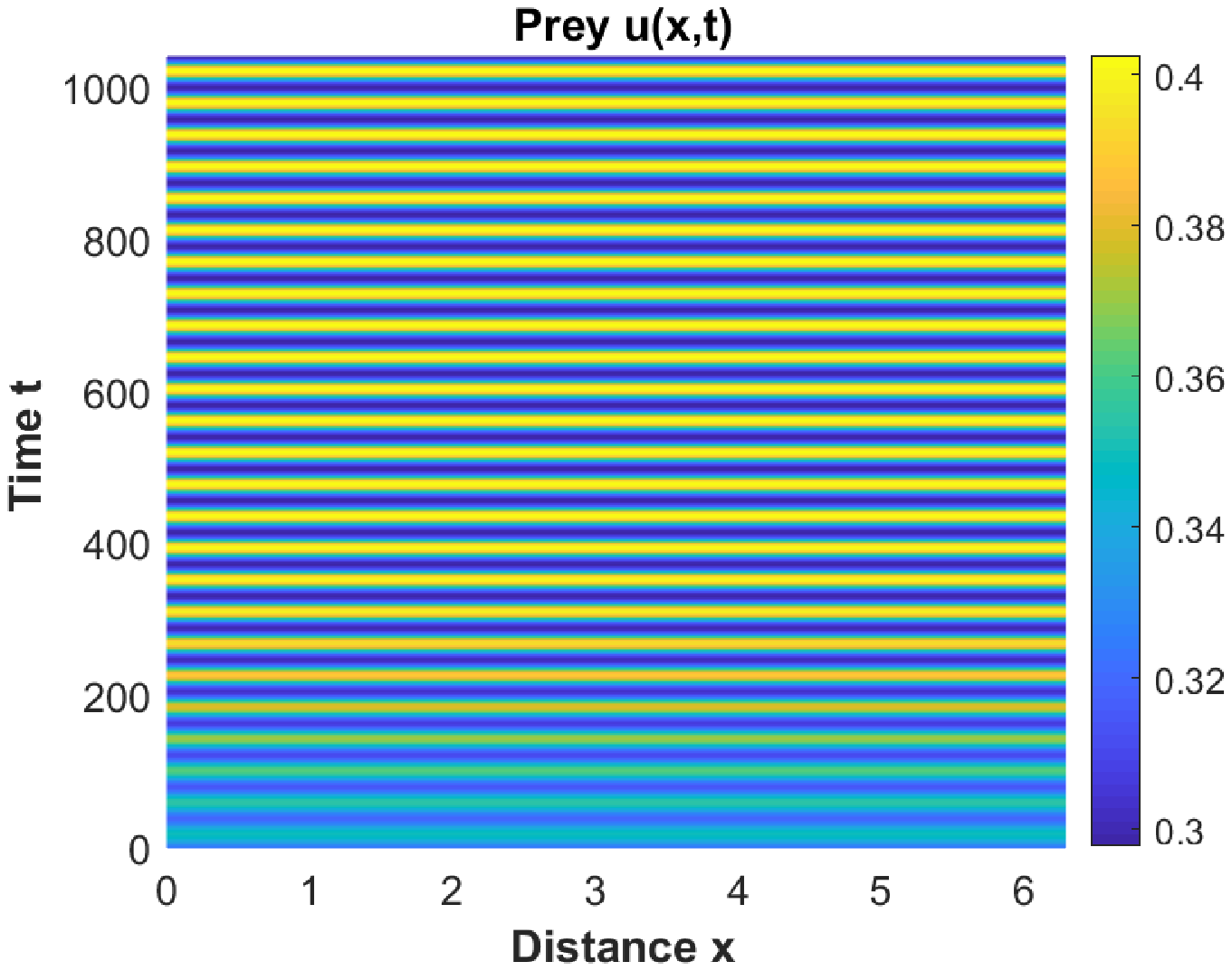}
\includegraphics[width=2in]{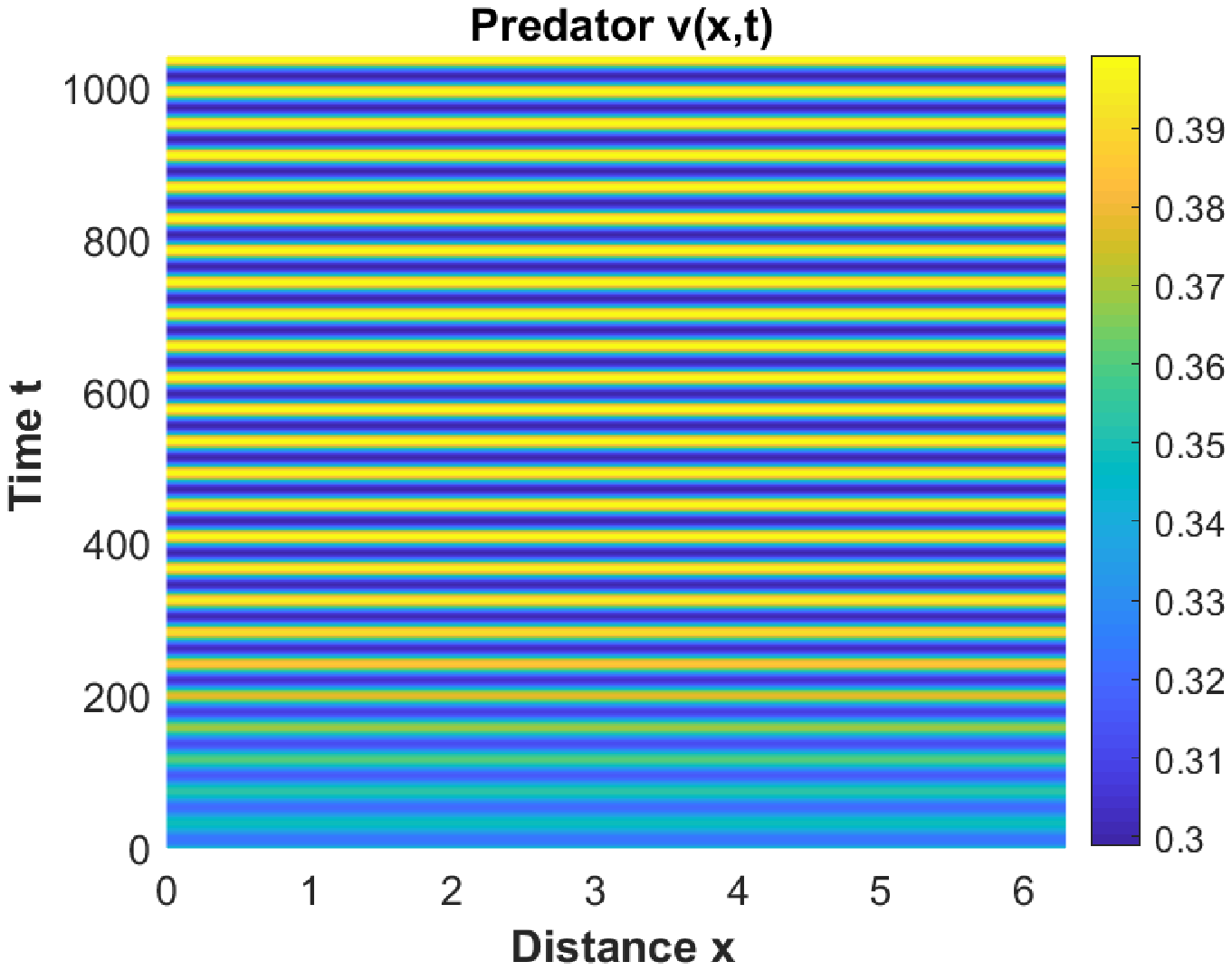} \\
\textbf{(c)} \hspace{4.5cm} \textbf{(d)} \\
\caption{For the parameters $d_{11}=0.6,~d_{22}=0.8,~m=0.5,~\gamma=0.5,~\beta=1,~\ell=2$ and $\tau=13>\tau_{0,0}=10.078$, there exists a stable spatially homogeneous periodic solution. (a) and (b) are the evolution processes of the solutions $u(x,t)$ and $v(x,t)$ of system (4.24), respectively. (c) and (d) are spatio-temporal diagrams of the solutions $u(x,t)$ and $v(x,t)$ of system (4.24), respectively. The initial values are $u_{0}(x)=0.3333-0.01,~v_{0}(x)=0.3333+0.01$.}
\label{fig:8}
\end{figure}

\begin{figure}[!htbp]
\centering
\includegraphics[width=2in]{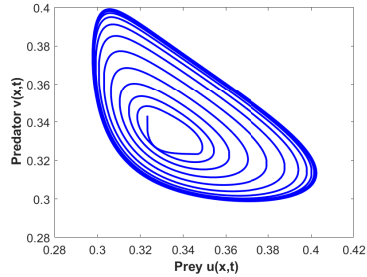} \\
\caption{For the parameters $d_{11}=0.6,~d_{22}=0.8,~m=0.5,~\gamma=0.5,~\beta=1,~\ell=2$ and $\tau=13>\tau_{0,0}=10.078$, the behavior and phase portrait of system (4.24) is shown. The initial values are $u_{0}(x)=0.3333-0.01,~v_{0}(x)=0.3333+0.01$.}
\label{fig:9}
\end{figure}

\section{Conclusion and discussion}
\label{sec:5}

In this paper, we have developed an algorithm for calculating the normal form of Hopf bifurcation in a diffusive system with memory and general delays. Since apart from the memory delay appears in the diffusion term, the general delay also occurs in the reaction term, the traditional algorithm for calculating the normal form of Hopf bifurcation in the memory-based system which without the general delays is not suitable for this system. To solve this problem, we derive an algorithm for calculating the normal form of Hopf bifurcation in a diffusive system with memory and general delays, which can be seen a generalization of the existing algorithm for the reaction-diffusion system where only the memory delay appears in the diffusion term. In order to show the effectiveness of our developed algorithm, we consider a diffusive predator-prey model with ratio-dependent Holling type-\uppercase\expandafter{\romannumeral3} functional response, which includes with memory and gestation delays. The memory and gestation delays-induced spatially homogeneous Hopf bifurcation is observed by theoretical analysis and numerical simulation.

In this paper, we assume that the memory delay and the general delay are the same. It is worth mentioning that when the memory delay and the general delay are different, i.e.,
\begin{eqnarray*}\left\{\begin{aligned}
\frac{\partial u(x,t)}{\partial t}&=d_{11} \Delta u(x,t)+f\left(u(x,t), v(x,t), u(x,t-\sigma),v(x,t-\sigma)\right), \\
\frac{\partial v(x,t)}{\partial t}&=d_{22} \Delta v(x,t)-d_{21}\left(v(x,t) u_{x}(x,t-\tau)\right)_{x}+g\left(u(x,t), v(x,t), u(x, t-\sigma),v(x, t-\sigma)\right),
\end{aligned}\right.\end{eqnarray*}
which needs further research, where $\sigma>0$ is the general delay, and $\tau=\sigma$ or $\tau \neq \sigma$.

\section*{Acknowledgments}

The author is grateful to the anonymous referees for their useful suggestions which improve the contents of this article.

\section*{Declarations}
\par\noindent This research did not involve human participants and animals.
\par\noindent Funding: This research did not receive any specific grant from funding agencies in the public, commercial, or not-for-profit sectors.
\par\noindent Conflicts of interest: The author declares that there is not conflict of interest, whether financial or non-financial.
\par\noindent Availability of data and material: This research didn't involve the private data, and the involving data and material are all available.
\par\noindent Code availability: The numerical simulations in this paper are carried by using the MATLAB software.
\par\noindent Authors' contributions: This manuscript is investigated and written by Yehu Lv.

\section*{Appendix A}
\setcounter{equation}{0}
\renewcommand\theequation{A.\arabic{equation}}

\begin{remark}
Assume that at $\tau=\tau_{c}$, (4.6) has a pair of purely imaginary roots $\pm i \omega_{n_{c}}$ with $\omega_{n_{c}}>0$ for $n=n_{c} \in \mathbb{N}$ and all other eigenvalues have negative real part. Let $\lambda(\tau)=\alpha_{1}(\tau) \pm i \alpha_{2}(\tau)$ be a pair of roots of (4.6) near $\tau=\tau_{c}$ satisfying $\alpha_{1}(\tau_{c})=0$ and $\alpha_{2}(\tau_{c})=\omega_{n_{c}}$. In addition, the corresponding transversality condition holds.
\end{remark}

The normal form of Hopf bifurcation for the system (4.2) can be calculated by using the developed algorithm in \cite{lv30}. Here, we give the detail calculation procedures of $B_{1}, B_{21}, B_{22}, B_{23}$ steps by steps.
\begin{enumerate}[{\bf{Step 1:}}]
\item
\begin{eqnarray*}
B_{1}=2\psi^{T}(0)\left(A_{1}\phi(0)-\frac{n_{c}^{2}}{\ell^{2}}\left(D_{1}\phi(0)+D_{2}\phi(-1)\right)\right)
\end{eqnarray*}
with
\begin{eqnarray*}
D_{1}=\left(\begin{array}{cc}
d_{11} & 0 \\
0 & d_{22}
\end{array}\right),~D_{2}=\left(\begin{array}{cc}
0 & 0 \\
-d_{21} v_{*} & 0
\end{array}\right),~A_{1}=\left(\begin{array}{cc}
\frac{2 \beta}{(m+1)^{2}}-1 & \frac{\beta(m-1)}{(m+1)^{2}} \\
\gamma & -\gamma
\end{array}\right).
\end{eqnarray*}
Here,
\begin{eqnarray*}
\phi=\left(\begin{array}{c}
1 \\
\frac{i\omega_{n_{c}}+\left(n_{c}/\ell\right)^{2}d_{11}-a_{11}}{a_{12}}
\end{array}\right),~\psi=\eta\left(\begin{array}{c}
1 \\
\frac{a_{12}}{i\omega_{n c}+\left(n_{c}/\ell\right)^{2}d_{22}-a_{22}}
\end{array}\right)
\end{eqnarray*}
with
\begin{eqnarray*}
\eta=\frac{i\omega_{n_{c}}+\left(n_{c}/\ell\right)^{2} d_{22}-a_{22}}{2i\omega_{n_{c}}+\left(n_{c}/\ell\right)^{2}d_{11}-a_{11}+\left(n_{c}/\ell\right)^{2}d_{22}-a_{22}+\tau_{c}a_{12}d_{21} v_{*}\left(n_{c}/\ell\right)^{2}e^{-i\omega_{c}}}.
\end{eqnarray*}
\end{enumerate}

\begin{enumerate}[{\bf{Step 2:}}]
\item
\begin{eqnarray*}
B_{21}=\frac{3}{2\ell\pi}\psi^{T}A_{21}
\end{eqnarray*}
with
\begin{eqnarray*}\begin{aligned}
A_{21}&=3f_{30}\phi_{1}^{2}(0)\overline{\phi_{1}}(0)+3f_{03}\phi_{2}^{2}(0)\overline{\phi_{2}}(0)+3f_{21}(\phi_{1}^{2}(0)\overline{\phi_{2}}(0)+2\phi_{1}(0)\overline{\phi_{1}}(0)\phi_{2}(0)) \\
&+3f_{12}(2\phi_{1}(0)\phi_{2}(0)\overline{\phi_{2}}(0)+\overline{\phi_{1}}(0)\phi_{2}^{2}(0)).
\end{aligned}\end{eqnarray*}
Here,
\begin{eqnarray*}\begin{aligned}
f^{(1)}_{03}&=6\tau_{c}\beta m u_{*}^{2}(u_{*}^{2}+mv_{*}^{2})^{-2}-48\tau_{c}\beta m^{2}u_{*}^{2}v_{*}^{2}(u_{*}^{2}+mv_{*}^{2})^{-3}+48\tau_{c}\beta m^{3}u_{*}^{2}v_{*}^{4}(u_{*}^{2}+mv_{*}^{2})^{-4}, \\
f^{(2)}_{03}&=0, \\
f^{(1)}_{12}&=12\tau_{c}\beta m u_{*}v_{*}(u_{*}^{2}+mv_{*}^{2})^{-2}-16\tau_{c}\beta m^{2}u_{*}v_{*}^{3}(u_{*}^{2}+mv_{*}^{2})^{-3},~f^{(2)}_{12}=2\tau_{c}\gamma u_{*}^{-2}, \\
f^{(1)}_{21}&=-2\tau_{c}\beta(u_{*}^{2}+mv_{*}^{2})^{-1}+4\tau_{c}\beta mv_{*}^{2}(u_{*}^{2}+mv_{*}^{2})^{-2}+10\tau_{c}\beta u_{*}^{2}(u_{*}^{2}+mv_{*}^{2})^{-2} \\
&-40\tau_{c}\beta m u_{*}^{2}v_{*}^{2}(u_{*}^{2}+mv_{*}^{2})^{-3}-8\tau_{c}\beta u_{*}^{4}(u_{*}^{2}+mv_{*}^{2})^{-3}+48\tau_{c}\beta m u_{*}^{4}v_{*}^{2}(u_{*}^{2}+mv_{*}^{2})^{-4}, \\
f^{(2)}_{21}&=-4\tau_{c}\gamma v_{*}u_{*}^{-3}, \\
f^{(1)}_{30}&=24\tau_{c}\beta u_{*}v_{*}(u_{*}^{2}+mv_{*}^{2})^{-2}-72\tau_{c}\beta u_{*}^{3}v_{*}(u_{*}^{2}+mv_{*}^{2})^{-3}+48\tau_{c}\beta u_{*}^{5}v_{*}(u_{*}^{2}+mv_{*}^{2})^{-4}, \\
f^{(2)}_{30}&=6\tau_{c}\gamma v_{*}^{2}u_{*}^{-4}.
\end{aligned}\end{eqnarray*}
\end{enumerate}

\begin{enumerate}[{\bf{Step 3:}}]
\item
\begin{eqnarray*}\begin{aligned}
B_{22}&=\frac{1}{\sqrt{\ell\pi}}\psi^{T}\left(S_{2}\left(\phi(\theta),h_{0,11}(\theta)\right)+S_{2}\left(\overline{\phi}(\theta), h_{0,20}(\theta)\right)\right) \\
&+\frac{1}{\sqrt{2\ell\pi}} \psi^{T}\left(S_{2}\left(\phi(\theta),h_{2n_{c},11}(\theta)\right)+S_{2}\left(\overline{\phi}(\theta),h_{2n_{c},20}(\theta)\right)\right)
\end{aligned}\end{eqnarray*}
with
\begin{eqnarray*}\begin{aligned}
S_{2}\left(\phi(\theta),h_{0,11}(\theta)\right)&=2f_{20}\phi_{1}(0)h^{(1)}_{0,11}(0)+2f_{02}\phi_{2}(0)h^{(2)}_{0,11}(0) \\
&+2f_{11}\left(\phi_{1}(0)h^{(2)}_{0,11}(0)+\phi_{2}(0)h^{(1)}_{0,11}(0)\right), \\
S_{2}\left(\overline{\phi}(\theta), h_{0,20}(\theta)\right)&=2f_{20}\overline{\phi}_{1}(0)h^{(1)}_{0,20}(0)+2f_{02}\overline{\phi}_{2}(0)h^{(2)}_{0,20}(0) \\
&+2f_{11}\left(\overline{\phi}_{1}(0)h^{(2)}_{0,20}(0)+\overline{\phi}_{2}(0)h^{(1)}_{0,20}(0)\right), \\
S_{2}\left(\phi(\theta), h_{2n_{c},11}(\theta)\right)&=2f_{20}\phi_{1}(0)h^{(1)}_{2n_{c},11}(0)+2f_{02}\phi_{2}(0)h^{(2)}_{2n_{c},11}(0) \\
&+2f_{11}\left(\phi_{1}(0)h^{(2)}_{2n_{c},11}(0)+\phi_{2}(0)h^{(1)}_{2n_{c},11}(0)\right), \\
S_{2}\left(\overline{\phi}(\theta), h_{2n_{c},20}(\theta)\right)&=2f_{20}\overline{\phi}_{1}(0)h^{(1)}_{2n_{c},20}(0)+2f_{02}\overline{\phi}_{2}(0)h^{(2)}_{2n_{c},20}(0) \\
&+2f_{11}\left(\overline{\phi}_{1}(0)h^{(2)}_{2n_{c},20}(0)+\overline{\phi}_{2}(0)h^{(1)}_{2n_{c},20}(0)\right).
\end{aligned}\end{eqnarray*}
Here,
\begin{eqnarray*}\begin{aligned}
f^{(1)}_{02}&=6\tau_{c}\beta m u_{*}^{2}v_{*}(u_{*}^{2}+mv_{*}^{2})^{-2}-8\tau_{c}\beta m^{2}u_{*}^{2}v_{*}^{3}(u_{*}^{2}+mv_{*}^{2})^{-3},~f^{(2)}_{02}=-2\tau_{c}\gamma u_{*}^{-1}, \\
f^{(1)}_{11}&=-2\tau_{c}\beta u_{*}(u_{*}^{2}+mv_{*}^{2})^{-1}+4\tau_{c}\beta m u_{*}v_{*}^{2}(u_{*}^{2}+mv_{*}^{2})^{-2},~f^{(2)}_{11}=2\tau_{c}\gamma u_{*}^{-2}v_{*}, \\
f^{(1)}_{20}&=-2\tau_{c}-2\tau_{c}\beta v_{*}(u_{*}^{2}+mv_{*}^{2})^{-1}+10\tau_{c}\beta u_{*}^{2}v_{*}(u_{*}^{2}+mv_{*}^{2})^{-2}-8\tau_{c}\beta u_{*}^{4}v_{*}(u_{*}^{2}+mv_{*}^{2})^{-3}, \\
f^{(2)}_{20}&=-2\tau_{c}\gamma u_{*}^{-3}v_{*}^{2}.
\end{aligned}\end{eqnarray*}
Furthermore, we have
\begin{eqnarray*}
\left\{\begin{aligned}
h_{0,20}(\theta)&=\frac{1}{\sqrt{\ell\pi}}\left(\widetilde{M}_{0}\left(2i\omega_{c}\right)\right)^{-1}A_{20}e^{2i\omega_{c}\theta}, \\
h_{0,11}(\theta)&=\frac{1}{\sqrt{\ell\pi}}\left(\widetilde{M}_{0}(0)\right)^{-1}A_{11}
\end{aligned}\right.
\end{eqnarray*}
and
\begin{eqnarray*}
\left\{\begin{aligned}
h_{2n_{c},20}(\theta)&=\frac{1}{\sqrt{2\ell\pi}}\left(\widetilde{M}_{2n_{c}}\left(2i\omega_{c}\right)\right)^{-1}\widetilde{A}_{20}e^{2i\omega_{c}\theta} \\
h_{2n_{c},11}(\theta)&=\frac{1}{\sqrt{2\ell\pi}}\left(\widetilde{M}_{2n_{c}}(0)\right)^{-1}\widetilde{A}_{11}
\end{aligned}\right.
\end{eqnarray*}
with
\begin{eqnarray*}
\widetilde{M}_{n}(\lambda)=\lambda I_{2}+\tau_{c}(n/\ell)^{2}D_{1}+\tau_{c}(n/\ell)^{2}e^{-\lambda}D_{2}-\tau_{c}A_{1}.
\end{eqnarray*}
Here,
\begin{eqnarray*}\begin{aligned}
A_{20}&=f_{20}\phi_{1}^{2}(0)+f_{02}\phi_{2}^{2}(0)+2f_{11}\phi_{1}(0)\phi_{2}(0), \\
A_{11}&=2f_{20}\phi_{1}(0)\overline{\phi_{1}}(0)+2f_{02}\phi_{2}(0)\overline{\phi_{2}}(0)+2f_{11}(\phi_{1}(0)\overline{\phi_{2}}(0)+\overline{\phi_{1}}(0)\phi_{2}(0))
\end{aligned}\end{eqnarray*}
and
\begin{eqnarray*}
\left\{\begin{aligned}
\widetilde{A}_{20}&=A_{20}-2\left(n_{c}/\ell\right)^{2}A_{20}^{d}, \\
\widetilde{A}_{11}&=A_{11}-2\left(n_{c}/\ell\right)^{2}A_{11}^{d}
\end{aligned}\right.
\end{eqnarray*}
with
\begin{eqnarray*}\begin{aligned}
&\left\{\begin{array}{l}
A_{20}^{d}=-2d_{21}\tau_{c}\left(\begin{array}{c}
0 \\
\phi_{1}(-1)\phi_{2}(0)
\end{array}\right)=\overline{A_{02}^{d}}, \\
A_{11}^{d}=-2 d_{21}\tau_{c}\left(\begin{array}{c}
0 \\
2 \operatorname{Re}\left\{\phi_{1}(-1)\overline{\phi_{2}}(0)\right\}
\end{array}\right).
\end{array}\right.
\end{aligned}\end{eqnarray*}
\end{enumerate}

\begin{enumerate}[{\bf{Step 4:}}]
\item
\begin{eqnarray*}\begin{aligned}
B_{23}=&-\frac{1}{\sqrt{\ell\pi}}\left(n_{c}/\ell\right)^{2} \psi^{T}\left(S_{2}^{(d,1)}\left(\phi(\theta),h_{0,11}(\theta)\right)+S_{2}^{(d,1)}\left(\overline{\phi}(\theta),h_{0,20}(\theta)\right)\right) \\
&+\frac{1}{\sqrt{2\ell\pi}}\psi^{T}\sum_{j=1,2,3} b_{2n_{c}}^{(j)}\left(S_{2}^{(d,j)}\left(\phi(\theta),h_{2n_{c},11}(\theta)\right)+S_{2}^{(d,j)}\left(\overline{\phi}(\theta),h_{2n_{c},20}(\theta)\right)\right)
\end{aligned}\end{eqnarray*}
with
\begin{eqnarray*}
b_{2n_{c}}^{(1)}=-\frac{n_{c}^{2}}{\ell^{2}},~b_{2n_{c}}^{(2)}=\frac{2n_{c}^{2}}{\ell^{2}},~b_{2n_{c}}^{(3)}=-\frac{(2n_{c})^{2}}{\ell^{2}}
\end{eqnarray*}
and
\begin{eqnarray*}
\left\{\begin{aligned}
&S_{2}^{(d,1)}\left(\phi(\theta),h_{0,11}(\theta)\right)=-2 d_{21}\tau_{c}\left(\begin{array}{c}
0 \\
\phi_{1}(-1)h^{(2)}_{0,11}(0)
\end{array}\right), \\
&S_{2}^{(d,1)}\left(\overline{\phi}(\theta),h_{0,20}(\theta)\right)=-2d_{21}\tau_{c}\left(\begin{array}{c}
0 \\
\overline{\phi}_{1}(-1)h^{(2)}_{0,20}(0)
\end{array}\right), \\
&S_{2}^{(d,1)}\left(\phi(\theta),h_{2n_{c},11}(\theta)\right)=-2d_{21}\tau_{c}\left(\begin{array}{c}
0 \\
\phi_{1}(-1)h^{(2)}_{2n_{c},11}(0)
\end{array}\right), \\
&S_{2}^{(d,2)}(\phi(\theta),h_{2n_{c},11}(\theta))=-2d_{21}\tau_{c}\left(\begin{array}{c}
0 \\
\phi_{1}(-1) h^{(2)}_{2n_{c},11}(0)+\phi_{2}(0)h^{(1)}_{2n_{c},11}(-1)
\end{array}\right), \\
&S_{2}^{(d,3)}(\phi(\theta),h_{2n_{c},11}(\theta))=-2d_{21}\tau_{c}\left(\begin{array}{c}
0 \\
\phi_{2}(0) h^{(1)}_{2n_{c},11}(-1)
\end{array}\right), \\
&S_{2}^{(d,1)}\left(\overline{\phi}(\theta),h_{2n_{c},20}(\theta)\right)=-2d_{21}\tau_{c}\left(\begin{array}{c}
0 \\
\overline{\phi}_{1}(-1) h^{(2)}_{2n_{c},20}(0)
\end{array}\right), \\
&S_{2}^{(d,2)}(\overline{\phi}(\theta),h_{2n_{c},20}(\theta))=-2d_{21}\tau_{c}\left(\begin{array}{c}
0 \\
\overline{\phi}_{1}(-1)h^{(2)}_{2n_{c},20}(0)+\overline{\phi}_{2}(0)h^{(1)}_{2n_{c},20}(-1)
\end{array}\right), \\
&S_{2}^{(d,3)}(\overline{\phi}(\theta),h_{2n_{c},20}(\theta))=-2d_{21}\tau_{c}\left(\begin{array}{c}
0 \\
\overline{\phi}_{2}(0)h^{(1)}_{2n_{c},20}(-1)
\end{array}\right).
\end{aligned}\right.
\end{eqnarray*}
\end{enumerate}

\section*{References}

\end{document}